\newcommand{\Tors}{\operatorname{Tors}}
\newcommand{\Bl}{\operatorname{Bl}}
\newcommand{\grmod}{\operatorname{grmod}}
\newcommand{\ann}{\operatorname{ann}}
\newcommand{\rep}{\operatorname{rep}}
\newcommand{\Auteq}{\operatorname{Auteq}}
\newcommand{\Wred}{W_{\mathrm{red}}}
\newcommand{\Alg}{\operatorname{Alg}}
\newcommand{\shbimod}[2]{\operatorname{shbimod}(#1, #2)}
\newcommand{\twedge}{\textstyle{\bigwedge}}
\newcommand{\lstar}[1]{\!^* #1}
\newcommand{\lbot}[1]{\!^\bot #1}
\newcommand{\X}{{\bP ^{ 1 }}}
\newcommand{\xc}[1]{\textcolor{red}{#1}}
\newcommand{\cMtilde}{\widetilde{\cM}}
\newcommand{\Msht}{\cMtilde_{\mathrm{sh}}}
\newcommand{\Msh}{\cM_{\mathrm{sh}}}
\newcommand{\Mell}{\cM_{\mathrm{ell}}}
\newcommand{\Mshz}{\cM_{\mathrm{sh},0}}
\newcommand{\Mshtz}{\cMtilde_{\mathrm{sh},0}}
\newcommand{\Mrelz}{\cM_{\mathrm{rel},0}}
\newcommand{\Mreltz}{\cMtilde_{\mathrm{rel},0}}
\newcommand{\Mellz}{\cM_{\mathrm{ell},0}}
\newcommand{\Msho}{\cM_{\mathrm{sh},1}}
\newcommand{\Mshto}{\cMtilde_{\mathrm{sh},1}}
\newcommand{\Mrelto}{\cMtilde_{\mathrm{rel},1}}
\newcommand{\Mrelo}{\cM_{\mathrm{rel},1}}
\newcommand{\Mello}{\cM_{\mathrm{ell},1}}
\newcommand{\Mshi}{\cM_{\mathrm{sh},i}}
\newcommand{\Mreli}{\cM_{\mathrm{rel},i}}
\newcommand{\Melli}{\cM_{\mathrm{ell},i}}
\newcommand{\Mellt}{\cMtilde_{\mathrm{ell}}}
\title{Moduli of noncommutative Hirzebruch surfaces}
\author{Izuru Mori}
\address{
Department of Mathematics, 
Shizuoka University,
836 Ohya, 
Suruga-ku, 
Shizuoka 422-8529,
Japan
}
\email{mori.izuru@shizuoka.ac.jp}
\author{Shinnosuke Okawa}
\address{
Department of Mathematics,
Graduate School of Science,
Osaka University,
Machikaneyama 1-1,
Toyonaka,
Osaka,
560-0043,
Japan.
}
\email{okawa@math.sci.osaka-u.ac.jp}
\author{Kazushi Ueda}
\address{
Graduate School of Mathematical Sciences,
The University of Tokyo,
3-8-1 Komaba,
Meguro-ku,
Tokyo,
153-8914,
Japan.}
\email{kazushi@ms.u-tokyo.ac.jp}
\begin{document}

\maketitle

\begin{abstract}
We introduce three non-compact moduli stacks
parametrizing noncommutative deformations of Hirzebruch surfaces;
the first is the moduli stack of locally free sheaf bimodules
of rank 2,
which appears in the definition of noncommutative $\mathbb{P}^1$-bundle
in the sense of Van den Bergh \cite{MR2958936},
the second is the moduli stack of relations of a quiver
in the sense of \cite{1411.7770},
and
the third is the moduli stack of quadruples
consisting of an elliptic curve and three line bundles on it.
The main result of this paper shows that they are naturally birational to each other.
We also give an Orlov-type semiorthogonal decomposition
for noncommutative $\mathbb{P}^1$-bundles,
an explicit classification
of locally free sheaf bimodules of rank 2, and
a noncommutative generalization
of the (special) McKay correspondence as a derived equivalence
for the cyclic group $\left\langle \frac{1}{d}(1,1) \right\rangle$.
\end{abstract}

\tableofcontents

\section{Introduction}
A \emph{noncommutative $\X$-bundle}
over a smooth
scheme $X$
is a Grothendieck category
of the form $\Qgr \bS(\cE)$,
where $\bS(\cE)$ is the noncommutative symmetric algebra
over a locally free sheaf bimodule $\cE$ of rank 2
on $X$
\cite{MR2958936} (see \pref{sc:recap} for a recap).
%
%
%
One motivation to study such objects
comes from Artin's conjecture
\cite[Conjecture 4.1]{MR1477464},
which states that any noncommutative surface is birational to
either
\begin{enumerate}
 \item
a noncommutative projective plane,
 \item
a noncommutative $\X$-bundle over a commutative curve, or
 \item
a noncommutative surface which is finite over its center.
\end{enumerate}

If $X$ is a smooth projective curve of genus $g \ge 2$ and
$\cE$ is a stable vector bundle of rank 2 on $X$,
then one can easily show
$
 H^0 \lb \bP ( \cE ), \twedge^2 \cT_{\bP(\cE)} \rb = 0
$
and
$
 H^2( \bP ( \cE ), \cO_{\bP(\cE)}) = 0,
$
so that
$
 \HH^2(\bP(\cE)) \simeq H^1( \bP ( \cE ), \cT_{\bP(\cE)})
$
which means that every 
formal
noncommutative
deformation of
$
 \bP _{ X } ( \cE )
$
is commutative.

In contrast, any `continuous' noncommutative deformation of a Hirzebruch surface
$
 \Sigma_d \coloneqq \bP _{ \X } ( \cO_\X \oplus \cO_\X(d) )
$
is strictly noncommutative,
since isomorphism classes of
commutative Hirzebruch surfaces are parametrized by natural numbers,
and hence discrete.
It is shown in \cite[Theorem 1.3]{MR2958936}
that any noncommutative deformation, i.e. flat deformation of the
abelian category of quasi-coherent sheaves in the sense of \cite{MR2238922},
of a commutative Hirzebruch surface over a complete Noetherian local ring
is a noncommutative $\X$-bundle over $\X$.

For
$
 d \in \bZ _{ \ge 0 }
$,
the dimension of the Hochschild cohomology of
$
 \Sigma _{ d }
$
is given by
\begin{align}
 \dim \HH^1(\Sigma_d)
  &= h^0(\Theta_{\Sigma_d})
  = \max \{ d-1, 0 \} + 6, \\
 \dim \HH^2(\Sigma_d)
  &= h^0 \lb \twedge^2 \Theta_{\Sigma_d} \rb + h^1(\Theta_{\Sigma_d})
  = (\max \{ d-3, 0 \} + 9) + \max \{ d-1, 0 \}, \\
 \dim \HH^3(\Sigma_d)
  &= h^2(\Theta_{\Sigma_d}) + h^1\lb \twedge^2 \Theta_{\Sigma_d} \rb
  = 0 + \max \{ d-3, 0 \},
\end{align}
so that
the expected dimension
of the moduli space of noncommutative Hirzebruch surfaces
is
\begin{align}
 - \dim \HH^1(\Sigma_d)
 + \dim \HH^2(\Sigma_d)
 - \dim \HH^3(\Sigma_d)
 = 3.
\end{align}

In contrast with the case of del Pezzo surfaces, noncommutative deformation of $\Sigma_d$ is obstructed
when
$
 d > 3
$
by
\cite[Remark 5 and Proposition 9]{goto2016unobstructed}.
The non-vanishing of
$
 h^1 \lb \twedge^2 \Theta_{\Sigma_d} \rb
  = h^1 \lb \cO_{\Sigma_d} \lb - K_{\Sigma_d} \rb \rb
$
also leads to the absence of a geometric helix (in the sense of \cite{MR2646308}) of vector bundles.

The first result of this paper,
given in \pref{sc:sbc},
is a structure theorem
and an explicit and complete classification
of locally free sheaf bimodules of rank
$
 2
$
on
$
 \X
$.
We also study their Gieseker stability
with respect to the anti-canonical bundle in \pref{sc:stability_and_deformation_theory_of_sheaf_bimodules}.
It turns out that many of them are semi-stable,
which implies the unobstructedness of their deformations
as explained in \pref{pr:ext_groups_of_sheaf_bimodules}.

The second result of this paper,
given in \pref{sc:SOD},
is an Orlov-type semi-orthogonal decomposition
of the bounded derived category
$
 D ^{ b } \qgr \bS ( \cE )
$;
the derived category decomposes into two copies of
$
 D ^{ b } \coh X
$,
and the sheaf bimodule
$
 \cE
$
gives the integral kernel of the dual gluing functor.
%
In \pref{sc:fsec}, we show that a certain exceptional collection \eqref{eq:EC} on
$
 D ^{ b } \qgr \bS ( \cE )
$
is strong, for an appropriate choice of a parameter
$
 m
$.
This follows from the computation in \pref{sc:sbc} of the action of the dual gluing functor
$
 \phi ^{ ! }
$ on the line bundles
$
 \cO _{ \X }
$
and
$
 \cO _{ \X } ( - 1 )
$.
In \pref{cr:strong_for_m=1}, we give the list of sheaf bimodules of degree either
$
 2
$
or
$
 1
$
for which the exceptional collection \eqref{eq:EC} with $m = 1$ is strong.
The total morphism algebra of the collection is isomorphic
to the quotient of the path algebra of the quiver
$
 Q ^{ 0 }
$
or
$
 Q ^{ 1 }
$,
shown in \pref{fg:quiver0} or
\pref{fg:quiver1} respectively,
by a two-sided ideal $I$ called \emph{relations of the quiver}.

The relations $I$ depend on the isomorphism class of $\cE$,
and it is natural to ask to which extent $\cE$ can be reconstructed from $I$.
%
%
%
%
%
%
To study this kind of problems,
we introduce three moduli stacks
parametrizing noncommutative Hirzebruch surfaces in suitable senses.

The first moduli stack,
introduced in \pref{sc:Msh},
is the quotient
$\Msh \coloneqq [\Msht/(\PGL_2)^2]$
of the stack $\Msht$ of locally free sheaf bimodules of rank 2 on $\X$
by the natural action of the group
$
 (\PGL_2)^2 \simeq \Aut(\X) \times \Aut(\X)
$.
It is natural to take the quotient by this action, because it preserves the equivalence classes of the associated category
$
 \Qgr \bS ( \cE )
$
as we
recall
in \pref{sc:twisting}.
The stack $\Msh$ is divided into connected components by the degree,
but only the parity of the degree matters,
since components whose degrees are different by a multiple of 2 are isomorphic.
We write the connected component
parametrizing sheaf bimodules of degree 2 (resp. 1) as
$\Mshz$ (resp. $\Msho$).


The second moduli stack,
introduced in \pref{sc:quiver},
is the moduli stack of relations of the quivers
$Q^0$ and $Q^1$ in the sense of \cite{1411.7770}. They will be denoted by $\Mrelz$ and $\Mrelo$, respectively.


The third moduli stack,
introduced in \pref{sc:Mell},
is the moduli stack $\Mell$ of \emph{nonsingular admissible quadruples},
which parametrizes isomorphism classes of collections $(E, L_0, L_1, L_2)$
of an elliptic curve $E$
and three line bundles $(L_0, L_1, L_2)$
such that
$
 L_0 \not \simeq L_2
$.
We consider two connected components $\Mellz$ and $\Mello$ such that
$
 \deg (L_0, L_1, L_2) = ( 2, 2, 2 )
$
and
$
 ( 2, 1, 2 )
$
respectively.

\begin{figure}
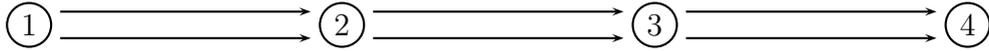

\begin{align*}
\begin{psmatrix}[rowsep=35mm,colsep=35mm,mnode=circle]
 1 & 2 & 3 & 4
\psset{nodesep=3pt,arrows=->}
\ncline[offset=5pt]{1,1}{1,2}
\ncline[offset=-5pt]{1,1}{1,2}
\ncline[offset=5pt]{1,2}{1,3}
\ncline[offset=-5pt]{1,2}{1,3}
\ncline[offset=5pt]{1,3}{1,4}
\ncline[offset=-5pt]{1,3}{1,4}
\end{psmatrix}
\end{align*}
\caption{The quiver $Q^0$}
\label{fg:quiver0}
\end{figure}

\begin{figure}
\ \\[15mm]
\begin{align*}
\begin{psmatrix}[rowsep=35mm,colsep=35mm,mnode=circle]
 1 & 2 & 3 & 4
\psset{nodesep=3pt,arrows=->}
\ncline[offset=5pt]{1,1}{1,2}
\ncline[offset=-5pt]{1,1}{1,2}
\ncline[offset=0pt]{1,2}{1,3}
\ncline[offset=5pt]{1,3}{1,4}
\ncline[offset=-5pt]{1,3}{1,4}
\nccurve[angleA=50,angleB=130,offset=3pt]{1,1}{1,3}
\nccurve[angleA=-50,angleB=-130,offset=3pt]{1,2}{1,4}
\end{psmatrix}
\end{align*}
\ \\[15mm]
\caption{The quiver $Q^1$}
\label{fg:quiver1}
\end{figure}

The third result of this paper
is the comparison of these three moduli stacks:

\begin{theorem} \label{th:main}
For each $i \in \{ 0,1 \}$,
three moduli stacks $\Mshi$, $\Mreli$ and $\Melli$ are
naturally birational to each other. When
$
 i = 0
$,
for a generic triple
\begin{align}
 \lb \cE, I \subset \bfk Q ^{ 0 }, \lb E, L _{ 0 }, L _{ 1 }, L _{ 2 } \rb \rb
\end{align}
which correspond to each other by the birational maps,
there are derived equivalences
\begin{align}
 D ^{ b } \qgr \bS \lb \cE \rb
 \simeq
 D ^{ b } \module \bfk Q ^{ 0 } / I
 \simeq
 D ^{ b } \qgr A \lb E, L _{ 0 }, L _{ 1 }, L _{ 2 } \rb,
\end{align}
where
$
 A \lb E, L _{ 0 }, L _{ 1 }, L _{ 2 } \rb
$
is the 3-dimensional cubic AS-regular $\bZ$-algebra associated to the quadruple
$
 \lb E, L _{ 0 }, L _{ 1 }, L _{ 2 } \rb
$
by the correspondence of \cite{MR2836401}.
\end{theorem}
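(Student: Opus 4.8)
The plan is to use the middle stack $\Mreli$ of relations as a hub, constructing rational maps $\Mshi \dashrightarrow \Mreli$ and $\Melli \dashrightarrow \Mreli$ out of strong exceptional collections and tilting theory, and then proving that each is birational by reconstructing the source data from the relations alone. All three stacks should have dimension $3$, matching the expected dimension computed in the introduction, so it suffices to exhibit mutually inverse rational maps on dense open substacks and to check that the generic groupoid fibers are trivial.

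First I would treat the map $\Mshi \dashrightarrow \Mreli$. By the classification in \pref{cr:strong_for_m=1}, there is a dense open substack of $\Mshi$ over which the exceptional collection \eqref{eq:EC} with $m = 1$ is strong. For such $\cE$, tilting theory (in the form of Bondal's theorem) yields a derived equivalence $D ^{ b } \qgr \bS(\cE) \simeq D ^{ b } \module \operatorname{End}(T)$, where $T$ is the tilting object given by the collection and $\operatorname{End}(T) \simeq \bfk Q ^{ i } / I$ by the morphism-algebra computation of \pref{sc:fsec}. Sending $\cE$ to $I$ defines the rational map. To invert it, I would use the Orlov-type semiorthogonal decomposition of \pref{sc:SOD}: since $\cE$ is recovered as the integral kernel of the dual gluing functor, and since the action of $\phi ^{ ! }$ on $\cO _{ \X }$ and $\cO _{ \X }(-1)$ is computed explicitly in \pref{sc:sbc}, the arrows of the quiver together with the relations $I$ determine this kernel, hence $\cE$, up to the $(\PGL_2)^2$-action that has already been quotiented out. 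Matching the two $3$-dimensional families then gives birationality.

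Next I would treat the map $\Melli \dashrightarrow \Mreli$ for $i = 0$. To a nonsingular admissible quadruple $(E, L_0, L_1, L_2)$ of degree $(2,2,2)$ one associates, by the correspondence of \cite{MR2836401}, the $3$-dimensional cubic AS-regular $\bZ$-algebra $A(E, L_0, L_1, L_2)$. This algebra carries a canonical tilting object in $D ^{ b } \qgr A$ whose endomorphism algebra is again of the form $\bfk Q ^{ 0 } / I$, giving $D ^{ b } \qgr A(E, L_0, L_1, L_2) \simeq D ^{ b } \module \bfk Q ^{ 0 } / I$ and thereby the rational map to $\Mrelz$. For the inverse, I would recover the quadruple as the geometric point-scheme data of the AS-regular algebra reconstructed from $I$: the elliptic curve $E$ together with the three line bundles reappears as the twisting data underlying $A$, which is precisely what \cite{1411.7770} and \cite{MR2836401} encode in the relations. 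A comparison of the two presentations of $\bfk Q ^{ 0 } / I$, one from the sheaf-bimodule side and one from the AS-regular side, then shows that they define the same point of $\Mrelz$, so the two birational maps are compatible.

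Composing the equivalences along the hub finally gives, for a generic triple $(\cE, I, (E, L_0, L_1, L_2))$ corresponding under the birational maps, the chain $D ^{ b } \qgr \bS(\cE) \simeq D ^{ b } \module \bfk Q ^{ 0 } / I \simeq D ^{ b } \qgr A(E, L_0, L_1, L_2)$. The main obstacle I expect is the birationality rather than the equivalences themselves: one must show that each rational map is generically injective on isomorphism classes, i.e. that the generic fiber is a single groupoid point, which relies on the explicit reconstruction of $\cE$ (respectively the quadruple) from $I$ via the $\phi ^{ ! }$-computation of \pref{sc:sbc} together with a careful dimension count. A secondary difficulty is identifying the two a priori different presentations of $\bfk Q ^{ 0 } / I$ coming from the two sides, which is where the normalization of the relations in \cite{1411.7770} and the degree-$(2,2,2)$ hypothesis enter; this is also the reason the derived-equivalence half of the statement is restricted to $i = 0$.
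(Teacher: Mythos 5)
Your architecture --- using $\Mreli$ as the hub and reconstructing both outer stacks from the relations --- differs from the paper's, and the difference exposes a genuine gap in your inverse map $\Mreli \dashrightarrow \Mshi$. You claim that $\cE$ is recovered from $I$ ``via the $\phi^!$-computation of \pref{sc:sbc}'', but that computation only records the splitting types $(a,b)$ and $(a',b')$ of $\cO_\X \otimes \cE$ and $\cO_\X(-1) \otimes \cE$, i.e.\ discrete invariants; it cannot see the continuous $3$-dimensional moduli of $\cE$, so it cannot furnish an inverse, and generic injectivity --- which you correctly single out as the main obstacle --- is left unproved. What is needed is a mechanism producing, from the finite-dimensional algebra $\bfk Q^i/I$ alone, the geometric data $(W,\cU)$. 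The paper supplies this in \pref{sc:relell} by forming the fine moduli scheme $\cN_I$ of $\theta$-stable representations of $(Q^0,I)$ with dimension vector $(1,1,1,1)$: it is a complete intersection of two divisors of multidegree $(1,1,1)$ in $\bP(V_1)\times\bP(V_2)\times\bP(V_3)$, hence an elliptic curve for generic $I$, and the tautological line bundles $M_1,\dots,M_4$ recover the quadruple $(E,L_0,L_1,L_2)$, whence $\cE$ via the embedding $E\hookrightarrow\X\times\X$ defined by $L_0$ and $L_2$. The identity $\Psi_0\Phi_0=\id$ is then checked by exhibiting the canonical classifying morphism $E\to\cN_I$ and showing it is an isomorphism, and birationality follows because both stacks are irreducible and smooth of dimension $3$ with trivial generic stabilizers; for $i=1$ the analogous moduli space is a toric variety ($\bP^2\times\bP^2$ blown up at a point) and a separate argument is required, which your proposal does not address.

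A secondary point: you route the comparison $\Mshi\leftrightarrow\Melli$ through $\Mreli$, whereas in the paper it is essentially tautological --- on the locus where $W$ is a smooth $(2,2)$-divisor, $\cE\mapsto(W,L_0,L_1,L_2)$ with the $L_i$ of \eqref{eq:definition_of_L0_L1_L2} is an isomorphism of stacks $\Mshi^0\simeq\Melli$, with inverse $(E,L_0,L_1,L_2)\mapsto\iota_*\lb L_0^{-1}\otimes L_1\otimes L_2\rb$. With this in hand only one nontrivial birational comparison remains, and the derived equivalences for $i=0$ come from identifying the relation space \eqref{eq:F0_relations} with the cubic relations of $A(W,L_0,L_1,L_2)$, not from a separate tilting object on the AS-regular side. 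Your outline would become a proof if the appeal to the $\phi^!$-computation were replaced by the quiver-moduli reconstruction (or an equivalent reconstruction of the gluing kernel from the bimodule $\bigoplus_{i\le 2<j}e_j(\bfk Q^0/I)e_i$); as written, the key injectivity step is not established.
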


It is an interesting problem to find a complete moduli scheme
(or a proper Deligne--Mumford stack)
birational to the above moduli stacks.

Recall that
$
 \Sigma _{ d }
$
for $d \ge 2$
admits a birational morphism to the weighted projective plane
$
 \bfP ( 1, 1, d )
$
which contracts the negative section to a
$
 \frac{1}{d} ( 1, 1 )
$-singularity.
As is well-known,
there is a fully faithful functor
$
 D ^{ b } \coh \Sigma _{ d } \hookrightarrow D ^{ b } \coh \bP ( 1, 1, d )
$,
which is a derived equivalence
when $d=2$.
This phenomenon is a particular case of
the (special) McKay correspondence
as a derived equivalence.
In \cite{MR1743651},
Stephenson studied AS-regular algebras $S$ generated by 3 elements of degree
$
 1, 1, d
$.
By taking the associated category
$
 \Qgr S
$
of those algebras, we obtain noncommutative
$
 \bP ( 1, 1, d )
$.
In \pref{sc:McKay},
we consider one family of such algebras $S$,
and show the existence of sheaf bimodules
$
 \cE
$
and fully faithful functors
$
 D ^{ b } \qgr \bS ( \cE ) \hookrightarrow D ^{ b } \qgr S
$,
which are equivalences if $d=2$.



\section*{Notation and conventions}
Throughout this paper,
we fix an algebraically closed base field $\bfk$ of characteristic zero.
Schemes are defined over $\bfk$,
and categories and functors are linear over $\bfk$.
Triangulated categories and exact functors are
enhanced in the sense of \cite{MR1055981}.
For a pair $(X, Y)$ of objects
in an enhanced triangulated category $\cD$,
the complex of $\bfk$-vector spaces
underlying $\Hom_\cD(X, Y)$ will be denoted by $\hom_\cD(X, Y)$.

\section*{Acknowledgements}
During the preparation of this paper,
I.~M.~was partially supported by JSPS Grant-in-Aid for Scientific Research
(16K05097,16K13743,16H03923),
S.~O.~was partially supported by JSPS Grant-in-Aid for Young Scientists No.~25800017,
Grants-in-Aid for Scientific Research
(16H05994,
16K13746,
16H02141,
16K13743,
16K13755,
16H06337)
and the Inamori Foundation, and
K.~U.~was partially supported by JSPS Grant-in-Aid for Scientific Research
(15KT0105,
16H03930,
16K13743).

\section{Recapitulation on noncommutative $\X$-bundle}
\label{sc:recap}
\subsection{Sheaf bimodules}
 \label{sc:shbimod}

Let $X$ and $Y$ be schemes.
A coherent sheaf $\cE$ on $X \times Y$ is said to be
a \emph{sheaf bimodule}
if the scheme-theoretic support
\begin{align}
 W := \Spec _{ X \times Y} \Image \lb \cO _{X \times Y} \to \cEnd \cE \rb
\end{align}
is finite over both $X$ and $Y$
\cite{MR1409223, MR2716732, MR1793588}.
The full subcategory of $\coh(X \times Y)$
consisting of sheaf bimodules is denoted by
$\shbimod{X}{Y}$.
A sheaf bimodule $\cE$ defines a right exact functor
\begin{align}\label{eq:functor_defined_by_the_sheaf_bimodule}
 (-) \otimes_{\cO_X} \cE \colon \coh X \to \coh Y, \quad
 \cF \mapsto p_{Y*} \lb p_X^* \cF \otimes_{\cO_{X \times Y}} \cE \rb,
\end{align}
where
$
 p_X \colon X \times Y \to X
$
and
$
 p_Y \colon X \times Y \to Y
$
are the projections.
Let $u$ and $v$ be the restrictions of $p_X$ and $p_Y$ to $W$ respectively,
so that the inclusion is given by $\iota = u \times v \colon W \to X \times Y$;
\begin{align}
\begin{psmatrix}[rowsep=5mm,colsep=20mm]
  & W \\[6mm]
  & X \times Y \\
  X & & Y.
\end{psmatrix}
\psset{shortput=nab, nodesep=1mm, arrows=->}
\ncline[arrows=H->,hooklength=2mm,hookwidth=2mm,offset=1mm,nodesep=2mm]{1,2}{2,2}_[npos=.7]{\iota}
\ncline{1,2}{3,1}_u
\ncline{1,2}{3,3}^v
\ncline{2,2}{3,1}^{p_X}
\ncline{2,2}{3,3}_{p_Y}
\end{align}
Let $\cU$ be the unique coherent $\cO_W$-module
such that $\iota_* \cU = \cE$.
Then the functor \eqref{eq:functor_defined_by_the_sheaf_bimodule} can be rewritten as
\begin{align} \label{eq:otimesE}
 (-) \otimes_{\cO_X} \cE
  &\simeq v_* \lb u^*(-) \otimes_{\cO_W} \cU \rb
  \colon \coh X \to \coh Y.
\end{align}

The \emph{convolution} of sheaf bimodules
$\cE \in \shbimod{X}{Y}$ and $\cF \in \shbimod{Y}{Z}$
is defined by
\begin{align}
 \cE \otimes_{\cO_Y} \cF
  \coloneqq p_{XZ*}
   \lb p_{XY}^* \cE \otimes_{\cO_{X \times Y \times Z}}
    p_{YZ}^* \cF
   \rb
    \in \shbimod{X}{Z},
\end{align}
where
$
 p _{ \bullet \circ }
$
denotes the projection from
$
 X \times Y \times Z
$
to
$
 \bullet \times \circ
$.
It is clear that there exists an isomorphism
\begin{align}
 \lb (-) \otimes_{\cO_X} \cE) \otimes_{\cO_Y} \cF \rb
  \simeq (-) \otimes_{\cO_X} \lb \cE \otimes_{\cO_Y} \cF \rb
\end{align}
of functors from $\coh X$ to $\coh Z$.

A sheaf bimodule $\cE$ is said to be \emph{locally free of rank $r$}
if both $p_{X*} \cE$ and $p_{Y*} \cE$ are locally free of rank $r$
\cite[Definition 3.1.3]{MR2958936}.
The structure sheaf $\cO_{\Delta_X}$
of the diagonal $\Delta_X \subset X \times X$
is called the \emph{diagonal sheaf bimodule}.
It is locally free of rank one, and
the corresponding functor
$
 (-) \otimes_{\cO_X} \cO_{\Delta_X} \colon \coh X \to \coh X
$
is isomorphic to the identity functor.

The locally-freeness of a sheaf bimodule implies that the integral transformation is defined on the underived level.

\begin{lemma}
Suppose that a sheaf bimodule $\cE$ is locally free. Then the functor \eqref{eq:otimesE} is exact.
\end{lemma}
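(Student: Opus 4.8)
The plan is to use the factorization \eqref{eq:otimesE} and analyze its two constituents separately. Since $W$ is finite over $Y$, the morphism $v \colon W \to Y$ is affine, so $v_*$ is exact on quasi-coherent sheaves; it therefore suffices to show that the functor $\cF \mapsto u^* \cF \otimes_{\cO_W} \cU$ from $\coh X$ to $\coh W$ is exact.

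For this I would rewrite $u^* \cF \otimes_{\cO_W} \cU \simeq u^{-1} \cF \otimes_{u^{-1} \cO_X} \cU$, using $u^* \cF \simeq \cO_W \otimes_{u^{-1} \cO_X} u^{-1} \cF$. As the sheaf-theoretic inverse image $u^{-1}$ is exact, the exactness of the displayed functor reduces to the flatness of $\cU$ over $X$ along $u$, i.e. the flatness of every stalk $\cU_w$ over $\cO_{X, u(w)}$.

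The crux, and the step I expect to be the main obstacle, is to extract this relative flatness from the hypothesis, which only asserts that the pushforward $u_* \cU = p_{X*} \cE$ is locally free over $\cO_X$, rather than giving information about $\cU$ directly. Here the finiteness of $u$ is what saves the day. Fixing $x \in X$ and writing $A = \cO_{X, x}$, let $B$ be the corresponding semi-local finite $A$-algebra, whose maximal ideals correspond to the points $w \in u^{-1}(x)$, and let $M$ be the finite $B$-module giving $\cU$. Then $u_* \cU$ is exactly $M$ viewed as an $A$-module, which the hypothesis forces to be free, hence $A$-flat. Since $\cU_w$ is the localization $M_w$ of $M$ at the maximal ideal corresponding to $w$, the identity $N \otimes_A M_w \simeq \lb N \otimes_A M \rb \otimes_B B_w$ for $A$-modules $N$, together with the $A$-flatness of $M$ and the exactness of localization, shows that $\cU_w = M_w$ is flat over $A = \cO_{X, u(w)}$. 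This yields the flatness of $\cU$ along $u$, and hence the exactness of the functor.
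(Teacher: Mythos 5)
Your proof is correct. The paper in fact states this lemma without proof, so there is no argument to compare against; your route is the natural one and it is complete. The two reductions (exactness of $v_*$ from affineness of the finite morphism $v$, and exactness of $u^*(-)\otimes_{\cO_W}\cU$ from flatness of $\cU$ over $\cO_X$ along $u$) are both sound, and the key step --- extracting flatness of each stalk $\cU_w$ over $\cO_{X,u(w)}$ from the hypothesis that $u_*\cU = p_{X*}\cE$ is locally free --- is handled correctly: freeness of the semi-local module $M$ over the local ring $A$ gives $A$-flatness, and the identification $N \otimes_A M_w \simeq (N \otimes_A M)\otimes_B B_w$ transfers this to the localization $M_w$ since localization is exact. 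Note that you only use local freeness of $p_{X*}\cE$ and not of $p_{Y*}\cE$, which is exactly the half of the definition relevant to the functor $(-)\otimes_{\cO_X}\cE \colon \coh X \to \coh Y$.
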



The functor \eqref{eq:otimesE}
extends to an exact functor of triangulated categories from $D^b \coh X$ to $D^b \coh Y$,
which we write $(-) \otimes_{\cO_X} \cE$ again by an abuse of notation.

It is known \cite[Section 3]{MR2958936} that if
$
 \cE
$
is locally free, then the functor \eqref{eq:functor_defined_by_the_sheaf_bimodule} admits the left and the right adjoint functors. They are given by
$
 (-) \otimes_{\cO_Y} \lstar{\cE} \colon \coh Y \to \coh X
$
and
$
 (-) \otimes_{\cO_Y} \cE^* \colon \coh Y \to \coh X
$
where
\begin{align}
 \cE^* &\coloneqq
 p _{ Y }^* \omega_{Y}^{-1} \otimes_{\cO_{Y \times X}} \cE^D, \\
 \lstar{\cE}
  &\coloneqq \cE^D \otimes_{\cO_{Y \times X}} p _{ X }^* \omega_X^{-1},
\end{align}
and
\begin{equation}\label{eq:CM_dual}
  \cE^D \coloneqq
  \cHom_{\cO _{ X \times Y }} ^{ \codim \cE } ( \cE, \omega _{ X \times Y } )
  =
  \cHom_{\cO _{ X \times Y }} ^{ n } ( \cE, \omega _{ X \times Y } )
\end{equation}
is the \emph{Cohen-Macaulay dual} of
$
 \cE
$
\cite[Definition 1.1.7]{MR2665168}.

%
%

\subsection{Sheaf $\bZ$-algebras}
 \label{sc:shZalg}


A \emph{sheaf
$
 \bZ
$-algebra} over a scheme $X$ is a category $\cC$ enriched over the monoidal category
$
 \lb \shbimod{X}{X}, - \otimes _{ \cO _{ X } } - \rb
$
equipped with a bijection
$
 \bZ \simto \mathop{\mathrm{Obj}} \cC
$.
Concretely, a \emph{sheaf $\bZ$-algebra}
\begin{align}
 \cA =
 \Alg ( \cC )
 =
  \lb
   \lb \cA_{ij} \rb_{i,j \in \bZ},
   \lb \eta_i \rb_{i \in \bZ},
   \lb \mu_{ijk} \rb_{i,j,k \in \bZ}
  \rb
\end{align}
on a scheme $X$
consists of
\begin{itemize}
 \item
sheaf bimodules $\cA_{ij} \in \shbimod{X}{X}$,
 \item
morphisms
$
 \eta_i \colon \cO _{\Delta_{X}} \to \cA_{ii}
$
of sheaf bimodules
called the \emph{units}, and
 \item
morphisms
$
 \mu _{ijk} \colon \cA_{ij} \otimes_{\cO_X} \cA_{jk}
  \to \cA_{ik}
$
called \emph{multiplication maps}
\end{itemize}
such that
\begin{itemize}
 \item
the compositions
\begin{align}
 \cA_{ij} \simto \cA_{ij} \otimes_{\cO_X} \cO_{\Delta_X}
  \xto{\id \otimes \eta_j} \cA_{ij} \otimes_{\cO_X} \cA_{jj}
  \xto{\mu_{ijj}} \cA_{ij}
\end{align}
and
\begin{align}
 \cA_{ij} \simto \cO_{\Delta_X} \otimes_{\cO_X} \cA_{ij}
  \xto{\eta_i \otimes \id} \cA_{ii} \otimes_{\cO_X} \cA_{ij}
  \xto{\mu_{iij}} \cA_{ij}
\end{align}
are the identity morphisms, and
 \item
the diagrams
\begin{align}
\begin{CD}
 \cA_{ij} \otimes_{\cO_X} \cA_{jk} \otimes_{\cO_X} \cA_{kl}
  @>{\mu_{ijk} \otimes \id}>>
 \cA_{ik} \otimes_{\cO_X} \cA_{kl} \\
  @V{\id \otimes \mu_{jkl}}VV @VV{\mu_{ikl}}V \\
 \cA_{ij} \otimes_{\cO_X} \cA_{jl}
  @>{\mu_{ijl}}>>
 \cA_{il}
\end{CD}
\end{align}
are commutative.
\end{itemize}
A (right) \emph{module}
$\cM = \lb (\cM_i)_{i \in \bZ}, (h_{ij})_{i,j \in \bZ} \rb$
over a sheaf $\bZ$-algebra $\cA$ on a scheme $X$
consists of
\begin{itemize}
 \item
$\cO_X$-modules $\cM_i$, and
 \item
morphisms 
$
 h_{ij} \colon \cM_i \otimes_X \cA_{ij} \to \cM_j
$
of $\cO_X$-modules called the \emph{action}
\end{itemize}
such that
\begin{itemize}
 \item
the composition
\begin{align}
 \cM_i \simto \cM_i \otimes_{\cO_X} \cO_{\Delta_X}
  \xto{\id \otimes \eta_i} \cM_i \otimes_{\cO_X} \cA_{ii}
  \xto{h_{ii}} \cM_i
\end{align}
is the identity morphism, and
 \item
the diagrams
\begin{align}
\begin{CD}
 \cM_i \otimes_{\cO_X} \cA_{ij} \otimes_{\cO_X} \cA_{jk}
  @>{h_{ij} \otimes \id}>>
 \cM_j \otimes_{\cO_X} \cA_{jk} \\
  @V{\id \otimes \mu_{ijk}}VV @VV{h_{jk}}V \\
 \cM_i \otimes_{\cO_X} \cA_{ik}
  @>{h _{ i k }}>>
 \cM_k
\end{CD}
\end{align}
are commutative.
\end{itemize}
A \emph{morphism}
$
 f = (f_i)_{i \in \bZ} \colon \cM \to \cN
$
of $\cA$-modules consists of morphisms
$
 f_i \colon \cM_i \to \cN_i
$
of $\cO_X$-modules such that the diagrams
\begin{align}
\begin{CD}
 \cM_i \otimes_{\cO_X} \cA_{ij} @>{f_i \otimes \id}>> \cN_i \otimes_{\cO_X} \cA_{ij} \\
 @V{h^\cM_{ij}}VV @VV{h^\cN_{ij}}V \\
 \cM_j @>{f_j}>> \cN_j
\end{CD}
\end{align}
are commutative.
The category of $\cA$-modules is denoted by $\Gr \cA$.

 An $\cA$-module is \emph{right-bounded}
if $\cM_i \simeq 0$ for $i \gg 0$.
An $\cA$-module is \emph{torsion}
if it is a direct limit of right-bounded objects.
The full subcategory of $\Gr \cA$
consisting of torsion modules is denoted by $\Tor \cA$.
A Grothendieck category is \emph{locally Noetherian}
if it has a small generating family
of Noetherian objects.
In the rest of the paper we will assume that
$
 \cA
$
is right Noetherian
and positively graded in the sense that $\cA_{ij}=0$ for $i>j$, 
so that the category
$
 \Gr \cA
$
is locally Noetherian and
$
 \Tors \cA \subset \Gr \cA
$
is a localizing subcategory. The quotient abelian category is denoted by
$
 \Qgr \cA \coloneqq \Gr \cA / \Tor \cA.
$
The torsion functor
$
 \tau \colon \Gr \cA \to \Tor \cA,
$
the quotient functor
$
 \pi \colon \Gr \cA \to \Qgr \cA,
$
and its right adjoint
$
 \omega \colon \Qgr \cA \to \Gr \cA
$
are defined as in \cite {MR1304753}.
\begin{comment}
  \xc{(If $\cA$ is not noetherian, that is, if $\Gr \cA$ is not locally noetherian, then I do not know if $\Tors \cA$ is a Serre subcategory.  On the other hand, if $\cA$ is noetherian, then $\Tors \cA$ is in fact a localizing subcategory so that the right adjoint $\omega$ exists.)}
\end{comment}
In this case, 
the image of the unit $\eta_i$ will be denoted by $e_i$,
which is an $\cO_{X \times X}$-submodule of $\cA_{ii}$.
It defines the functor
\begin{align}
 (-) \otimes_{\cO_X} e_i \cA
  \colon \Qcoh X \to \Gr \cA.
\end{align}

\subsection{Noncommutative symmetric algebras}
 \label{sc:ncsa}


Let $\cE$ be a locally free sheaf bimodule of rank 2 on a smooth scheme $X$.
We define a sequence $(\cE^{*i})_{i \in \bZ}$
of sheaf bimodules of rank 2 on $X$ inductively by
\begin{align}
 \cE^{*i} \coloneqq
\begin{cases}
 (\cE^{*(i-1)})^{*} & i \ge 1, \\
 \cE & i = 0, \\
 \lstar{(\!^{*(i+1)} \cE)} & i \le -1.
\end{cases}
\end{align}
Let
\begin{align}
 i_n \colon \cO_{\Delta_X} \to \cE^{*n} \otimes_{\cO_X} \cE^{*(n+1)}
\end{align}
be the canonical morphism coming from the adjunction
\begin{align}
 \Hom (\cE^{*n}, \cE^{*n})
  \simeq \Hom \lb \cO_{\Delta_X}, \cE^{*n} \otimes_{\cO_X} \cE^{*(n+1)} \rb
\end{align}
in $\shbimod{X}{X}$,
and $\cQ_n \subset \cE^{*n} \otimes_{\cO_X} \cE^{*(n+1)}$
be the image of $i_n$.
The sheaf bimodule $\cQ_n$ is invertible
since $i_n$ is injective by \cite[Proposition 3.1.10]{MR2958936}.
The \emph{noncommutative symmetric algebra}
is the sheaf $\bZ$-algebra $\bS(\cE)$ on $X$
generated by $\cE^{*i}$ subject to the relations $\cQ_i$.
To be more explicit,
it is a sheaf $\bZ$-algebra with
\begin{align}
 \bS(\cE)_{ij} =
\begin{cases}
 0 & i > j, \\
 \cO _{\Delta _{X}} & i = j, \\
 \cE ^{ * i } & j = i + 1, \\
 \lb \cE ^{ * i } \otimes_{\cO_X} \cdots \otimes_{\cO_X} \cE ^{ * ( j - 1 ) } \rb /
 \cR_{i j} & j > i + 1,
\end{cases}
\end{align}
where
\begin{align}
 \cR_{ i j } \coloneqq \sum_{k=i}^{j-2}
  \cE ^{ * i } \otimes_{\cO_X} \cdots \otimes_{\cO_X} \cE ^{ * ( k - 1 ) } \otimes_{\cO_X}
  \cQ_k \otimes_{\cO_X} \cE ^{ * ( k + 2 ) } \otimes_{ \cO _{ X } }
  \cdots \otimes _{ \cO _{ X } } \cE ^{ * ( j - 1 ) }.
\end{align}

\subsection{Noncommutative $\X$-bundles over commutative schemes}
 \label{sc:ncP1-bundle}

Recall from \cite[Section 3.5]{MR1846352} that
a \emph{quasi-scheme} $X$ is a symbol
with which one associates a Grothendieck category $\Module X$,
and an \emph{enriched quasi-scheme} is a pair
$(X, \cO_X)$
of a quasi-scheme $X$ and
an object $\cO_X$ of $\Module X$.

Let $\cE$ be a locally free sheaf bimodule of rank 2
on a smooth scheme $X$.
The \emph{noncommutative $\bP ^1$-bundle} $\bP(\cE)$
in the sense of \cite{MR2958936}
is the quasi-scheme with
\begin{align}
 \Module \bP(\cE) \coloneqq \Qgr \bS(\cE).
\end{align}
It is naturally enriched by
\begin{align}
 \cO_{\bP(\cE)} \coloneqq \pi \lb \cO_X \otimes_{\cO_X} e_0 \bS(\cE) \rb.
\end{align}
The category $\Gr \bS(\cE)$
and hence $\Qgr \bS(\cE)$
is locally Noetherian
by \cite[Theorem 1.2]{MR2958936}.
The full subcategory of $\Qgr \cA$ consisting of Noetherian objects
will be denoted by $\qgr \cA$.
The exact functor
\begin{align}
 f_n^* \coloneqq \pi \lb (-) \otimes_{\cO_X} e_n \cA \rb
  \colon \Qcoh X \to \Qgr \cA
\end{align}
induces a functor from $\coh X$ to $\qgr \cA$
by \cite[Proposition 2.16]{MR2115370},
which will be denoted by $f_n^*$ again
by an abuse of notation.
The images of $\coh X$ by $f_n^*$
for all $n \in \bZ$ together generates $\Qgr \bS(\cE)$
by \cite[Proposition 2.19]{MR2115370}.
The functor
\begin{align}
  f_{n*} \coloneqq \lb \omega(-) \rb_n
  \colon \Qgr \cA \to \Qcoh X.
\end{align}
is right adjoint to $f_n^*$
by \cite[Lemma 2.15]{MR2115370}.
It is left exact since it has a left adjoint.
Its derived functor,
which exists since $\Qgr \cA$ is a Grothendieck category,
is denoted by $\bR f_{n*}$.
It satisfies
\begin{align}
 \RHom(f_n^* \cF, \cM) \simeq \RHom(\cF, \bR f_{n*} \cM)
\end{align}
by \cite[Lemma 4.2]{MR2333760}.
For any locally free $\cO_X$-module $\cF$,
one has
\begin{align} \label{eq:Mor07.4.4}
 \bR f_{m*}(f_n^* \cF) \simeq
\begin{cases}
 \cF \otimes_{\cO_X} \cA_{n, m} & n \le m, \\
 0 & n = m+1, \\
 \cF \otimes_{\cO_X} \cQ_{n - 2}^* \otimes_{\cO_X} (\cA_{m,n-2})^*[-1] & n \ge m+2
\end{cases}
\end{align}
by \cite[Lemma 4.4]{MR2333760}.
It follows that
\begin{align}
 \bR {f_n}_* \circ f_n^*
  \simeq (-) \otimes_{\cO_X} \cO_{\Delta_X}
  \simeq \id \colon D^b \coh X \to D^b \coh X
\end{align}
for any $n \in \bZ$,
so that $f_n^*$ is full and faithful.

Recall from \cite{MR1039961}
that a Serre functor in a $\Hom$-finite $\bfk$-linear category $\cD$
is an additive equivalence $S \colon \cD \to \cD$
with bi-functorial isomorphisms
$
 \phi_{A,B} \colon \Hom(A, B) \simto \Hom(B, S(A))^*.
$
The category $D^b\qgr \bS(\cE)$ has a Serre functor
by \cite{MR2115370,MR3084432}.

\subsection{Invariance of the module categories under the action of invertible sheaf bimodules}
 \label{sc:twisting}

Let $\cA = \lb \cA_{ij} \rb_{i,j \in \bZ}$ be a sheaf $\bZ$-algebra on $X$,
and $\cT = (\cT_i)_{i \in \bZ}$ be a sequence of invertible objects
in $\shbimod{X}{X}$.
The \emph{twist of $\cA$ by $\cT$} is defined in \cite[Section 3.2]{MR2958936}
as the sheaf
$
 \bZ
$-algebra $\cB \coloneqq \cA_{\cT}$ given by
\begin{align}
 \cB_{ij} \coloneqq \cT_i^{-1} \otimes_{\cO_X} \cA_{ij}
 \otimes_{\cO_X} \cT_j
\end{align}
with the obvious multiplications.
Then the functor
\begin{align}
(\cM_i)_{i\in \bZ} \mapsto 
(\cM_i \otimes_{\cO_X} \cT_i)_{i\in \bZ}
\end{align}
defines an equivalence $\Gr(\cA) \simto \Gr(\cB)$,
which descends to an equivalence $\Qgr(\cA) \simto \Qgr(\cB)$.

Let $\cE$ be a locally free sheaf bimodule.
The definition
\begin{align}
 \Hom(\cM \otimes_{\cO_X} \cE, \cN)
  \simeq \Hom(\cM, \cN \otimes_{\cO_X} \cE^*)
\end{align}
of the adjunction implies
\begin{align}
 (\cT_0^{-1} \otimes_{\cO_X} \cE \otimes_{\cO_X} \cT_1)^*
  \simeq \cT_1^{-1} \otimes_{\cO_X} \cE^* \otimes_{\cO_X} \cT_0
\end{align}
for any pair $(\cT_0, \cT_1)$ of invertible sheaf bimodules.
It follows that
\begin{align}
 (\cT_0^{-1} \otimes_{\cO_X} \cE \otimes_{\cO_X} \cT_1)^{*m} \simeq
\begin{cases}
 \cT_0^{-1} \otimes_{\cO_X} \cE^{*m} \otimes_{\cO_X} \cT_1 & m \colon \mbox{even}, \\
 \cT_1^{-1} \otimes_{\cO_X} \cE^{*m} \otimes_{\cO_X} \cT_0 & m \colon \mbox{odd},
\end{cases}
\end{align}
so that
\begin{align}
 \bS(\cT_0^{-1} \otimes_{\cO_X} \cE \otimes_{\cO_X} \cT_1)
  \simeq \bS(\cE)_{\cT},
\end{align}
where $\cT = (\cT_i)_{i \in \bZ}$ is defined by
\begin{align}
 \cT_i \coloneqq
\begin{cases}
 \cT_0 & i : \text{even}, \\
 \cT_1 & i : \text{odd} \\
\end{cases}
\end{align}
(see also \cite[p. 136]{MR2513212}).

\begin{example}\label{eg:invertible_sheaf_bimodules}
\begin{enumerate}
\item\label{it:automorphism_as_invertible_sheaf_bimodules}
An example of an invertible sheaf bimodule
is given by the structure sheaf $\cO_{\Gamma_g}$
of the graph
\begin{align}
 \Gamma_g \coloneqq \lc (x, g x) \in X \times X \relmid x \in X
 \rc
\end{align}
of $g \in \Aut X$.
One can easily check
$
 (-) \otimes _{ \cO _{ X } } \iota _{ * } \cO_{\Gamma_g} (-)
 \simeq
 g _{ * }
$.

For any pair
$
 \lb g, h \rb
$
of automorphisms on
$
 X
$,
consider the pair of invertible sheaf bimodules
$
 (\cT_0, \cT_1) = \lb \cO _{ \Gamma _{ g } }, \cO _{ \Gamma _{ h } } \rb
$.
One can easily check
\begin{equation}
 \cT_0^{-1} \otimes_{\cO_X} \cE \otimes_{\cO_X} \cT_1
 \simeq
 \lb g \times h \rb _{ * } \cE.
\end{equation}
By the arguments above, this sheaf bimodule gives rise to the equivalent category as
$
 \cE
$
does. In short,
$
 \Aut X \times \Aut X
$
acts on the space of sheaf bimodules preserving
the equivalence classes of the resulting abelian categories $\qgr \bS(\cE)$.

\item\label{it:line_bundle_as_invertible_sheaf_bimodules}
Another example is given by the line bundle
$
 \Delta _{ * } L
$
on the diagonal of
$
 X \times X
$.
Consider the pair of invertible sheaf bimodules
$
 (\cT_0, \cT_1)
 =
 \lb \Delta _{ * } L, \Delta _{ * } M  \rb
$.
By standard arguments one can verify
\begin{align}
 \cT_0^{-1} \otimes_{\cO_X} \cE \otimes_{\cO_X} \cT_1
 \simeq
 \cE \otimes _{ \cO _{ X \times X } } \lb L ^{ - 1 } \boxtimes M \rb.
\end{align}
In particular, when
$
 X = \X
$
and
$
 \lb L, M \rb
 =
 \lb \cO _{ \X } ( a ), \cO _{ \X } ( b ) \rb
$,
then
\begin{equation}
 \cT_0^{-1} \otimes_{ \cO _{ \X } } \cE \otimes_{ \cO _{ \X } } \cT_1
 \simeq
 \cE \otimes _{ \cO _{ \X \times \X } } \cO _{ \X \times \X } \lb - a, b \rb.
\end{equation}

Note that the line bundle
$
 \cL \coloneqq u ^{ * } \cO _{ \X } ( - 1 ) \otimes v ^{ * } \cO _{ \X } ( 1 )
$
is non-trivial of degree zero on
$
 W
$, unless
$
 \Wred
$
is a divisor of bidegree
$
 ( 1, 1 )
$.
This implies that there exists the action
$
 \cE = \iota _{ * } \cU \mapsto \iota _{ * } \lb \cU \otimes \cL \rb
$
on the space of sheaf bimodules of fixed degree.
This is an effective action of
$
 \bZ
$
if
$
 \cL \in \Pic ^{ 0 } \lb W \rb
$
is not a torsion point.
\end{enumerate}
\end{example}
Combining these examples with the isomorphism
$
 \Auteq \lb \coh X \rb \simeq \Pic \lb X \rb \rtimes \Aut X
$,
it follows that the natural action of the group
$
 \Auteq \lb \coh X \rb \times \Auteq \lb \coh X \rb
$
on sheaf bimodules preserves the equivalence classes of the associated categories
$
 \Gr \bS ( - )
$
and
$
 \Qgr \bS ( - )
$.

\section{Explicit classification of sheaf bimodules on $\X$}
 \label{sc:sbc}

In this section,
we give an explicit classification of locally free sheaf bimodules of rank 2 on $\X$.
The first part of this section has overlap with \cite[Chapter 4]{thesis:Patrick}, especially with \cite[Theorem 4.5]{thesis:Patrick}. A difference is in that our argument is more abstract and based on the Serre's conditions.

Also we explicitly compute the values of
$
 a, b, a ', b '
$
which are defined by
$
 \cO _{ \X } \otimes \cE
 =
 v _{ * } \cU
 =
 \cO _{ \X } ( a ) \oplus \cO _{ \X } ( b )
$
and
$
 \cO _{ \X } ( - 1 ) \otimes \cE
 =
 \cO _{ \X } ( a ' ) \oplus \cO _{ \X } ( b ' )
$.
This will later be used to check the strongness of certain exceptional collections on
$
 D ^{ b } \qgr \bS \lb \cE \rb
$
and compute its endomorphism algebra.

Let $\cE$ be a locally free sheaf bimodule
on smooth projective schemes $X$ and $Y$
of the same dimension $n$.
Since
$
 u _{ * } \cU
$
is locally free and
$
 X
$
is smooth over a field,
$
 u _{ * } \cU
$
is maximally Cohen-Macaulay.
Since
$
 u
$
is finite, this implies that
$
 \cU
$
is also maximally Cohen-Macaulay over
$
 W
$
(this argument is quoted from the proof of \cite[Proposition 3.1.6]{MR2958936})
and
$
 \cE = \iota _{ * } \cU
$
is a (non-maximal) Cohen-Macaulay module
over $X \times Y$.
Hence by \cite[Proposition 1.1.10]{MR2665168},
the sheaf $\cE$ is \emph{pure},
i.e., all associated points of $\cE$ have the same dimension;
equivalently, non-trivial subsheaves of
$
 \cE
$
have the same dimension as
$
 \cE
$
(see \cite[p.~3]{MR2665168}).
It follows that
$W$ is an equi-$n$-dimensional scheme and
the restriction of
$
 u \colon W \to X
$
to any irreducible component of $W$
is dominant and finite.
Note that here and below,
any claim on $u$ also holds for $v$
by symmetry.

\pref{lm:S1_and_purity} below shows
that the scheme $W$ satisfies
the $S_1$ condition of Serre.

\begin{lemma}\label{lm:S1_and_purity}
Let $A$ be a Noetherian ring and $M$ a finitely generated $A$-module which satisfies the
$
 S _{ 1 }
$
condition and
$
 \ann _{ A } \lb M \rb = 0
$.
Then $A$ also satisfies the
$
 S _{ 1 }
$
condition.
In particular, $A$ is pure as a module over $A$.
\end{lemma}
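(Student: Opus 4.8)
The plan is to reformulate the $S_1$ condition in terms of associated primes and then exploit the faithfulness $\ann_A(M) = 0$. Recall that a finitely generated module $N$ over a Noetherian ring satisfies $S_1$ exactly when it has no embedded associated primes, i.e. $\Ass(N) \subseteq \operatorname{Min}(\operatorname{Supp} N)$. Since $\ann_A(M) = 0$ gives $\operatorname{Supp}(M) = \Spec A$, the hypothesis that $M$ is $S_1$ reads $\Ass(M) \subseteq \operatorname{Min}(A)$, where $\operatorname{Min}(A)$ is the set of minimal primes of $A$; the goal, that $A$ is $S_1$, likewise reads $\Ass(A) \subseteq \operatorname{Min}(A)$. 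As minimal primes are always associated, it therefore suffices to prove the single inclusion
\[ \Ass(A) \subseteq \Ass(M). \]

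First I would establish this inclusion, which is where faithfulness enters and which I expect to be the only substantive step. The slick route uses the multiplication homomorphism $A \to \operatorname{End}_A(M)$, whose kernel is $\ann_A(M) = 0$, so that $A$ embeds as an $A$-submodule of $\Hom_A(M, M)$. Taking associated primes of a submodule and invoking the standard identity $\Ass(\Hom_A(M, N)) = \operatorname{Supp}(M) \cap \Ass(N)$ with $N = M$ gives $\Ass(A) \subseteq \Ass(\Hom_A(M,M)) = \operatorname{Supp}(M) \cap \Ass(M) = \Ass(M)$. Alternatively, and more elementarily, one argues prime by prime: for $\mathfrak{p} \in \Ass(A)$ one localizes at $\mathfrak{p}$, using that $\Ass$ and $\ann$ both commute with localization, so that $M_\mathfrak{p}$ is a nonzero faithful $A_\mathfrak{p}$-module and $\operatorname{depth} A_\mathfrak{p} = 0$. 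A socle computation then finishes: a nonzero socle element $a$ of $A_\mathfrak{p}$ satisfies $a M_\mathfrak{p} \neq 0$ by faithfulness, and any $m$ with $am \neq 0$ gives a nonzero element $am$ killed by $\mathfrak{p} A_\mathfrak{p}$, whence $\operatorname{depth} M_\mathfrak{p} = 0$ and $\mathfrak{p} \in \Ass(M)$.

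Combining the inclusion with $\Ass(M) \subseteq \operatorname{Min}(A)$ yields $\Ass(A) \subseteq \operatorname{Min}(A)$, so $A$ has no embedded primes and satisfies $S_1$. For the final assertion, the equality $\Ass(A) = \operatorname{Min}(A)$ means every associated point of $A$ is minimal; once $\Spec A$ is equidimensional — as it is for the scheme $W$ to which the lemma is applied, since $W$ was shown to be equi-$n$-dimensional — every nonzero ideal has associated primes of the maximal dimension $\dim A$, which is precisely the statement that $A$ is pure as a module over itself.

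The only genuine obstacle is the inclusion $\Ass(A) \subseteq \Ass(M)$; everything else is formal bookkeeping with associated primes and depth. In the slick proof this reduces to correctly citing the behaviour of $\Ass$ under $\Hom$, while in the hands-on proof the crux is the socle argument, which is exactly where the faithfulness hypothesis $\ann_A(M) = 0$ is indispensable: without it $\operatorname{Supp}(M)$ could be a proper closed subset of $\Spec A$, and an embedded prime of $A$ lying outside $\operatorname{Supp}(M)$ would never be detected by $M$, so the conclusion would fail.
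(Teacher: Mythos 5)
Your proof is correct, and its primary route is genuinely different from the paper's. The paper localizes at a prime, uses the $S_1$ hypothesis to produce an $M$-regular element $a \in \frakm$ (when $\dim A \ge 1$), and observes that faithfulness forces $a$ to be $A$-regular: if $ab=0$ with $b \neq 0$ then $bM \neq 0$ yet $a \cdot (bM) = 0$, contradicting $M$-regularity. That two-line depth argument is exactly your second, ``hands-on'' route read in contrapositive form (your socle computation is the same use of faithfulness, just run from $\operatorname{depth} A_{\frakp} = 0$ towards $\operatorname{depth} M_{\frakp} = 0$ instead of the other way). Your first route, via the embedding $A \hookrightarrow \End_A(M)$ and the identity $\Ass(\cHom_A(M,N)) = \Supp(M) \cap \Ass(N)$, is a genuinely different key lemma; it buys a cleaner global statement ($\Ass(A) \subseteq \Ass(M)$ in one stroke, no localization) at the cost of citing a less elementary fact, whereas the paper's argument is self-contained. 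One further point in your favour: for the ``in particular'' clause you correctly note that passing from $S_1$ (no embedded primes) to purity in the sense of \cite{MR2665168} (all associated points of the same dimension) requires equidimensionality of $\Spec A$, which holds for $W$ because it was established earlier that every irreducible component of $W$ dominates $X$; the paper's proof asserts the equivalence of $S_1$ and purity by appeal to \cite[Proposition 1.1.10]{MR2665168} and a remark on $S_{k,c}$ versus $S_k$, which implicitly relies on the same equidimensionality. Your handling of that step is the more careful of the two.
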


\begin{proof}
All the properties discussed here are local, so we may assume without loss of generality that
$
 A = ( A, \frakm )
$
is a local ring. Since $M$ satisfies the
$
 S _{ 1 }
$
condition, there exists a non-zero element $a \in \frakm$ such that
$
 M \xto[]{a \cdot} M
$
is injective. Then the assumptions imply that
$
 A \xto[]{a \cdot} A
$
is also injective. Thus we obtain the first claim.

The second claim simply follows from the fact that the condition
$
 S _{ 1 }
$
is equivalent to the purity;
this is essentially stated in
\cite[Proposition 1.1.10]{MR2665168} modulo the simple observation that
a coherent sheaf on a smooth projective scheme whose support is of codimension
$
 c
$
satisfies the
$
 S _{ k, c }
$
condition precisely when it satisfies
$
 S _{ k } = S _{ k, 0 }
$
on its support.
\begin{comment}
In fact, suppose that there exists
$
 0 \neq b \in B
$
such that
$
 b \varphi ( a ) = 0
$.
By assumption
$
 0 \neq b M \subset M
$,
but then
$
 a \cdot ( b M )
 =
 ( \varphi ( a ) b ) M
 =
 0
$
and it contradicts the regularity of
$
 a
$.
\end{comment}
\end{proof}

Since the reducedness of a scheme is equivalent to
$S_1 + R_0$
($=$ 
regular in codimension $0$),
we see that $W$ is reduced
if and only if it is reduced at the generic point of any irreducible component.

\begin{proposition}\label{pr:classification_of_sheaf_bimodules}
If the rank of
$
 \cE
$
is $2$,
then one of the following cases occur.
\begin{enumerate}
\item
$
 W
$
is an irreducible and reduced ($\iff$ integral) scheme.
Then either
\begin{enumerate}
 \item
 $
  u
 $
 is birational. In this case, since
 $
  X
 $
 is normal,
 $
  u
 $
 is an isomorphism. In particular,
 $
  \cU
 $
 is a locally free sheaf of rank $2$ on
 $
  X \simeq W
 $, or

 \item
 $
  u
 $
 is of degree $2$ and $\cU$ is a pure sheaf of rank $1$ on $W$.
 Then by the assumption
 $ \rank \lb v _{ * } \cU \rb = 2 $,
 the degree of $v$ has to be $2$ as well.
\end{enumerate}

 \item
$
 W
$
is irreducible and not reduced.
In this case, by the arguments above,
$
 W
$
is not reduced at the generic point. Therefore
$
 \Wred \to X
$
and
$
 \Wred \to Y
$
are both birational and finite, and hence are isomorphisms.
The sheaf
$
 \cU
$
is pure, and isomorphic to
$
 \cO _{ W }
$
on a non-empty Zariski open subset of $W$.

\item
$
 W
$
is not irreducible.
In this case, $W$ is reduced and admits exactly 2 irreducible components.
$u$ sends each component of $W$ onto $X$ (so does $v$ onto $Y$).
The sheaf
$
 \cU
$
is pure and of rank $1$ on each irreducible component.
\end{enumerate}
\end{proposition}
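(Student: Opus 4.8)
The plan is to reduce the entire statement to a linear-algebra problem over the generic point of $X$ (equivalently of $Y$), extracting everything from two facts that are already available: that $u_* \cU$ has rank $2$, and that $\cU$ is a \emph{faithful} $\cO_W$-module. The latter holds because, by the very definition of $W$, the sheaf $\cO_W$ is the image of $\cO_{X \times Y}$ inside $\cEnd \cE$, so that $\ann_{\cO_W} \cU = 0$; this is exactly the hypothesis $\ann_A(M) = 0$ of \pref{lm:S1_and_purity}. In particular the support of $\cU$ is all of $W$, which will let me see that no component of $W$ is invisible to $\cU$.

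First I would localize at the generic point $\eta_X$ of $X$. The scheme-theoretic fibre of the finite morphism $u$ is $\Spec B$ for an Artinian $\kappa(X)$-algebra $B$, which decomposes as $B = \prod_i B_i$ into local Artinian factors corresponding bijectively to the irreducible components $W_i$ of $W$; here $\dim_{\kappa(X)} B_i = m_i \cdot [\kappa(\xi_i) : \kappa(X)]$, where $\xi_i$ is the generic point of $W_i$ and $m_i$ its multiplicity in $W$. Correspondingly $\cU$ restricts to a $B$-module $U = \bigoplus_i U_i$, with each $U_i$ faithful over $B_i$ (faithfulness survives the flat base change to $\kappa(X)$) and with $\sum_i \dim_{\kappa(X)} U_i = \rank(u_* \cU) = 2$. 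Since $\text{Supp}\,\cU = W$, each $U_i \neq 0$, so each summand has $\kappa(X)$-dimension $\ge 1$; hence $W$ has at most two components. This cleanly separates the irreducible case (one factor, $\dim_{\kappa(X)} U = 2$) from the reducible case (two factors, each of dimension $1$).

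I would then dispatch the two-component case. Each $U_i$ is a faithful $B_i$-module of $\kappa(X)$-dimension $1$; since $U_i/\frakm_i U_i \neq 0$ forces $[\kappa(\xi_i):\kappa(X)] \le \dim_{\kappa(X)} U_i = 1$, the map $u|_{W_i}$ is birational, and then faithfulness $\ann_{B_i} U_i = 0$ forces $B_i$ to be a field, i.e. $m_i = 1$. Thus $W$ is reduced at every generic point; combined with the $S_1$ property from \pref{lm:S1_and_purity} and the criterion reduced $\iff S_1 + R_0$, this shows $W$ is reduced with exactly two components, on each of which $\cU$ has rank $1$, and surjectivity of $u$ (resp.\ $v$) onto $X$ (resp.\ $Y$) follows since each component is finite and dominant over an integral base. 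This is case (3).

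For the irreducible case I would analyse the single faithful $B_1$-module $U_1$ with $\dim_{\kappa(X)} U_1 = 2$; as before $[\kappa(\xi_1):\kappa(X)] \le 2$. If $B_1$ is a field ($W$ reduced), the two values $[\kappa(\xi_1):\kappa(X)] \in \{1,2\}$ give case 1(a) — where $u$ is birational, hence an isomorphism by normality of $X$, and $\cU$ is locally free of rank $2$ because it is maximal Cohen--Macaulay over the regular scheme $W \cong X$ — and case 1(b), where the generic rank of $\cU$ over the integral scheme $W$ is $1$, so the relation $\rank(v_* \cU) = \deg v$ forces $\deg v = 2$. If instead $B_1$ is not a field (case (2)), faithfulness rules out $[\kappa(\xi_1):\kappa(X)] = 2$, since that would force $\frakm_1 U_1 = 0$ and hence $\frakm_1 \subseteq \ann_{B_1} U_1 = 0$; therefore $\Wred \to X$ is birational and finite, hence an isomorphism by normality, and likewise over $Y$. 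I expect the genuinely delicate step to be the last claim, that $\cU \cong \cO_W$ on a dense open: here I would use that faithfulness embeds the commutative local algebra $B_1$ into $\operatorname{End}_{\kappa(X)}(U_1) \cong M_2(\kappa(X))$, and a commutative subalgebra of $M_2$ has $\kappa(X)$-dimension at most $2$; together with $\dim_{\kappa(X)} B_1 \ge 2$ this yields $B_1 \cong \kappa(X)[\varepsilon]/(\varepsilon^2)$ and $U_1 \cong B_1$. Apart from this matrix-algebra dimension bound and the bookkeeping of multiplicities against residue-field degrees, the argument is formal.
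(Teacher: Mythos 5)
Your argument is correct and follows essentially the same route as the paper: the trichotomy is extracted by counting ranks at the generic point of $X$ against the faithfulness of $\cU$ over $\cO_W$ (which is the hypothesis $\ann_A(M)=0$ of \pref{lm:S1_and_purity}) together with the purity/$S_1$ discussion preceding the proposition, which is exactly what the paper's inline justifications ("by the arguments above") rely on. The only place you go beyond the paper's text is the final claim of case (2), where your bound on commutative subalgebras of $M_2(\kappa(X))$ gives a direct proof that $\cU\simeq\cO_W$ generically, a statement the paper asserts there and only later substantiates via the local structure result \pref{lm:local_str_nonred}.
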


Let us assume that $X$ and $Y$ are smooth projective curves.
As shown in \pref{lm:S1_and_purity} there exists no embedded point in
$
 W
$.
We also checked that any irreducible component of
$
 W
$
dominates the generic point of
$
 X
$.
Therefore
\cite[Chapter III, Proposition 9.7]{MR0463157}
implies:

%

\begin{corollary}
$
 u \colon W \to X
$
and
$
 v \colon W \to Y
$
are flat.
\end{corollary}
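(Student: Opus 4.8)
The plan is to apply the flatness criterion over a one-dimensional regular base, \cite[Chapter III, Proposition 9.7]{MR0463157}. Since $X$ is a smooth projective curve, its local rings at closed points are discrete valuation rings, over which flatness of a finitely generated module is equivalent to torsion-freeness; the cited proposition packages this into the statement that a closed subscheme of $\bP^n_X$ is flat over $X$ if and only if every associated point of the subscheme is mapped to the generic point of $X$.

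First I would note that $u \colon W \to X$ is finite, hence projective, so $W$ can be realized as a closed subscheme of $\bP^n_X$ for some $n$ and the cited proposition applies to it. It therefore remains to verify that every associated point of $W$ is sent by $u$ to the generic point of $X$.

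This is exactly what the preceding analysis provides. By \pref{lm:S1_and_purity} the scheme $W$ satisfies the $S_1$ condition, so it has no embedded points; consequently its associated points are precisely the generic points of its irreducible components. On the other hand, we have already observed, using purity, that the restriction of $u$ to each irreducible component of $W$ is dominant, i.e.\ each such generic point is mapped to the generic point of $X$. Combining these two facts, every associated point of $W$ lies over the generic point of $X$, so \cite[Chapter III, Proposition 9.7]{MR0463157} yields the flatness of $u$. The flatness of $v$ follows by the symmetry between $(X, u)$ and $(Y, v)$ noted above.

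In effect there is no genuine obstacle remaining at this stage: the substantive content, namely the absence of embedded points and the dominance of $u$ on each component, was already extracted in \pref{lm:S1_and_purity} and \pref{pr:classification_of_sheaf_bimodules}, and the corollary is a formal consequence of the cited criterion. The only point requiring a moment's care is that purity (equivalently $S_1$) is genuinely indispensable here: were $W$ to carry an embedded point, it could a priori lie over a closed point of $X$ and obstruct flatness, so the reduction to the generic points of the components is precisely where the earlier structural results are used.
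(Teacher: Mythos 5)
Your proposal is correct and follows essentially the same route as the paper: the authors deduce the corollary from \cite[Chapter III, Proposition 9.7]{MR0463157} using exactly the two facts you cite, namely that $W$ has no embedded points by \pref{lm:S1_and_purity} and that every irreducible component of $W$ dominates $X$. Your additional remark that $u$ is finite, hence $W$ embeds as a closed subscheme of some $\bP^n_X$ so the cited criterion applies, is a minor amplification of the same argument.
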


One can also show:

\begin{corollary} \label{cr:Cartier}
$W$ is isomorphic to a Cartier divisor of $X \times Y$.
\end{corollary}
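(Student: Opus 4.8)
The plan is to prove the stronger statement that the ideal sheaf $\cI _{ W } \subset \cO _{ X \times Y }$ is invertible, which is precisely the condition for $W$ to be an effective Cartier divisor. Since $X$ and $Y$ are smooth projective curves, the surface $X \times Y$ is a regular scheme of dimension $2$, and we have already established two facts about $W$: it is equidimensional of dimension $1$ (every irreducible component dominates $X$, hence maps finitely onto it), and it satisfies Serre's condition $S _{ 1 }$ by \pref{lm:S1_and_purity}. The first observation I would record is that for a scheme of pure dimension $1$ the condition $S _{ 1 }$ coincides with being Cohen--Macaulay: at a closed point $p \in W$ one has $\dim \cO _{ W, p } = 1$, so $S _{ 1 }$ forces $\operatorname{depth} \cO _{ W, p } \geq 1 = \dim \cO _{ W, p }$, while at the finitely many generic points the local rings are Artinian and automatically Cohen--Macaulay. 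Thus $\cO _{ W }$ is Cohen--Macaulay.

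Next I would verify invertibility pointwise. Fix $p \in W$ and set $R \coloneqq \cO _{ X \times Y, p }$, a regular local ring, and write $\cO _{ W, p } = R / I$. Cohen--Macaulayness gives $\operatorname{depth} _{ R } ( R / I ) = \dim ( R / I ) = \dim R - 1$, so the Auslander--Buchsbaum formula yields $\operatorname{pd} _{ R } ( R / I ) = 1$. A cyclic module of projective dimension $1$ admits a free resolution $0 \to R ^{ m } \to R \to R / I \to 0$ whose kernel is $I$; since $R$ is a domain and $R / I$ is torsion (its support is a proper closed subscheme), tensoring with the fraction field of $R$ forces $m = 1$, so $I$ is free of rank one, i.e. a principal ideal generated by a non-zero-divisor. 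Hence $\cI _{ W }$ is locally principal, therefore invertible, and $W$ is an effective Cartier divisor of $X \times Y$.

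The step that requires the most care is the identification of $S _{ 1 }$ with Cohen--Macaulayness, which breaks down if $W$ fails to be everywhere of dimension exactly $1$; this is exactly where the earlier fact that every component of $W$ dominates the curve $X$ is essential, as it rules out isolated points or components of lower dimension. I expect no serious difficulty beyond this. If one prefers to avoid the homological input, the same conclusion can be reached using that the two-dimensional regular local ring $R$ is a unique factorization domain: purity of $W$ ensures that $I$ has no embedded primes, so its primary decomposition consists of height-one primary ideals, each of which is a power $( f ^{ n } )$ of a principal prime $( f )$, and $I$ is then the product of these, hence principal.
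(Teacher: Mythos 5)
Your proof is correct, and it reaches the conclusion by a genuinely different (though ultimately dual) route from the paper. The paper's proof is global and homological on the side of the ideal sheaf: it applies $\cExt^{\bullet}_{X \times Y}(-, \omega_{X \times Y})$ to the sequence $0 \to I_W \to \cO_{X \times Y} \to \iota_*\cO_W \to 0$, uses the $S_{1,1}$ condition on $\iota_*\cO_W$ (via \cite[Proposition 1.1.10]{MR2665168}) to kill $\cExt^2(\iota_*\cO_W, \omega_{X \times Y})$, deduces $\cExt^q(I_W, \omega_{X\times Y}) = 0$ for $q = 1,2$, and concludes that $I_W$ is reflexive, hence invertible since it has rank one on a smooth surface. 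You instead work locally on the side of $\cO_W$: equidimensionality plus $S_1$ gives Cohen--Macaulayness of the one-dimensional scheme $W$, Auslander--Buchsbaum then gives $\pd_R(R/I) = 1$ over the regular local ring $R = \cO_{X\times Y, p}$, and a rank count identifies $I$ as free of rank one. Both arguments rest on exactly the same inputs (Lemma \ref{lm:S1_and_purity} and the smoothness of $X \times Y$), and indeed the Ext-vanishing criterion the paper invokes is the local-duality translation of your depth condition; your version is more elementary and self-contained, while the paper's fits the reflexivity formalism of \cite{MR2665168} that it has already set up for the Cohen--Macaulay dual $(-)^D$. One small remark on your alternative UFD argument: the step "each height-one primary component is a power of a principal prime" is correct but deserves the one-line justification that $P$-primary ideals are in bijection with the ideals of the DVR $R_P$, hence are the ideals $(f^n)$; and the passage from the intersection of the components to their product uses that the $f_i$ are pairwise non-associate primes. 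With those clarifications both of your arguments are complete.
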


\begin{proof}
Consider the standard short exact sequence
\begin{equation}\label{eq:Cartier}
 0 \to I _{ W } \to \cO _{ X \times Y } \to \iota _{ * } \cO _{ W } \to 0.
\end{equation}
For any coherent sheaf
$
 E
$
on
$
 X \times Y
$,
there exists the canonical map to the \emph{double dual}
\begin{equation}
 \theta _{ E } \colon E \to E ^{ D D },
\end{equation}
where
$
 ( - ) ^{ D }
$
is the Cohen-Macaulay dual as defined in \eqref{eq:CM_dual}.
Then
$
 E
$
is said to be \emph{reflexive} if
$
 \theta _{ E }
$
is an isomorphism. Note that
$
 W
$
is a Cartier divisor if and only if
$
 I _{ W }
$
is an invertible sheaf.
This is, in turn, equivalent to the condition that
$
 I _{ W }
$
is reflexive in the above sense.

Apply the functor
$
 \cExt ^{ i } _{ X \times Y } \lb -, \omega _{ X \times Y } \rb
$
to
\pref{eq:Cartier}
to obtain the following long exact sequence:
\begin{equation}
 \begin{split}
 \cExt ^{ 1 } _{ X \times Y } \lb \iota _{ * } \cO _{ W }, \omega _{ X \times Y } \rb
 \to
 \cExt ^{ 1 } _{ X \times Y } \lb \cO _{ X \times Y }, \omega _{ X \times Y } \rb
 = 0
 \to
 \cExt ^{ 1 } _{ X \times Y } \lb I _{ W }, \omega _{ X \times Y } \rb\\
 \to
 \cExt ^{ 2 } _{ X \times Y } \lb \iota _{ * } \cO _{ W }, \omega _{ X \times Y } \rb
 \to
 \cExt ^{ 2 } _{ X \times Y } \lb \cO _{ X \times Y }, \omega _{ X \times Y } \rb
 =
 0
 \to
 \cExt ^{ 2 } _{ X \times Y } \lb I _{ W }, \omega _{ X \times Y } \rb\\
 \to
 0.
 \end{split}
\end{equation}
Since
$
 \iota _{ * } \cO _{ W }
$
satisfies
$
 S _{ 1, 1 }
$,
$
  \cExt ^{ 2 } _{ X \times Y } \lb \iota _{ * } \cO _{ W }, \omega _{ X \times Y } \rb
  =
  0
$
by
\cite[Proposition 1.1.10]{MR2665168}.
Hence
$
 \cExt ^{ q } _{ X \times Y } \lb I _{ W }, \omega _{ X \times Y } \rb
 =
 0 
$
for
$
 q = 1, 2
$,
which in turn implies
$
 I _{ W }
$
is reflexive again by
\cite[Proposition 1.1.10]{MR2665168}.
\end{proof}

%

%
%



From now on,
we will restrict ourselves to the case when
$
 X = Y = \X
$
and
$
 \cE
$
is a locally free sheaf bimodule of rank 2.
As we proved in \pref{pr:classification_of_sheaf_bimodules},
such a sheaf bimodule is either of the following two types
(we borrow the labels from \cite[Definition 6.2.1]{MR2958936}):
\begin{enumerate}[(I)]
 \item
 \label{it:type_1_1}
$W$ is a divisor of bidegree $(1,1)$,
and $\cU$ is a locally free sheaf on $W$ of rank 2.

 \item
 \label{it:type_2_2}
$W$ is a divisor of bidegree $(2,2)$,
and $\cU$ is a pure sheaf on $W$ of rank 1.
\end{enumerate}
Since
\begin{itemize}
 \item
a divisor of bidegree $(1,1)$ in $\X \times \X$ is isomorphic to
$
 \X
$
and transferred to the diagonal by an automorphism of $\X \times \X$,
 \item
a pure sheaf on a smooth curve is locally free, and
 \item
a locally free sheaf of rank 2 on $\X$ is isomorphic to
$\cO_\X(a) \oplus \cO_\X(b)$
for some $a, b \in \bZ$
by Birkhoff--Grothendieck theorem,
\end{itemize}
a noncommutative Hirzebruch surface associated to a sheaf bimodule of type \eqref{it:type_1_1}
is equivalent to a commutative Hirzebruch surface by \pref{eg:invertible_sheaf_bimodules} \eqref{it:automorphism_as_invertible_sheaf_bimodules}.
Hence in the rest of this section we may and will consider sheaf bimodules of type \eqref{it:type_2_2}.
Since
$
 \cO_\X \otimes_{\cO_\X} \cE \simeq v _{ * } \cU
$
is a locally free sheaf of rank 2 on $\X$,
there exist integers $a \le b \in \bZ$ such that
\begin{align}\label{eq:decomposition_of_v*U}
 v _{ * } \cU \simeq \cO_\X(a) \oplus \cO_\X(b).
\end{align}

Since $W$ is a divisor of bidegree $(2,2)$ in $\X \times \X$,
the dualizing sheaf of $W$ is trivial, so that the Serre duality theorem implies
\begin{align}\label{eq:Serre_duality_for_embedded_curves}
 h^1(\cU) = h^0(\cU^\vee)
\end{align}
for any locally free $\cO_W$-module $\cU$.
Also, it follows from the Riemann-Roch theorem
\cite[Chapter II, Theorem (3.1)]{MR2030225} that
\begin{align}\label{eq:Riemann-Roch_for_embedded_curves}
 \chi \lb \cU \rb = h^0(\cU) - h^1(\cU) = \rank \cU \cdot \deg \cU,
\end{align}
where
$
 \deg \cU
 \coloneqq
 m \cdot \deg \lb \cU | _{ \Wred } \rb
$
when
$
 W = m \Wred
$
as Weil divisors on
$
 \X \times \X
$.
As an immediate corollary, we easily obtain the formula
\begin{align}\label{eq:consequence_from_the_euler_number}
 \deg \cU
 =
 \chi \lb \cU \rb
 =
 \chi \lb v _{ * } \cU \rb
 =
 a + b + 2
\end{align}

\subsection{Non-reduced $W$}
Suppose that $W$ is not reduced.
Then the reduced subscheme $\Wred$ is a divisor of bidegree $(1,1)$,
which we may and will assume
$
 \Wred = \Delta _{ \X }
$
without changing the abelian category by \pref{eg:invertible_sheaf_bimodules} \eqref{it:automorphism_as_invertible_sheaf_bimodules}.

\begin{theorem} \label{th:sheaf_bimodule_non-reduced}
The sheaf $\cU$ on $W$ sits in an exact sequence of the following form
\begin{align}\label{eq:sheaf_bimodule_non-reduced}
 0
 \to
 \cU
 \to
 \cL
 \xto[]{c}
 \cL \otimes \cO _{ D } \simeq \cO _{ D }
 \to
 0,
\end{align}
where
\begin{itemize}
\item
$
 \cL
$
is an invertible sheaf on
$
 W
$,

\item
$
 D = \sum _{ i = 1 } ^{ N } n _{ i } x _{ i } \subset \Wred \subset W
$
is a $0$-dimensional closed subscheme of $W$, where
$
 N \in \bZ _{ \ge 0 }
$,
$
 x _{ 1 }, \dots, x _{ N } \in \Wred
$
are distinct closed points, and
$
 n _{ 1 }, \dots, n _{ N } \in \bZ _{ > 0 }
$.

\item
$c$ is the restriction morphism.
\end{itemize}
Conversely, any such sheaf $\cU$ on $W$, regarded as a sheaf on $\X \times \X$, is a locally free sheaf bimodule of rank 2 whose support coincides with $W$.
\end{theorem}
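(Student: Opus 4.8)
The plan is to prove the two implications separately; the converse direction is essentially formal, while the forward direction carries the geometric content.

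For the converse, suppose we are given an invertible sheaf $\cL$ on $W$ together with a finite subscheme $D = \sum_i n_i x_i \subset \Wred$, and set $\cU \coloneqq \ker c$. Since $\cL$ is invertible and $D$ is $0$-dimensional, tensoring the structure sequence $0 \to \cI_D \to \cO_W \to \cO_D \to 0$ of $D \subset W$ by $\cL$ shows $\cU \simeq \cL \otimes_{\cO_W} \cI_D$ and identifies $\cL \otimes \cO_D$ with $\cO_D$; in particular $\cU$ agrees with $\cL$ at the generic point of $W$, so it is pure of rank $1$ with support exactly $W$. To see that $\cU$ is a locally free sheaf bimodule of rank $2$, recall from the corollary that $v \colon W \to \X$ is finite and flat, necessarily of degree $2$ since $W$ has bidegree $(2,2)$; hence $v_*$ is exact and $v_* \cL$ is locally free of rank $2$. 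Applying $v_*$ to \eqref{eq:sheaf_bimodule_non-reduced} realizes $v_* \cU$ as the kernel of a surjection $v_* \cL \twoheadrightarrow v_* \cO_D$ onto a torsion sheaf, so $v_* \cU$ is a torsion-free, hence locally free, sheaf of rank $2$ on the smooth curve $\X$; the same argument for $u$ finishes this direction.

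For the forward direction I would first fix the local geometry of $W$. Being a Cartier divisor in the smooth surface $\X \times \X$ (\pref{cr:Cartier}) that is everywhere non-reduced with $\Wred = \Delta_\X \simeq \X$, the scheme $W$ is a ribbon over $\X$: it is Gorenstein, the ideal $\cI_{\Wred/W}$ is a square-zero invertible $\cO_{\Wred}$-module (equal to $\omega_{\Wred} \simeq \cO_{\Wred}(-2)$ by the bidegree computation), and the completed local ring at each closed point $p \in \Wred$ is $\widehat{\cO}_{W,p} \simeq A[\epsilon]/(\epsilon^2)$ with $A = \widehat{\cO}_{\Wred,p}$ a complete discrete valuation ring with uniformizer $x$. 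Purity of $\cU$, established through \pref{lm:S1_and_purity}, means precisely that $x$ is a nonzerodivisor on the stalk $M \coloneqq \widehat{\cU}_p$, so that $M$ is free over $A$, of rank $2$ because $\cU$ has rank $1$ over $W$.

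The heart of the proof is the local enlargement of $M$ to an invertible module with cyclic cokernel. Writing $Q = \Fract(A)[\epsilon]/(\epsilon^2)$ and $M \subset Q$, one analyses $M$ through $M \cap \epsilon Q$ and the image of $M$ in $Q/\epsilon Q$, both fractional ideals of the DVR $A$, and chooses generators so that
\begin{align}
 M = A\,x^{-s}\epsilon + A\,x^{-r}(1 + c\,\epsilon), \qquad s \ge r,\ c \in \Fract(A).
\end{align}
Then $\cL_p \coloneqq x^{-s}(1 + c\,\epsilon)\cdot A[\epsilon]/(\epsilon^2)$ is a free, hence invertible, submodule of $Q$ containing $M$, and the quotient $\cL_p / M \simeq A/x^{s-r}A$ is cyclic and annihilated by $\epsilon$, i.e. it is the structure sheaf of the divisor $(s-r)\,p$ on $\Wred$. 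Since $\cU$ is already invertible away from the finitely many points where $s > r$, one glues these pointwise enlargements inside the constant sheaf $\cU \otimes_{\cO_W} \mathcal{K}_W$ to a genuine invertible subsheaf $\cL \supseteq \cU$ on all of $W$ with $\cL/\cU \simeq \cO_D$ for $D = \sum_p (s_p - r_p)\,p \subset \Wred$; by construction the induced map $c \colon \cL \to \cL \otimes \cO_D \simeq \cO_D$ is the restriction morphism, yielding \eqref{eq:sheaf_bimodule_non-reduced}.

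I expect the main obstacle to be exactly this local-to-global step: producing a cokernel of the special cyclic shape $\cO_D$ rather than an arbitrary finite-length sheaf, which forces the normal form above together with the verification that $\cL_p/M$ is killed by $\epsilon$, and then checking that the chosen invertible hulls are compatible enough to glue to a single invertible sheaf $\cL$ on the non-reduced scheme $W$. The subtle conceptual point is that $\cU$, being a maximal Cohen--Macaulay module over the Gorenstein curve $W$, is already reflexive, so $\cL$ cannot be recovered as a double dual and must instead be built by the explicit enlargement described here.
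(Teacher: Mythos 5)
Your proof is correct and follows essentially the same route as the paper: the paper reduces the forward direction to the classification of MCM modules over the local ring $\bfk[t,\varepsilon]/(\varepsilon^2)_{(t)}$ of the ribbon (\pref{lm:local_str_nonred}, quoted from the literature), identifies the stalk of $\cU$ with an ideal $(t^n,\varepsilon)$, and obtains $\cL$ by locally replacing $\cU_x$ with the overmodule $\cO_{W,x}$ — which is exactly your construction, your fractional-ideal normal form $M = A\,x^{-s}\varepsilon + A\,x^{-r}(1+c\varepsilon)$ being an inline reproof of that cited lemma (multiplication by the unit $x^{s}(1-c\varepsilon)$ turns it into $(t^{s-r},\varepsilon)$, and $M=(t^{s-r},\varepsilon)\cL_p$ gives the restriction-map description of $c$). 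Your explicit verification of the converse, which the paper leaves implicit, is also fine.
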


By an abuse of notation we let
$
 \cO _{ D }
$
denote both the structure sheaf of $D$ and its pushforwards to ambient schemes, depending on the context.
\pref{th:sheaf_bimodule_non-reduced}
is an immediate corollary of
\pref{lm:local_str_nonred} below on the local structure of $\cU$.

\begin{lemma}[{$=$ \cite[Proposition 4.1]{MR877011}}] \label{lm:local_str_nonred}
Consider the local $\bfk$-algebra
$
 A = \bfk [t, \varepsilon ] / ( \varepsilon ^{ 2 } ) _{ ( t ) }
$
and let
$
 M
$
be an MCM $A$-module such that
$
 \ann _{ A } ( M ) = 0
$.
Then there exists a uniquely determined non-negative integer
$
 n
$
such that $M$ is isomorphic to the ideal
$
 ( t^n, \varepsilon )
$
as
$
 A
$-modules.
\end{lemma}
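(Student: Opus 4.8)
The plan is to reinterpret modules over $A$ in terms of the underlying discrete valuation ring. Write $R$ for the localization $\bfk[t]_{(t)}$, a DVR with uniformizer $t$, so that $A = R[\varepsilon]/(\varepsilon^2) = R \oplus R\varepsilon$ is free of rank $2$ over $R$. Giving an $A$-module $M$ is the same as giving an $R$-module equipped with an $R$-linear endomorphism $\phi$ (the action of $\varepsilon$) satisfying $\phi^2 = 0$. The first step is to translate the two hypotheses into this language. For the MCM condition: since $A$ is finite over $R$, every finitely generated $A$-module is finitely generated over the DVR $R$, hence splits as $R^r \oplus T$ with $T$ of finite length; the submodule $T$ of $t$-power-torsion elements is $\phi$-stable, hence an $A$-submodule, and one checks $T = H^0_{\mathfrak m}(M)$. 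Thus $M$ is MCM (equivalently $\operatorname{depth}_{\mathfrak m} M = 1$) if and only if $M$ is free over $R$. For faithfulness: a relation $a\cdot\operatorname{id} + b\,\phi = 0$ with $a,b\in R$ forces $a=b=0$ as soon as $\phi\neq 0$, because $\phi$ is nilpotent while $R^r$ is torsion-free; hence $\ann_A(M)=0$ is equivalent to $\phi\neq 0$.

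After this reduction I would classify, up to isomorphism, pairs $(R^r,\phi)$ with $\phi^2=0$ and $\phi\neq 0$. Here one uses the rank: the sheaf $\cU$ to which the lemma is applied has rank $1$ on $W$, which means $M\otimes_R \Frac(R)$ is free of rank $1$ over $\Frac(A)=\bfk(t)[\varepsilon]/(\varepsilon^2)$, i.e. of dimension $2$ over $\bfk(t)$; hence $r=2$. (Equivalently, the faithful MCM modules of rank $1$ are exactly the indecomposable ones, since $\Frac(A)$ is local of length $2$.) Over $\bfk(t)^2$ a nonzero square-zero endomorphism has rank $1$, so $\operatorname{im}\phi$ and $\ker\phi$ are both of rank $1$ and $\operatorname{im}\phi\subseteq\ker\phi$.

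The next step is to produce the normal form. Since $\phi$ is a map between free modules over the PID $R$, the kernel $\ker\phi$ is saturated and therefore a rank-$1$ free direct summand of $R^2$; the same holds for the saturation $\overline{\operatorname{im}\phi}$, and as both are saturated of rank $1$ with $\operatorname{im}\phi\subseteq\ker\phi$ they coincide. Choosing a basis $(e_1,e_2)$ with $Re_1=\ker\phi=\overline{\operatorname{im}\phi}$, we get $\phi(e_1)=0$, while $\operatorname{im}\phi=R\,\phi(e_2)$ has index $t^n$ in $Re_1$ for a unique $n\ge 0$; after rescaling $e_2$ by a unit this reads $\phi(e_2)=t^n e_1$. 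Comparing with the ideal $(t^n,\varepsilon)$, whose $R$-basis $(\varepsilon, t^n)$ satisfies $\varepsilon\cdot\varepsilon=0$ and $\varepsilon\cdot t^n = t^n\varepsilon$, yields an $A$-module isomorphism $M\simeq (t^n,\varepsilon)$ sending $e_1\mapsto\varepsilon$ and $e_2\mapsto t^n$. Uniqueness of $n$ is immediate, since $n=\operatorname{length}_R(\ker\phi/\operatorname{im}\phi)$ is an isomorphism invariant.

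The only genuine subtlety — the main obstacle — is that this is a classification up to conjugacy of $\phi$ (the same basis must be used on source and target), not up to the independent row and column operations of Smith normal form; in general this makes square-zero endomorphisms awkward to normalize. What rescues us is the combination $r=2$ and $\phi^2=0$, which forces $\ker\phi=\overline{\operatorname{im}\phi}$ to be a single rank-$1$ summand and thereby pins down the adapted basis up to the unit rescalings used above. Everything else — the torsion computation identifying $H^0_{\mathfrak m}(M)$, and the bookkeeping describing $(t^n,\varepsilon)$ as an $R$-module with its $\varepsilon$-action — is routine.
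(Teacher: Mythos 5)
Your argument is correct, but there is no proof in the paper to compare it with: the lemma is quoted verbatim from \cite[Proposition 4.1]{MR877011}, where it appears as part of the classification of maximal Cohen--Macaulay modules over the $A_\infty$ curve singularity $y^2=0$, obtained there from the general machinery for hypersurface singularities. Your route is elementary and self-contained: restrict scalars along the finite free extension $R=\bfk[t]_{(t)}\subset A$, observe that MCM means free over $R$ and that faithfulness means the square-zero operator $\phi$ giving the $\varepsilon$-action is nonzero, and then normalize $\phi$ on $R^2$ using the fact that $\ker\phi$ is a saturated rank-one direct summand equal to the saturation of $\operatorname{im}\phi$. Each step checks out, including the identification of the $t$-power torsion with $H^0_{\mathfrak m}(M)$, the matching of the resulting normal form with the $R$-basis $(\varepsilon,t^n)$ of the ideal $(t^n,\varepsilon)$, and the uniqueness of $n=\operatorname{length}_R(\ker\phi/\operatorname{im}\phi)$. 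The one point that deserves emphasis is your insertion of the rank-one hypothesis: as literally printed the lemma is false, since $M=A^{\oplus 2}$ is MCM and faithful but not isomorphic to any $(t^n,\varepsilon)$, so one must assume $M$ indecomposable, or generically free of rank one over $A$. You correctly import this from the intended application (the stalk of $\cU$, which is generically isomorphic to $\cO_W$); that is a defect of the statement rather than of your proof, but the corrected hypothesis would better be stated in the lemma itself than supplied from the ambient geometry inside the argument.
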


Let
$
 x \in W
$
be a closed point at which
$
 \cU
$
is not locally free. By taking the module $M$ of \pref{lm:local_str_nonred} to be the stalk
$
 \cU _{ x }
$
and
$
 A
$
to be
$
 \cO _{ W, x }
$,
one can find an embedding
$
 \cU _{ x }
 \hookrightarrow
 \cO _{ W, x }
$
as an ideal. The invertible sheaf
$
 \cL
$
is obtained from
$
 \cU
$
by locally replacing
$
 \cU _{ x }
$
with the over module
$
 \cO _{ W, x }
$
at each such point $x$.

Tensoring
\eqref{eq:sheaf_bimodule_non-reduced}
with
$
 \cL ^{ - 1 }
$, we obtain
\begin{align}
 0 \to \cU \otimes \cL ^{ - 1 } \to \cO _{ W } \to \cO _{ D } \to 0.
\end{align}
On the other hand, since
$
 I_{ \Wred / W }
 \simeq
 i ^{ * } \cO_{\X \times \X}( - 1, - 1 )
 \simeq \cO_{\Wred}( - 2 ),
$
where
$
 i \colon \Wred \to W
$
is the canonical inclusion, we obtain a morphism of exact sequences as follows. The rightmost vertical map corresponds to the closed immersion
$
 D \hookrightarrow \Wred
$.
\begin{align}
 \xymatrix{
 0 \ar[r] & i _{ * } \cO _{ \Wred } ( - 2 ) \ar[r] \ar[d] & \cO _{ W } \ar[r] \ar@{=}[d] & i _{ * } \cO _{ \Wred } \ar[r] \ar[d] & 0\\
 0 \ar[r] & \cU \otimes \cL ^{ - 1 } \ar[r] & \cO _{ W } \ar[r] & \cO _{ D } \ar[r] & 0
 }
\end{align}
Applying the snake lemma, we obtain the short exact sequence
\begin{align}\label{eq:decomposition_of_U_L_-1}
 0 \to i _{ * } \cO _{ \Wred } ( - 2 ) \to \cU \otimes \cL ^{ - 1 } \to i _{ * } \cO _{ \Wred } ( - D ) \to 0.
\end{align}
Applying
$
 - \otimes \cL
$
we obtain the exact sequence
\begin{align}\label{eq:decomposition_of_U}
 0
 \to
 \cL \otimes _{ W } i _{ * } \cO _{ \Wred } ( - 2 )
 \to
 \cU
 \to
 \cL \otimes _{ W } i _{ * } \cO _{ \Wred } ( - D )
 \to
 0,
\end{align}
which locally around the point
$
 x _{ i }
$
is isomorphic to the exact sequence
\begin{align} \label{eq:local_nonred_ES}
 0 \to (\varepsilon) \to ( t ^{ n _{ i } }, \varepsilon ) \to ( t ^{ n _{ i } }, \varepsilon ) / ( \varepsilon ) \to 0
\end{align}
of 
$A$-modules. From this local description, one can conclude that the support of the divisor $D$ in \pref{th:sheaf_bimodule_non-reduced}
coincides with the non-locally-free locus of $\cU$.

Applying
$
 v _{ * }
$
to
\eqref{eq:decomposition_of_U}, we obtain the following exact sequence on $\X$.
\begin{align} \label{eq:L_U_L}
 0
 \to
 i ^{ * } \cL ( - 2 )
 \to
 v _{ * } \cU
 \to
 i ^{ * } \cL ( - D )
 \to
 0.
\end{align}
If
$
 \deg D > 0,
$
then the sequence \pref{eq:L_U_L} splits,
and one has
$
 v _{ * } \cU
 \simeq
 i^* \cL ( - 2 ) \oplus i^* \cL ( - \deg D )
$.
Therefore when
$
 \deg D > 0
$,
the deformations of $\cU$ correspond to the deformations of the pair
$
 \lb \cL, D \rb
$,
so that we obtain a
$
 \lb \dim \Pic ^{ 0 } W + \deg D = \rb \lb \deg D + 1 \rb
$-dimensional family of sheaf bimodules $\cU$.
Taking into account the action of $\Aut(\X \times \X)$ on the space of sheaf bimodules,
which preserves the equivalence classes of the associated categories $\Qgr \bS(\cE)$,
when
$
 \deg D \ge 2
$,
we obtain a
$
 \lb (\deg D + 1) - 3 = \rb ( \deg D - 2 )
$-dimensional family of noncommutative Hirzebruch surfaces;
note that the anti-diagonal subgroup of $\Aut(\X \times \X)$ has been already used to translate $\Wred$ to the diagonal.

%

To describe invertible sheaves on $W$, consider the defining ideal
$
 \cI \subset \cO_W
$
of $\Wred$. It satisfied
$
 \cI ^{ 2 } = 0
$,
and since
$
 \Wred
$
is a divisor of bidegree
$
 ( 1, 1 )
$, it follows that
$
 \cI \simeq i_* \cO_{\Wred} ( - 2 )
$. Note that
$
 \cN _{ \Wred / W } \simeq \lb \cI / \cI ^{ 2 } \rb ^{ \vee } = \cI ^{ \vee } \simeq \cO_{\Wred} ( 2 )
$.
It then follows that
\begin{align}
 W \simeq V ( \cJ ^{ 2 } ) \subset \cN_{ \Wred / W } = \Spec_{\Wred} \Sym_{\cO_{\Wred}} ^{ \bullet } \cO_{\Wred}(-2),
\end{align}
where
$
 \cJ = \Sym_{\cO_{\Wred}} ^{ > 0 } \cO_{\Wred} ( - 2 )
$
is the ideal sheaf of the 0-section of the normal bundle
$
 \cN_{ \Wred / W }
$.
From this isomorphism, we see that the scheme $W$ is 
obtained by gluing
$
 U_1 \coloneqq \Spec \bfk[z,u]/(u^2)
$
and
$
 U_2 \coloneqq \Spec \bfk[w,v]/(v^2)
$
along
$
 U_{12} \coloneqq
  \Spec \bfk[z,w,u,v]/(u^2,v^2,zw-1, u-z^2v)
$.

Consider the short exact sequence
\begin{equation}
 1 \to \cI \xto{e} \cO_W^\times \to \cO _{\Wred}^\times \to 1
\end{equation}
of sheaves of abelian groups
on the topological space underlying $W$, where
$e$ is the map defined by
$
 x \mapsto x + 1.
$
By taking the long exact sequence, we obtain the exact sequence
\begin{equation}
 1 \to
 H^1 \lb \Wred, \cO_{\Wred}(-2) \rb
 \xto[]{ H ^{ 1 } ( e ) }
 \Pic ( W )
 \xto[]{\deg \lb \bullet | _{ \Wred } \rb}
 \Pic ( \Wred ) \simeq \bZ
 \to
 1
\end{equation}
of abelian groups. In particular,
$
 \deg \cU = 2 \deg \lb \cU | _{ \Wred } \rb
$
is always an even integer.

Via the explicit description of $W$ given above, the \v{C}ech complex for
$
 \cO _{ \Wred } ( - 2 )
$
with respect to the affine cover
$
 \Wred = U _{ 1, \mathrm{red} } \cup U _{ 2, \mathrm{red} }
$
is described as follows.
\begin{align}
 \bfk [ z ] \oplus \bfk [ w ] \xto[]{d} \bfk [ z, w ] / ( z w - 1 ); \quad \quad
 \lb f ( z ), g ( w ) \rb \xmapsto[]{d} f ( z ) - \frac{1}{ z ^{ 2 } } g \lb \frac{ 1 }{ z } \rb
\end{align}
Thus we see that
$
 \Hv ^1 \lb \Wred, \cO_{\Wred}(-2) \rb = \bfk \ld w \rd
$.

Under the morphism
$
 \Hv ^1 \lb \Wred, \cO_{\Wred}(-2) \rb
 \xto[]{ H ^{ 1 } ( e ) }
 \Hv ^1 \lb W, \cO_{W}(-2) \rb
$,
the element
$
 a [ w ]
$
is mapped to the class represented by the cocycle
$
 1 + a u w = 1 + a v z \in \cO ^{ \times } \lb U _{ 1 2 } \rb
$.
Therefore it follows that any line bundle on $W$ of degree zero is given as the line bundle $\cL_a$
obtained by gluing
$\cO_{U_1}$ and $\cO_{U_2}$
by $1 + a z v \in \cO_{W}^\times(U_{12})$ for $a \in \bfk$.

The \v{C}ech complex for $\cL_a$ is given by
\begin{align}
 \bfk[z,u]/(u^2) \oplus \bfk[w,v]/(v^2) &\to \bfk[z,w,u,v]/(u^2,v^2,zw-1, u-z^2v), \\
 (f(z,u),g(w,v)) &\mapsto (1+a z v) f(z,u)-g(w,v),
\end{align}
which is acyclic if and only if $a \ne 0$.
More precisely, it follows that
\begin{align}\label{eq:cohomology_of_La}
 \dim H^i(\cL_a)
  =
\begin{cases}
 1 & a = 0, \\
 0 & a \ne 0,
\end{cases}
\end{align}
for $i=0,1$.
Since
$
 v
$
is an affine morphism, we have
\begin{align}
  H^i(\cL_a) \simeq H^i( v_*(\cL_a)).
\end{align}
Combining this with \eqref{eq:cohomology_of_La}, we see that
\begin{align}
 v _{ * } \cL_a
  \simeq
\begin{cases}
 \cO_{ \X } \oplus \cO_{ \X }(-2) & a = 0, \\
 \cO_{ \X }(-1) \oplus \cO_{ \X }(-1) & a \ne 0.
\end{cases}
\end{align}
Since the morphisms $u$ and $v$ are isomorphic, one also has
\begin{align}
 \cO_{ \X }(-1) \otimes \cE
  = v_*(v^*(\cO _{ \X }(-1)) \otimes \cL _{ a })
  = \cO _{ \X } ( - 1 ) \otimes v_* \cL _{ a }.
\end{align}

Consider in general
$
 \cU
$
with
$
 \cL | _{ \Wred } \simeq \cO _{ \Wred } ( q )
$
and
$
 \deg D > 0
$.
Then
$
 v _{ * } \cU
 \simeq
 \cO _{ \X } ( q - 2 )
 \oplus
 \cO _{ \X } ( q - \deg D )
$
as we observed after \eqref{eq:L_U_L}.

Summing up, we obtain the following conclusion.
\begin{lemma}\label{lm:non_reduced_W}
Suppose that
$
 W
$
is not reduced.

\begin{enumerate}
\item
When
$
 \cU
$
is invertible, then
\begin{align}
 ( a, b ), b - a
 =
 \begin{cases}
 \lb \frac{\deg \cU }{ 2 } - 2, \frac{\deg \cU }{ 2 } \rb, 2
 & \cU \simeq v ^{ * } \cO _{ \X } \lb k \rb \ \lb \exists k \in \bZ \rb, \\
 \lb \frac{\deg \cU }{ 2 } - 1, \frac{\deg \cU }{ 2 } - 1 \rb, 0
 & \cU \not \simeq v ^{ * } \cO _{ \X } \lb k \rb \ \lb \forall k \in \bZ \rb.
 \end{cases} 
\end{align}

\item
When
$
 \cU
$
is not invertible, so that
$
 \deg D > 0
$,
\begin{align}
 ( a, b ), b - a
 =
\begin{cases}
 \lb \frac{ \chi (\cU) - 3 }
 { 2 }, \frac{ \chi (\cU) -1 } 
 { 2 } 
 \rb,
 1 & \deg D = 1\\
 \lb \frac{ \chi (\cU) - \deg D} 
 { 2 },  
 \frac{ \chi (\cU)+\deg D} 
 { 2 } - 2 \rb,
 \deg D - 2 & \deg D \ge 2.
\end{cases}
\end{align}
\end{enumerate}
\end{lemma}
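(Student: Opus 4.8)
The plan is simply to assemble the two descriptions of $v_* \cU$ obtained earlier in this subsection and to rewrite the resulting pair of splitting degrees in terms of $\deg \cU = \chi(\cU)$ and $\deg D$, eliminating the auxiliary integer $q = \deg(\cU|_{\Wred})$ by means of the identity $\chi(\cU) = a + b + 2$ of \eqref{eq:consequence_from_the_euler_number}.

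For the invertible case (1), I would first note that the exact sequence $1 \to H^1(\Wred, \cO_{\Wred}(-2)) \to \Pic(W) \to \Pic(\Wred) \to 1$ established above expresses every invertible sheaf $\cU$ with $\cU|_{\Wred} \simeq \cO_{\Wred}(q)$ uniquely as $v^* \cO_\X(q) \otimes \cL_a$ with $a \in \bfk$; comparing restrictions to $\Wred$ then shows that $\cU \simeq v^* \cO_\X(k)$ for some $k$ holds exactly when $a = 0$ (and then $k = q$, since $\cL_0 = \cO_W$). Applying the projection formula $v_*(v^* \cO_\X(q) \otimes \cL_a) \simeq \cO_\X(q) \otimes v_* \cL_a$ and inserting the values of $v_* \cL_a$ already computed, I obtain $v_* \cU \simeq \cO_\X(q) \oplus \cO_\X(q-2)$ when $a = 0$ and $v_* \cU \simeq \cO_\X(q-1) \oplus \cO_\X(q-1)$ when $a \ne 0$. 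Since $\deg \cU = 2q$, substituting $q = \deg\cU / 2$ gives the two lines of (1), with $b - a$ equal to $2$ and $0$ respectively.

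For the non-invertible case (2), I would start from the decomposition $v_* \cU \simeq \cO_\X(q-2) \oplus \cO_\X(q - \deg D)$ recorded after \eqref{eq:L_U_L}. The exponents sum to $2q - 2 - \deg D$, which by $\chi(\cU) = a + b + 2$ must equal $\chi(\cU) - 2$; hence $q = (\chi(\cU) + \deg D)/2$, and the two exponents become $(\chi(\cU) - \deg D)/2$ and $(\chi(\cU) + \deg D)/2 - 2$. It then remains only to order them as $a \le b$, which amounts to comparing $q - 2$ with $q - \deg D$: for $\deg D = 1$ one has $q - 2 < q - 1$, so $(a,b) = ((\chi(\cU)-3)/2, (\chi(\cU)-1)/2)$ and $b - a = 1$; for $\deg D \ge 2$ one has $q - \deg D \le q - 2$, so $(a,b) = ((\chi(\cU)-\deg D)/2, (\chi(\cU)+\deg D)/2 - 2)$ and $b - a = \deg D - 2$.

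Because the structure of $W$, the classification of line bundles on it, and both computations of $v_*$ are already in hand, I expect no genuine obstacle: the proof is a matter of bookkeeping. The only points that require care are the identification in case (1) of the condition $\cU \simeq v^* \cO_\X(k)$ with the vanishing of the parameter $a$, and in case (2) the correct ordering of the two exponents—together with the resulting sign of $b - a$—across the subcases $\deg D = 1$ and $\deg D \ge 2$.
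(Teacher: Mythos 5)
Your proposal is correct and follows essentially the same route as the paper, which presents this lemma as a "summing up" of the preceding computations: the splitting $v_*\cU \simeq i^*\cL(-2)\oplus i^*\cL(-\deg D)$ for $\deg D>0$, the identification $\Pic^0(W)\simeq\{\cL_a\}_{a\in\bfk}$ with $v_*\cL_0\simeq\cO_\X\oplus\cO_\X(-2)$ and $v_*\cL_a\simeq\cO_\X(-1)^{\oplus 2}$ for $a\neq 0$, and the normalization via $\chi(\cU)=a+b+2$. Your explicit handling of the two delicate points (the equivalence $\cU\simeq v^*\cO_\X(k)\Leftrightarrow a=0$ via restriction to $\Wred$, and the ordering of the exponents across $\deg D=1$ versus $\deg D\ge 2$) matches what the paper leaves implicit.
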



%
%
%
\subsection{Reduced $W$}

In this section we assume that $W$ is a reduced divisor of bidegree $(2,2)$.
Either by studying the branched double covers
$u \colon W \to \X$
or by sending $W$ 
by a birational map $\X \times \X \dashrightarrow \bP^2$
and using the classification of cubic curves in $\bP^2$,
one can show that there are five possibilities for $W$ as follows. In the rest of this paper, for convenience, we will use the Kodaira's symbol for singular elliptic fibers to describe the type of $W$. Note that in the list below, except the case $I _{ 0 }$, the type uniquely determines the isomorphism class of $W$.
\begin{enumerate}[(i)]
 \item
$W$ is an elliptic curve ($
 I _{ 0 }
$).

 \item
$W$ is an irreducible nodal rational curve ($I _{ 1 }$).

 \item
$W$ is the union $W_1 \cup W_2$ of two smooth rational curves
intersecting at two points ($I _{ 2 }$).

 \item
$W$ is a cuspidal rational curve ($II$).

 \item
$W$ is the union $W_1 \cup W_2$ of two smooth rational curves
intersecting at one point with multiplicity two ($III$).
\end{enumerate}

\begin{lemma} \label{lm:irreducible_W_invertible_U}
Suppose that
$
 W
$
is irreducible and $\cU$ is invertible. 
Then one has
\begin{align}
 ( a, b ), b - a
 =
 \begin{cases}
 \lb \frac{\deg \cU }{ 2 } - 2, \frac{\deg \cU }{ 2 } \rb, 2
 & \cU \simeq v ^{ * } \cO _{ \X } \lb k \rb \ \lb \exists k \in \bZ \rb, \\
 \lb \frac{\deg \cU }{ 2 } - 1, \frac{\deg \cU }{ 2 } - 1 \rb, 0
 & \cU \not \simeq v ^{ * } \cO _{ \X } \lb k \rb \ \lb \forall k \in \bZ \rb \mbox{ and } \deg \cU \equiv 0 \mod 2, \\
 \lb \frac{\deg \cU - 1 }{ 2 } - 1, \frac{\deg \cU - 1 }{ 2 } \rb, 1 & \deg \cU \equiv 1 \mod 2,
 \end{cases} 
\end{align}
where the integers
$
 a, b
$
are those in \eqref{eq:decomposition_of_v*U}.

%
\begin{comment}
\begin{align}
 \cO_\X \otimes_{\cO_\X} \cE \simeq
\begin{cases}
 \cO_\X \oplus \cO_\X(-2) & \cU \simeq \cO_W, \\
 \cO_\X(-1) \oplus \cO_\X(-1) & \cU \not \simeq \cO_W \text{ and } \deg \cU = 0, \\
 \cO_\X \oplus \cO_\X(-1) & \deg \cU = 1.
\end{cases}
\end{align}
\end{comment}
\end{lemma}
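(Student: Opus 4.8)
The plan is to reduce the entire statement to the computation of the single integer $b - a$, which I claim lies in $\{ 0, 1, 2 \}$. The Euler characteristic identity \eqref{eq:consequence_from_the_euler_number} gives $a + b = \deg \cU - 2$, so that $a$ and $b$ are determined by $\deg \cU$ together with $b - a$; reducing this identity modulo $2$ also shows $b - a \equiv \deg \cU \pmod{2}$. It therefore suffices to establish two facts: \emph{(i)} one always has $b - a \le 2$, and \emph{(ii)} when $\deg \cU$ is even, $b - a = 2$ holds if and only if $\cU \simeq v ^{ * } \cO _{ \X } ( k )$ for some $k \in \bZ$. Granting these, the three cases of the lemma follow by arithmetic: when $\deg \cU$ is odd the parity constraint and \emph{(i)} force $b - a = 1$ (and $\cU \simeq v ^{ * } \cO _{ \X } ( k )$ cannot occur, since $\deg v ^{ * } \cO _{ \X } ( k ) = 2 k$ is even), whereas when $\deg \cU$ is even we have $b - a \in \{ 0, 2 \}$ with the value $2$ characterized by \emph{(ii)}; substituting into $a + b = \deg \cU - 2$ yields the displayed values of $( a, b )$ in each case.

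The common tool for both \emph{(i)} and \emph{(ii)} is a single twist. I set $M \coloneqq \cU \otimes v ^{ * } \cO _{ \X } ( - b )$. By the projection formula $v _{ * } M \simeq \cO _{ \X } ( a - b ) \oplus \cO _{ \X }$, and since $v$ is finite, hence affine, $H ^{ i } ( M ) \simeq H ^{ i } ( v _{ * } M )$; as $a \le b$, this gives $h ^{ 0 } ( M ) = 1$ whenever $a < b$. On the other hand $\deg M = \deg \cU - 2 b = ( a + b + 2 ) - 2 b = 2 - ( b - a )$, using that $v$ has degree $2$.

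For \emph{(i)}, suppose $b - a \ge 3$. Then $a < b$, so $h ^{ 0 } ( M ) = 1 > 0$, while $\deg M = 2 - ( b - a ) \le - 1 < 0$. Since $W$ is irreducible and reduced it is integral, and on an integral projective curve a line bundle of negative degree admits no nonzero global section; this contradiction proves $b - a \le 2$. For the nontrivial direction of \emph{(ii)}, assume $\deg \cU$ even and $b - a = 2$; then $a < b$ and $\deg M = 0$, so $M$ is a degree-zero line bundle with a nonzero section on an integral curve, hence trivial (the section gives an injection $\cO _{ W } \hookrightarrow M$ with cokernel of length $\deg M = 0$). Thus $M \simeq \cO _{ W }$, i.e. $\cU \simeq v ^{ * } \cO _{ \X } ( b )$, as desired. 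The converse direction is the computation $v _{ * } \lb v ^{ * } \cO _{ \X } ( k ) \rb \simeq \cO _{ \X } ( k ) \otimes v _{ * } \cO _{ W } \simeq \cO _{ \X } ( k ) \oplus \cO _{ \X } ( k - 2 )$, where $v _{ * } \cO _{ W } \simeq \cO _{ \X } \oplus \cO _{ \X } ( - 2 )$ because $v$ is finite flat of degree $2$ with $\cO _{ \X }$ split off by the trace and $\chi ( \cO _{ W } ) = 0$ (recall $\omega _{ W } \simeq \cO _{ W }$); this gives $b - a = 2$.

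I do not expect a serious obstacle here. The only point requiring care is that $W$ may be singular, so the two facts about line bundles of non-positive degree must be used in the form valid on an \emph{arbitrary} integral projective curve --- phrased via $\chi$ and lengths of cokernels of sections, rather than via the smooth-curve dictionary. Both hold precisely because this subsection assumes $W$ irreducible and reduced, which is exactly what makes the negative-degree vanishing and the degree-zero triviality available.
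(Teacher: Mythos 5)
Your proof is correct and rests on exactly the same ingredients as the paper's own argument: the identity $\chi(\cU) = a + b + 2$, the affineness of $v$ (so $h^i(\cU \otimes v^*(-)) = h^i(v_*\cU \otimes (-))$), and the standard facts that on the integral curve $W$ (of arithmetic genus one) a line bundle with a nonzero section has non-negative degree and is trivial when that degree is zero. The only difference is packaging: the paper twists so that $\deg \cU \in \{0,1\}$ and reads off $h^0(\cU)$ and $h^1(\cU)$ (invoking Serre duality in the odd case), whereas you twist so that the larger summand of $v_*\cU$ becomes $\cO_\X$ and bound $b-a$ directly --- a reorganization of the same argument rather than a different route.
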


\begin{proof}
The first claim follows from the following computations
\begin{align}
 \chi _{ W } \lb \cU \rb = \chi _{ \X } \lb v _{ * } \cU \rb
 = \chi _{ \X } \lb \cO _{ \X } \lb a \rb \oplus \cO _{ \X } \lb b \rb \rb
 =
 \lb a + 1 \rb + \lb b + 1 \rb.
\end{align}
For the first equality we used that
$
 v
$
is an affine morphism.

Let us show the second claim. By replacing
$
 \cU
$
with
$
 \cU \otimes v ^{ * } \cO _{ \X } \lb k \rb
$
for some integer
$
 k
$,
we may and will assume
$
 \deg \cU = 0
$
or
$
 1
$, without changing the value
$
 b - a
$.

If $\deg \cU = 0$,
\eqref{eq:consequence_from_the_euler_number} implies
\begin{align}\label{eq:proof_of_lemma_9_4}
 a + b + 2 = \chi \lb \cU \rb = 0
\end{align}
On the other hand we have the following dichotomy
\begin{align}
 h^0(v_* \cU) =  h^0(\cU) =
\begin{cases}
 1 & \cU \simeq \cO_W, \\
 0 & \text{otherwise}.
\end{cases}
\end{align}
In the first case it follows that $a < b = 0$,
and in the second case $a \le b \le -1$. Combining this with
\eqref{eq:proof_of_lemma_9_4},
in each of the two cases we conclude
$
 \lb a, b \rb = \lb - 2, 0 \rb
$
and
$
 \lb - 1, - 1 \rb
$,
respectively.
%
%

If $\deg \cU = 1$,
then one has
\begin{align}
 h^1(\cU) = h^0(\cU^\vee) = 0,
\end{align}
which together with \eqref{eq:Riemann-Roch_for_embedded_curves} implies
\begin{align}
 h^0(\cU) = \chi \lb \cU \rb = 1.
\end{align}
Combining this with
\eqref{eq:consequence_from_the_euler_number} as before,
we conclude that
$
 \lb a, b \rb = \lb - 1, 0 \rb
$.
Thus we finish the proof of \pref{lm:irreducible_W_invertible_U}.
\end{proof}

\begin{lemma}\label{lm:non_irreducible_W_invertible_U}
Suppose that
$
 W
$
is \emph{not} irreducible (i.e.,
$
 I _{ 2 }
$
or
$
 III
$), so that it is a sum of two distinct effective divisors of bidegree
$
 ( 1, 1 )
$
and
$
 \cU
$
is an invertible sheaf of bidegree
$
 \lb p, q \rb
$
on
$
 W
$
with
$
 p \le q
$.
Then
\begin{align}
 ( a, b ), b - a
 =
 \begin{cases}
 ( p - 2, p ), 2 & \mbox{if} \  q - p = 0 \mbox{ and } \cU \simeq v ^{ * } \cO _{ \X } ( p ),\\
 ( p - 1, p - 1 ), 0 & \mbox{if} \  q - p = 0 \mbox{ and } \cU \not\simeq v ^{ * } \cO _{ \X } ( p ),\\
 ( p - 1, p ), 1 & \mbox{if} \  q - p = 1,\\
 ( p, q - 2 ), q - p - 2 & \mbox{if} \  q - p \ge 2.\\
 \end{cases}
\end{align}
\end{lemma}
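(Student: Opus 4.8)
The plan is to reduce the reducible case to the irreducible computation of \pref{lm:irreducible_W_invertible_U} by analyzing each of the two irreducible components separately. Since $W = W_1 \cup W_2$ is a sum of two distinct divisors of bidegree $(1,1)$, each component $W_i$ is itself isomorphic to $\X$ and is transferred to the diagonal by an automorphism of $\X \times \X$. The restrictions of $u$ and $v$ to each component are therefore isomorphisms, and the bidegree $(p,q)$ of the invertible sheaf $\cU$ records the degrees $p = \deg(\cU|_{W_1})$ and $q = \deg(\cU|_{W_2})$ (ordered so that $p \le q$).

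First I would compute $v_* \cU$ via the normalization sequence. Write $\nu \colon W_1 \sqcup W_2 \to W$ for the partial normalization separating the two components; there is a short exact sequence
\begin{align}
 0 \to \cU \to \nu_* \nu^* \cU \to \cS \to 0,
\end{align}
where $\cS$ is a skyscraper supported on the intersection points $W_1 \cap W_2$. Pushing forward by $v$ and using that $\nu^* \cU$ restricts to $\cO_{\X}(p)$ on $W_1$ and $\cO_{\X}(q)$ on $W_2$ (after the identifications with $\X$), one obtains $v_*(\nu_*\nu^*\cU) \simeq \cO_\X(p) \oplus \cO_\X(q)$. The task is then to determine how the two summands are modified by the gluing data encoded in $\cS$. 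This is exactly parallel to the non-reduced analysis: the gluing either produces a trivial twist (giving the split answer $\cO_\X(p)\oplus\cO_\X(q)$, i.e. $(a,b) = (p, q-2)$ after accounting for the degree shift coming from the two intersection points), or it is nontrivial and forces the summands closer together. I would compute $h^0(\cU)$ and $h^1(\cU) = h^0(\cU^\vee)$ directly, as in the proof of \pref{lm:irreducible_W_invertible_U}, using \eqref{eq:Serre_duality_for_embedded_curves} and the Riemann--Roch formula \eqref{eq:Riemann-Roch_for_embedded_curves}, together with \eqref{eq:consequence_from_the_euler_number} which pins down $a+b = p+q-2$.

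The case division then proceeds by the value of $q-p$. When $q - p \ge 2$, the degrees on the two components are far enough apart that $\cU$ admits no sections distinguishing it from the split sheaf, and one reads off $(a,b) = (p, q-2)$ with $b - a = q - p - 2$; here the $-2$ shift is the contribution of the two nodes of $W$ (the two points of $W_1 \cap W_2$, consistent with the bidegree $(2,2)$ intersection $W_1 \cdot W_2 = 2$). When $q - p \le 1$ the balance is tight: if $q = p$ the same dichotomy as in \pref{lm:irreducible_W_invertible_U} applies, distinguishing $\cU \simeq v^*\cO_\X(p)$ (giving $(p-2,p)$) from the generic case (giving $(p-1,p-1)$), by whether $h^0$ of the degree-normalized sheaf jumps; and if $q - p = 1$ one obtains $(p-1, p)$ by the same Euler-characteristic plus vanishing-of-$h^1$ argument that handled $\deg \cU = 1$ in the irreducible lemma.

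The main obstacle will be the tight cases $q - p \le 1$, where determining the isomorphism type of $v_* \cU$ cannot be decided by Euler characteristic alone and requires controlling the section spaces carefully through the gluing sequence — in particular verifying that the dichotomy $\cU \simeq v^*\cO_\X(p)$ versus $\cU \not\simeq v^*\cO_\X(p)$ governs whether the sheaf becomes unbalanced, exactly as the condition $\cU \simeq \cO_W$ did in the irreducible proof. I expect the bookkeeping of the intersection contribution (ensuring the correct $-2$ appears and that the parity statements are consistent with \eqref{eq:consequence_from_the_euler_number}) to be the delicate point, but no genuinely new idea beyond the irreducible case should be needed.
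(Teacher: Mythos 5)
Your proposal follows essentially the same route as the paper: normalize via $0 \to \cU \to \nu_*\nu^*\cU \to C \to 0$ with $C$ the length-two conductor quotient, push forward by $v$, and combine the Euler-characteristic constraint $a+b+2 = p+q$ with section counting to pin down $(a,b)$, using the dichotomy $\cU \simeq v^*\cO_\X(p)$ versus not when $q=p$. The only remark worth making is that the genuinely delicate step in the paper is the case $q-p\ge 2$ rather than the tight cases: there one must first show $a\le p$ (else the cokernel of $v_*\cU \hookrightarrow \cO_\X(p)\oplus\cO_\X(q)$ fails to be torsion) and then rule out $a=p-1$ by checking that the restriction map to $C$ becomes an isomorphism after twisting, which is more than ``reading off'' the split answer.
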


\begin{proof}
Replacing
$
 \cU
$
with
$
 \cU \otimes v ^{ * } \cO _{ \X } ( - p )
$,
we may and will assume
$
 p = 0
$
throughout the proof.
Note first the constraint
\begin{align}\label{eq:q_is_a+b+2}
 q = \chi \lb \cU \rb = \chi \lb v _{ * } \cU \rb = a + b + 2,
\end{align}
where the first equality follows from the Riemann-Roch formula \eqref{eq:Riemann-Roch_for_embedded_curves}.

Take the normalization morphism
$
 \nu \colon W ^{ \nu } \simeq \X \coprod \X \to W
$,
so that
$
 \nu ^{ * } \cU \simeq \cO _{ \X } \coprod \cO _{ \X } ( q )
$.
Note that there exists the exact sequence
\begin{align}\label{eq:locally_free_U_adjunction_unit_map}
 0 \to \cU \to \nu _{ * } \nu ^{ * } \cU \to \nu _{ * } \cO _{ W ^{ \nu } } / \cO _{ W } \simeq \cO _{ W } / \cI
 \eqqcolon C \to 0,
\end{align}
where
$
 \cI \coloneqq \cHom _{ W } \lb \nu _{ * } \cO _{ W ^{ \nu } }, \cO _{ W } \rb \subset \cO _{ W }
$
is the conductor ideal. Depending on whether $W = I _{ 2 }$ or $III$, we have
$
 \cO _{ W } / \cI \simeq \bfk \times \bfk
$
or
$
 \bfk [ t ] / ( t ^{ 2 } )
$,
respectively.

Applying
$
 v _{ * }
$
to
\eqref{eq:locally_free_U_adjunction_unit_map}, we obtain the exact sequence
\begin{align}\label{eq:locally_free_U_pushforward_of_adjunction_unit_map}
 0
 \to
 v _{ * } \cU \simeq \cO _{ \X } ( a ) \oplus \cO _{ \X } ( b )
 \xto[]{\iota}
 \cO _{ \X } \oplus \cO _{ \X } ( q )
 \to
 v _{ * } C \simeq C \to 0.
\end{align}

Now suppose that
$
 q = 0
$, so that
$
 \cU \in \Pic ^{ 0 } ( W )
$.
Since the map
$
 \iota
$
of \eqref{eq:locally_free_U_pushforward_of_adjunction_unit_map} has trivial kernel, it follows that
$
 b \le 0
$.
Combining this with the assumption
$
 a \le b
$
and
\eqref{eq:q_is_a+b+2}, we immediately see
$
 ( a, b )
$
is either
$
 ( - 2, 0 )
$
or
$
 ( - 1, - 1 )
$.
It is then easy to observe that
$
 \cU \simeq \cO _{ W }
 \iff
 h ^{ 0 } ( W, \cU ) = 1
 \iff
 h ^{ 0 } ( \X, v _{ * } \cU ) = 1
 \iff
 \lb a, b \rb = ( - 2, 0 )
$.

Next assume that
$
 q > 0
$.
Apply
$
 \bR \Gamma \lb \X, - \rb
$
to \eqref{eq:locally_free_U_pushforward_of_adjunction_unit_map} and take the associated long exact sequence, to obtain:
\begin{align}
\begin{aligned}
 0
 \to
 H ^{ 0 } \lb \cO _{ \X } ( a ) \rb \oplus H ^{ 0 } \lb \cO _{ \X } ( b ) \rb
 \to
 H ^{ 0 } \lb \cO _{ \X } \rb \oplus H ^{ 0 } \lb \cO _{ \X } ( q ) \rb
 \xto[]{r}
 H ^{ 0 } \lb C \rb = C\\
 \to
 H ^{ 1 } \lb \cO _{ \X } ( a ) \rb \oplus H ^{ 1 } \lb \cO _{ \X } ( b ) \rb
 \to
 0
\end{aligned}
\end{align}

Note that the map
$
 r
$
is identified with the restriction map
\begin{align}
 H ^{ 0 } \lb W, \nu _{ * } \nu ^{ * } \cU \rb \to C,
\end{align}
which is surjective when
$
 q > 0
$.
This then implies
$
 H ^{ 1 } \lb \cO _{ \X } ( a ) \rb \oplus H ^{ 1 } \lb \cO _{ \X } ( b ) \rb = 0
$,
so that
$
 - 1 \le a ( \le b )
$.
When
$
 q = 1
$,
combining this again with \eqref{eq:q_is_a+b+2}, we immediately see that
$
 ( a, b ) = ( - 1, 0 )
$.

Consider the remaining case
$
 q \ge 2
$.
We see that
$
 a \le 0
$,
since otherwise the composition of the map
$
 \iota
$
of
\eqref{eq:locally_free_U_pushforward_of_adjunction_unit_map} with the projection to the component
$
 \cO _{ \X }
$
is trivial, contradicting that
$
 \coker \iota
$
is a torsion sheaf. Finally, suppose for a contradiction that
$
 a = - 1
$.
Then by applying
$
 - \otimes \cO _{ \X } ( - b )
$
to
\eqref{eq:locally_free_U_pushforward_of_adjunction_unit_map} and taking the associated long exact sequence, we obtain the following exact sequence.
\begin{align}
 0
 \to
 H ^{ 0 } \lb \cO _{ \X } ( - b - 1 ) \oplus \cO _{ \X } \rb
 \to
 H ^{ 0 } \lb \cO _{ \X } ( - b ) \oplus \cO _{ \X } ( 1 ) \rb
 \xto[]{ r ' }
 C.
\end{align}
As before, one can check that the map
$
 r '
$
is an isomorphism by identifying it with the restriction map
\begin{align}
 H ^{ 0 } \lb W, \nu _{ * } \nu ^{ * } \cU \otimes v ^{ * } \cO ( - b ) \rb
 \to
 C.
\end{align}
Thus we obtain a contradiction, concluding the proof.
\end{proof}

We next consider the case when
$
 \cU
$
is not an invertible sheaf on $W$. Since
$
 \cU
$
is torsion free of rank $1$, it is invertible on the smooth locus of $W$.
Moreover, as we observed in the beginning of \pref{sc:sbc}, the sheaf $\cU$ is a maximally Cohen-Macaulay (MCM) module over $W$.
Since the singularity of $W$ is either of type $A _{ 1 }$, $A _{ 2 }$, or $A _{ 3 }$, we can use the (finite!) classification of the indecomposable MCM modules on those singularities to classify the local structure of $\cU$ around the singularity of $W$, which is described for example in \cite[(5.12)]{MR1079937} (for
$
 A _{ \text{odd} }
$)
and
\cite[(9.9)]{MR1079937} (for
$
 A _{ \text{even} }
$).
Taking into account that the support of $\cU$ coincides with $W$, the local structures of $\cU$ around the singularities of $W$ are classified as follows.

\begin{align}
 \begin{array}{c|c|c}
\mbox{$W$} & \mbox{Singularity of $W$} & \mbox{
\begin{minipage}{6cm}
Stalk of $\cU$ at the singularity up to isomorphism
\end{minipage}
}\\
\hline
\hline
I _{ 1 } \  \mbox{or} \ I _{ 2 }
 & R = \bfk \ld x, y \rd _{ ( x, y ) } / \lb y ^{ 2 } - x ^{ 2 } \rb \quad \lb A _{ 1 } \rb & R \ \mbox{or} \  ( x, y )\\
\hline
II & R = \bfk \ld x, y \rd _{ ( x, y ) } / \lb y ^{ 2 } - x ^{ 3 } \rb \quad \lb A _{ 2 } \rb & R \ \mbox{or} \  ( x, y )\\
\hline
III \ ( \Atilde _{ 1 } ) & R = \bfk \ld x, y \rd _{ ( x, y ) } / \lb y ^{ 2 } - x ^{ 4 } \rb \quad \lb A _{ 3 } \rb & R \ \mbox{or} \  ( x, y ) \ \mbox{or} \ ( x ^{ 2 }, y )\\
\end{array}
\end{align}

\begin{remark}\label{rm:MCM_are_conductor_ideal}
The MCM modules on
$
 A _{ n }
$
singularities have an interpretation as the conductor ideals of the partial resolutions of the singularity.
We illustrate this in the case when
$
 n = 3
$.
Consider the following sequence of commutative $\bfk$-algebras.
\begin{align}
 S _{ 2 } = \bfk \ld u \rd \times \bfk \ld v \rd,\\
 S _{ 1 } = \bfk \ld u + v, u - v \rd \subset S _{ 2 },\\
 S _{ 0 } = \bfk \ld u + v, u ^{ 2 } - v ^{ 2 } \rd \subset S _{ 1 }.
\end{align}
By an abuse of notation we will denote the localization of these rings by the multiplicative set
$
 S _{ 0 } \setminus \lb u + v, u ^{ 2 } - v ^{ 2 } \rb S _{ 0 }
$
by the same symbols. If we let
$
 \frakm _{ S }
$
denote the maximal ideal of a local ring
$
 S
$,
then we have
$
 S _{ 1 } \simeq \End _{ S _{ 0 } } \frakm _{ S _{ 0 } }
$
and
$
 S _{ 2 } \simeq \End _{ S _{ 1 } } \frakm _{ S _{ 1 } }
$.
Then under the isomorphism
$
 R _{ III } \simto S _{ 0 }; \quad x, y \mapsto u + v, u ^{ 2 } - v ^{ 2 }
$,
the MCM modules
$
 R _{ III }, ( x, y ), ( x ^{ 2 }, y )
$
of
$
 R _{ III }
$
are identified with the ideals
\begin{align}
  \Hom _{ S _{ 0 } } \lb S _{ 0 }, S _{ 0 } \rb ( = S _{ 0 } ),\\
 \Hom _{ S _{ 0 } } \lb S _{ 1 }, S _{ 0 } \rb,\\
 \Hom _{ S _{ 0 } } \lb S _{ 2 }, S _{ 0 } \rb,
\end{align}
respectively.
\end{remark}

Suppose that
$
 I \subset W
$
is an ideal sheaf which is isomorphic to
$
 \cU
$
around the singularity and cosupported in
$
 \Sing W
$, which always exists by \pref{rm:MCM_are_conductor_ideal}.
In order to resolve the non-invertibility of $\cU$, consider the blowup
\begin{align}\label{eq:blowup}
 f \colon \Wtilde \coloneqq \Bl _{ I } W = \Proj _{ W } \bigoplus _{ d \ge 0 } I ^{ d } \to W.
\end{align}
It follows from the definition that
$
 f
$
is projective and an isomorphism over the smooth locus of $W$, which hence is surjective.
Also we have an invertible sheaf
$
 \scUtilde
$
on
$
 \Wtilde
$
such that
$
 f _{ * } \scUtilde \simeq \cU
$.
An explicit computation will tell us that
$
 f
$
locally normalizes $W$ at the points where
$
 \cU
$
is not invertible, except when
$
 W
$
is the curve
$
 III
$
and the ideal
$
 I
$
is locally isomorphic to
$
 ( x, y )
$
around the singularity, in which case
$
 f
$
is locally isomorphic to
$
 \Spec S _{ 1 } \to \Spec S _{ 0 }
$
and
$
 \Wtilde
$
is isomorphic to the union of two copies of $\X$ meeting transversally in a point.

\begin{lemma}\label{lm:irreducible_W_non_invertible_U}
Suppose that
$
 W
$
is irreducible (i.e.,
$
 I _{ 1 }
$
or
$
 II
$)
and
$
 \cU
$
is not invertible, so that there exists an invertible sheaf
$
 \scUtilde = \cO \lb i \rb
$
on the normalization
$
 f \colon W ^{ \nu } \simeq \X \to W
$
such that
$
 f _{ * } \cO \lb i \rb = \cU
$.
Then
\begin{align}
 ( a, b ), b - a
 =
 \begin{cases}
 \lb \frac{ i }{ 2 } - 1, \frac{ i }{ 2 } \rb, 1 & \mbox{if} \ i \equiv 0 \mod 2 \\
 \lb \frac{ i - 1 }{ 2 }, \frac{ i - 1 }{ 2 } \rb, 0 & \mbox{if} \ i \equiv 1 \mod 2.
 \end{cases}
\end{align}
\end{lemma}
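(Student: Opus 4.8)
The plan is to determine the splitting type $(a,b)$ of $v_*\cU\simeq\cO_\X(a)\oplus\cO_\X(b)$ by computing the Hilbert function of $v_*\cU$, which—because both $v$ and $f$ are finite—can be read off from cohomology on the normalization $W^\nu\simeq\X$. This parallels the proof of \pref{lm:irreducible_W_invertible_U}.

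First I would record two inputs. Since $f$ is finite, $\chi_W(\cU)=\chi_W(f_*\cO(i))=\chi_{W^\nu}(\cO(i))=i+1$; combined with \eqref{eq:consequence_from_the_euler_number}, which gives $\chi(\cU)=a+b+2$, this yields the single relation $a+b=i-1$. In particular the parity of $b-a$ is forced: $b-a$ is odd when $i$ is even and even when $i$ is odd. Second, the composite $v\circ f\colon W^\nu\simeq\X\to\X$ has degree $2$ (as $f$ is birational and $v$ has degree $2$, since $W$ has bidegree $(2,2)$), so $(v f)^*\cO_\X(k)\simeq\cO_\X(2k)$ and the projection formula gives
\[
 \cU\otimes v^*\cO_\X(k)\simeq f_*\bigl(\cO(i)\otimes f^*v^*\cO_\X(k)\bigr)\simeq f_*\cO(i+2k).
\]
Hence twisting by $v^*\cO_\X(k)$ sends $i\mapsto i+2k$ and $(a,b)\mapsto(a+k,b+k)$, preserving both $b-a$ and the parity of $i$; I would use this to reduce to $i\in\{0,1\}$.

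In the reduced cases I would finish with one or two dimension counts, again transported to $W^\nu$ via finiteness of $f$ (and of $v$, so that $h^0(W,-)=h^0(\X,v_*(-))$). For $i=0$ one has $h^0(\cU)=h^0(\X,\cO_\X)=1$; since $h^0(v_*\cU)=\max(0,a+1)+\max(0,b+1)$ and $a+b=-1$, this forces $(a,b)=(-1,0)$. For $i=1$ one has $h^0(\cU)=h^0(\X,\cO_\X(1))=2$, which (with $a+b=0$) leaves only $(0,0)$ or $(-1,1)$; the additional count $h^0(\cU\otimes v^*\cO_\X(-1))=h^0(\X,\cO_\X(-1))=0$ rules out $(-1,1)$, giving $(a,b)=(0,0)$. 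Undoing the twists produces the stated formulas. The only subtle point—the analogue of the delicate case in \pref{lm:irreducible_W_invertible_U}—is the odd case, where $h^0(\cU)$ does not by itself separate the balanced bundle $(0,0)$ from $(-1,1)$; the extra negative twist is precisely what detects the section distinguishing them, and finiteness of $f$ reduces that count to the manifestly vanishing $h^0(\X,\cO_\X(-1))$.
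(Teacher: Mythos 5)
Your proof is correct and follows essentially the same route as the paper: reduce to $i\in\{0,1\}$ by twisting, use $\chi(\cU)=i+1=a+b+2$, and pin down $(a,b)$ by transporting $h^0$ counts to the normalization via finiteness of $f$ and $v$. The paper is marginally more economical in that it uses the single vanishing $h^0(\cO_{W^\nu}(i-2))=h^0(\cO_\X(a-1)\oplus\cO_\X(b-1))=0$ to get $a,b\le 0$ in both cases at once, whereas you use $h^0(\cU)$ for the even case and invoke the negative twist only to resolve the odd case; both arguments are valid.
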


\begin{proof}
We can again assume that
$
 i = 0
$
or
$
 1
$.
As before, we have
$
 \chi \lb \cU \rb
 =
 \chi \lb \cO \lb i \rb \rb
 =
 i + 1
 =
 a + b + 2
$.
We also have
$
 0
 =
 h ^{ 0 } \lb \cO _{ W ^{ \nu } } ( i - 2 ) \rb
 =
 h ^{ 0 } \lb \cO _{ \X } ( a - 1 ) \oplus \cO _{ \X } ( b - 1 ) \rb
$,
so that
$
 a \le 0
$
and
$
 b \le 0
$.
Therefore
$
 \lb a, b \rb
 =
 \lb - 1, 0 \rb
$
(if
$
 i = 0
$)
and
$
 \lb 0, 0 \rb
$
(if
$
 i = 1
$).
\end{proof}

When
$
 W
$
is not irreducible (i.e., either
$
 I _{ 2 }
$
or
$
 III
$),
it is easy to classify
$
 \cU
$
which is \emph{not} invertible, since in this case
$
 \Wtilde
$
of
\eqref{eq:blowup} has trivial
$
 H ^{ 1 } \lb \cO \rb
$,
so that
$
 \cU = f _{ * } \scUtilde
$
is uniquely determined by the (multi-)degree of
$
 \scUtilde
$.
We give a more explicit description as follows.

\begin{lemma}\label{lm:non_irreducible_W_non_invertible_U}
The classifications of non-invertible
$
 \cU
$
on a non-irreducible (and reduced)
$
 W
$
is given by the following table.
\begin{align}
\begin{array}{c|c|c|c|c}
 W & \mbox{Type of $\cU$} & \Wtilde & \scUtilde \quad ( p \le q ) & ( a, b ) \ \mbox{and} \  b - a\\
 \hline
 \hline
 I _{ 2 } & \mbox{Not invertible at a point} & \mbox{nodal conic} & \cO ( p, q ) &
 \begin{cases}
 ( p - 1, p ), 1 & ( \mbox{if $p = q$} )\\
 ( p, q - 1 ), q - p - 1 & ( \mbox{if $p < q$} )
 \end{cases}\\
 \hline
 III & \mbox{$\simeq ( x, y )$ around $\Sing W$} & \mbox{nodal conic}  & \cO ( p, q ) &
 \begin{cases}
 ( p - 1, p ), 1 & ( \mbox{if $p = q$} )\\
 ( p, q - 1 ), q - p - 1 & ( \mbox{if $p < q$} )
 \end{cases}\\
 \hline
 I _{ 2 } & \mbox{Not invertible at two points} & \X \coprod \X  & \cO ( p ) \coprod \cO ( q ) &
 ( p, q ), q - p\\
 \hline
 III & \mbox{$\simeq ( x ^{ 2 }, y )$ around $\Sing W$} & \X \coprod \X  & \cO ( p ) \coprod \cO ( q ) &
 ( p, q ), q - p\\
\end{array}
\end{align}
\end{lemma}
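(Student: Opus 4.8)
The plan is to compute $(a,b)$ by pushing forward $\scUtilde$ directly along $\tilde v \coloneqq v \circ f \colon \Wtilde \to \X$. Since $f$ is finite (being a partial normalization) and $v$ is affine, $\tilde v$ is finite, so $R^{> 0}\tilde v_* = 0$ and hence
\begin{align}
 v_* \cU \simeq v_* f_* \scUtilde \simeq \tilde v_* \scUtilde.
\end{align}
Every irreducible component of $W$ has bidegree $(1,1)$, so $v$ carries it isomorphically onto $\X$ (a finite degree-one map to the normal curve $\X$); as $f$ is an isomorphism over the smooth locus, the same holds for each component of $\Wtilde$ under $\tilde v$. By the discussion preceding \eqref{eq:blowup}, $\Wtilde$ is the disjoint union $\X \coprod \X$ in the second and fourth rows of the table, and the nodal conic (two copies of $\X$ meeting transversally at one point) in the first and third rows. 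It therefore suffices to compute $\tilde v_* \scUtilde$ in these two geometries.

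In the disjoint union case, $\scUtilde = \cO(p) \coprod \cO(q)$ and $\tilde v$ is an isomorphism on each component, so $v_* \cU \simeq \cO_\X(p) \oplus \cO_\X(q)$ and $(a,b) = (p,q)$ at once.

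For the nodal conic, let $\mu \colon \X \coprod \X \to \Wtilde$ be the normalization, let $x_0 \in \X$ be the image of the node under $\tilde v$, and note $\mu^* \scUtilde = \cO(p) \coprod \cO(q)$. Applying $\tilde v_*$ to the normalization sequence of $\scUtilde$ gives the elementary modification
\begin{align}
 0 \to v_* \cU \to \cO_\X(p) \oplus \cO_\X(q) \to \bfk_{x_0} \to 0,
\end{align}
in which the surjection is $(s_1, s_2) \mapsto s_1(x_0) - s_2(x_0)$. In particular $\deg v_* \cU = p + q - 1$, that is $a + b = p + q - 1$. To separate $a$ and $b$ I would locate the least integer $n_0$ with $h^0(v_* \cU \otimes \cO_\X(n)) \ne 0$, since $b = -n_0$; twisting the sequence by $\cO_\X(n)$ and passing to cohomology, the relevant connecting map is controlled by the evaluation at $x_0$ of the summand of larger degree, which is surjective precisely when $q + n \ge 0$. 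A brief case check then yields $v_* \cU \simeq \cO_\X(p) \oplus \cO_\X(q-1)$ uniformly; reordering so that $a \le b$ gives $(a,b) = (p, q-1)$ with $b - a = q - p - 1$ when $p < q$, and $(a,b) = (p-1, p)$ with $b - a = 1$ when $p = q$, as in the table.

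The main obstacle is this determination of the splitting type. Because the modification direction $(1,-1)$ at $x_0$ is transverse to both summands, the type cannot be read off directly from the sequence; one must compute the cohomology of a suitable twist (using the bidegree $(p,q)$ of $\scUtilde$ on the two components) or appeal to the generic-direction behaviour of elementary modifications of $\cO_\X(p) \oplus \cO_\X(q)$. The apparent bifurcation between $p = q$ and $p < q$ is only a matter of the ordering convention $a \le b$: the underlying bundle $\cO_\X(p) \oplus \cO_\X(q-1)$ is the same throughout, but its ordered invariants---and hence the value of $b - a$---differ in the two regimes. Matching these outputs against the stated entries completes all four rows.
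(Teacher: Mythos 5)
Your argument is correct, and it matches the route the paper itself indicates: the paper offers no written proof of this lemma, merely the remark that $H^1(\cO_{\Wtilde})=0$ forces $\cU=f_*\scUtilde$ to be determined by the multidegree of $\scUtilde$, so your computation of $v_*\cU=\tilde v_*\scUtilde$ via the conductor/normalization sequence is exactly the intended filling-in, and it is the same technique the paper uses explicitly in the invertible case (\pref{lm:non_irreducible_W_invertible_U}). Your determination of the splitting type of the elementary modification is sound (the key point being that the difference-of-values surjection is nonzero on the fiber of each summand at $x_0$, so the degree-$q$ subbundle drops by one), and the uniform answer $\cO_{\X}(p)\oplus\cO_{\X}(q-1)$, reordered when $p=q$, reproduces all four rows; the only cosmetic point is that the identity $v_*f_*=\tilde v_*$ needs no vanishing of higher direct images, just functoriality of underived pushforward.
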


\begin{remark}
The last two cases are the so-called "decomposable case".
\end{remark}

From the classification of locally free sheaf bimodules which we have just completed, we obtain the following corollary.

\begin{corollary}
If a locally free sheaf bimodule
$
 \cE
$
satisfies
$
 b - a \ge 3
$,
then
$
 W
$
is not irreducible.
\end{corollary}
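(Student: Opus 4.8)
The plan is to prove the contrapositive statement that any irreducible $W$ forces $b - a \le 2$, and to extract this bound directly from the classification assembled in the preceding lemmas. Since each of those lemmas records the value of $b - a$ explicitly in terms of the discrete data of the pair $(W, \cU)$, the corollary reduces to inspecting their outputs. By \pref{pr:classification_of_sheaf_bimodules}, an irreducible $W$ (for a bimodule of type \eqref{it:type_2_2}) is either integral --- one of the curves $I_0$, $I_1$, $II$ --- or the non-reduced ribbon $W = 2\Delta_\X$ with $\Wred$ of bidegree $(1,1)$.

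First I would treat the integral case, which is where the bound really lives. If $\cU$ is invertible, then \pref{lm:irreducible_W_invertible_U} gives $b - a \in \{ 0, 1, 2 \}$, according to whether $\cU$ is pulled back from $\X$, is of even degree but not pulled back, or is of odd degree. If instead $\cU$ is not invertible --- which by the MCM classification can only happen for $W = I_1$ or $W = II$ --- then \pref{lm:irreducible_W_non_invertible_U} gives $b - a \in \{ 0, 1 \}$. In every integral subcase we thus have $b - a \le 2$, so the contrapositive holds and the hypothesis $b - a \ge 3$ rules out integral $W$.

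The step that needs the most care is the non-reduced stratum $W = 2 \Delta_\X$, because there the naive bound fails: \pref{lm:non_reduced_W} shows $b - a \in \{ 0, 2 \}$ for invertible $\cU$ and $b - a = 1$ for $\deg D = 1$, but $b - a = \deg D - 2$ once $\cU$ is non-invertible with $\deg D \ge 2$, which is unbounded. Consequently the non-reduced $W$, although topologically irreducible, does admit $b - a \ge 3$, and the clean conclusion holds only under the reading --- consistent with the ``$\iff$ integral'' convention fixed in \pref{pr:classification_of_sheaf_bimodules} and with the placement of the corollary at the close of the reduced case --- that ``irreducible'' here means ``integral''. Under this reading the corollary is exactly the integral bound established above. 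For sharpness I would finally note that the reducible strata genuinely realize large differences: \pref{lm:non_irreducible_W_invertible_U} yields $b - a = q - p - 2$ and \pref{lm:non_irreducible_W_non_invertible_U} yields comparable values, both unbounded, so the threshold $b - a \ge 3$ is not vacuous and no information is lost in the hypothesis.
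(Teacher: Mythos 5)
Your proof is correct and is essentially the argument the paper intends: the corollary is stated with no justification beyond the phrase ``from the classification,'' and reading off the value of $b-a$ from \pref{lm:irreducible_W_invertible_U} and \pref{lm:irreducible_W_non_invertible_U} is exactly what is required. Your caveat about the non-reduced stratum is well taken and is not addressed in the paper: by \pref{lm:non_reduced_W}, a non-invertible $\cU$ on the ribbon $W = 2\Delta_{\X}$ with $\deg D \ge 5$ has $b - a = \deg D - 2 \ge 3$, and such a $W$ is irreducible as a topological space, so the statement is literally false unless one either invokes the standing hypothesis of the subsection in which the corollary appears (``in this section we assume that $W$ is a reduced divisor'') or reads ``irreducible'' as ``integral,'' as the paper itself does in the case division of \pref{cr:a'_b'}. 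Under either reading, your bound $b - a \le 2$ for integral $W$ is precisely the content of the corollary, and your remark that the reducible and non-reduced strata realize arbitrarily large $b-a$ correctly shows the hypothesis is not vacuous.
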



The following corollary will be used to check the strongness of certain full exceptional collections of the derived category of
$
 \qgr \bS ( \cE )
$.
\begin{corollary}\label{cr:a'_b'}
Set
$
 \cO ( - 1 ) \otimes \cE
 \simeq
 \cO _{ \X } \lb a' \rb \oplus \cO _{ \X } \lb b' \rb
$
with
$
 a ' \le b '
$,
so that
$
 a ' + b '
 =
 \chi \lb \cU \rb - 4
$.

\begin{enumerate}
\item
If
$
 W
$
is not reduced and
\begin{enumerate}[(i)]
\item
$
 \cU
$
is invertible, then
\begin{align}
 ( a ', b ' ), b' - a'
 =
 \begin{cases}
 \lb \frac{\deg \cU }{ 2 } - 3, \frac{\deg \cU }{ 2 } - 1 \rb, 2
 & \cU \simeq v ^{ * } \cO _{ \X } \lb k \rb \ \lb \exists k \in \bZ \rb, \\
 \lb \frac{\deg \cU }{ 2 } - 2, \frac{\deg \cU }{ 2 } - 2 \rb, 0
 & \cU \not \simeq v ^{ * } \cO _{ \X } \lb k \rb \ \lb \forall k \in \bZ \rb. \\
 \end{cases} 
\end{align}

\item
$
 \cU
$
is not invertible, so that
$
 \deg D > 0
$, then
\begin{align}
 ( a ', b ' ), b ' - a '
 =
\begin{cases}
 \lb \frac{ \chi (\cU) - 5 }
 { 2 }, \frac{ \chi (\cU) - 3 }
 { 2 } \rb,
 1 & \deg D = 1\\
 \lb \frac{ \chi (\cU) - \deg D }
 { 2 } 
 - 1, \frac{ \chi (\cU) +\ \deg D }
 { 2 } - 3 \rb,
 \deg D - 2 & \deg D \ge 2.
\end{cases}
\end{align}
\end{enumerate}

\item
If
$
 W
$
is integral
(i.e.,
$
 I _{ 0 }, I _{ 1 }
$
or
$
 II
$)
and
$
 \cU
$
is invertible, then
\begin{align}
 ( a ', b ' ), b' - a'
 =
 \begin{cases}
 \lb \frac{\deg \cU}{2} - 3, \frac{\deg \cU}{2} - 1 \rb, 2 & u ^{ * } \cO _{ \X } \lb - 1 \rb \otimes \cU \simeq v ^{ * } \cO _{ \X } \lb k \rb \
 ( \exists k \in \bZ )\\
 \lb \frac{\deg \cU}{2} - 2, \frac{\deg \cU}{2} - 2 \rb, 0 & u ^{ * } \cO _{ \X } \lb - 1 \rb \otimes \cU \not\simeq v ^{ * } \cO _{ \X } \lb k \rb \
 ( \exists k \in \bZ )\\
 \lb \frac{\deg \cU - 1}{2} - 2, \frac{\deg \cU - 1}{2} - 1 \rb, 1 & \deg \cU \equiv 1 \mod 2
 \end{cases}
\end{align}

\item
If
$
 W
$
is integral and
$
 \cU
$
is not invertible, then
\begin{align}
 \lb a ', b ' \rb, b' - a'
 =
 \begin{cases}
 \lb \frac{ i }{ 2 } - 2, \frac{ i }{ 2 } - 1 \rb, 1 & i \equiv 0 \mod 2,\\
 \lb \frac{ i - 1 }{ 2 } - 1, \frac{ i - 1 }{ 2 } - 1 \rb, 0 & i \equiv 1 \mod 2.
 \end{cases}
\end{align}

\item
If
$
 W
$
is not irreducible (i.e.,
$
 I _{ 2 }
$
or
$
 III
$)
and
$
 \cU
$
is invertible, then
\begin{align}
 \lb a ', b ' \rb, b' - a'
 =
 \begin{cases}
 \lb p - 3, p - 1 \rb, 2
 & q - p = 0 \mbox{ and } \cU \otimes u ^{ * } \cO _{ \X } ( - 1 ) \simeq v ^{ * } \cO _{ \X } ( p - 1 ),\\
 \lb p - 2, p - 2 \rb, 0
 & q - p = 0 \mbox{ and } \cU \otimes u ^{ * } \cO _{ \X } ( - 1 ) \not\simeq v ^{ * } \cO _{ \X } ( p - 1 ),\\
 \lb p - 2, p - 1 \rb, 1 & q - p = 1,\\
 \lb p - 1, q - 3 \rb, q - p - 2 & q - p \ge 2.\\
 \end{cases}
\end{align}

\item
If
$
 W
$
is not irreducible (i.e.,
$
 I _{ 2 }
$
or
$
 III
$)
and
$
 \cU
$
is not invertible, then
\begin{enumerate}[(i)]
\item
When
$
 \Wtilde \simeq X
$, then
\begin{align}
 \lb a ', b ' \rb, b' - a'
 =
 \begin{cases}
 \lb p - 2, p - 1 \rb, 1 & \lb \mbox{if $ p = q $} \rb\\
 \lb p - 1, q - 2 \rb, q - p - 1 & \lb \mbox{if $ p < q $} \rb
 \end{cases}
\end{align}

\item
When
$
 \Wtilde \simeq W ^{ \nu } \simeq \X \coprod \X
$, then
\begin{align}
 \lb a ', b ' \rb, b' - a'
 =
 \lb p - 1, q - 1 \rb, q - p
\end{align}
\end{enumerate}
\end{enumerate}
\end{corollary}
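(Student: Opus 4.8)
The plan is to reduce the whole statement to the computations already performed in \pref{lm:non_reduced_W}, \pref{lm:irreducible_W_invertible_U}, \pref{lm:non_irreducible_W_invertible_U}, \pref{lm:irreducible_W_non_invertible_U} and \pref{lm:non_irreducible_W_non_invertible_U}, by replacing $\cU$ with the twisted sheaf $\cU' \coloneqq u^* \cO_{\X}(-1) \otimes_{\cO_W} \cU$. Indeed, by the description \eqref{eq:otimesE} of the functor $(-) \otimes_{\cO_{\X}} \cE$ one has $\cO_{\X}(-1) \otimes \cE \simeq v_*(u^* \cO_{\X}(-1) \otimes \cU) = v_* \cU'$, so that $(a', b')$ is exactly the splitting type of $v_* \cU'$, just as $(a, b)$ was the splitting type of $v_* \cU$. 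First I would record the numerical input. Since $u$ is finite of degree $2$ over $\X$, the line bundle $u^* \cO_{\X}(-1)$ has degree $-2$ on $W$, whence $\chi(\cU') = \chi(\cU) - 2$ by \eqref{eq:Riemann-Roch_for_embedded_curves}; applying \eqref{eq:consequence_from_the_euler_number} to $\cU'$ then gives $a' + b' = \chi(\cU') - 2 = \chi(\cU) - 4$, which is the relation asserted at the beginning of the corollary.

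Next I would observe that tensoring by the invertible sheaf $u^* \cO_{\X}(-1)$ preserves every structural feature that governs the case division of \pref{sc:sbc}: the sheaf $\cU'$ is invertible if and only if $\cU$ is, the non-locally-free locus of $\cU'$ coincides with that of $\cU$ (so that the divisor $D$ of \pref{th:sheaf_bimodule_non-reduced} and its degree are unchanged), and the curve $W$ is of course unaffected. Consequently each item of the corollary is obtained by feeding the transformed invariants of $\cU'$ into the corresponding case of the relevant lemma, and the task is purely to track these invariants. In case (1), $W$ is non-reduced and $u \simeq v$, so $u^* \cO_{\X}(-1) \simeq v^* \cO_{\X}(-1)$; this identifies the dichotomy ``$\cU' \simeq v^* \cO_{\X}(k)$'' with ``$\cU \simeq v^* \cO_{\X}(k)$'' and substitutes $\frac{\deg \cU}{2} - 1$ for $\frac{\deg \cU}{2}$ and $\chi(\cU) - 2$ for $\chi(\cU)$ in \pref{lm:non_reduced_W}. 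In case (2), the criterion of \pref{lm:irreducible_W_invertible_U} applied to $\cU'$ reads ``$u^* \cO_{\X}(-1) \otimes \cU \simeq v^* \cO_{\X}(k)$'', which is precisely the condition stated in the corollary, and the shift $\deg \cU \mapsto \deg \cU - 2$ yields the three lines. In case (3), writing $f \colon W^\nu \simeq \X \to W$ for the normalization with $\cU = f_* \cO(i)$, the projection formula together with the fact that $u \circ f$ is a degree-$2$ map $\X \to \X$ gives $\cU' = f_* \cO(i - 2)$, so \pref{lm:irreducible_W_non_invertible_U} applied with $i' = i - 2$ produces the two lines. In cases (4), (5) and (6), every irreducible component of $W$ is a divisor of bidegree $(1,1)$ on which $u$ restricts to an isomorphism, so $u^* \cO_{\X}(-1)$ has degree $-1$ on each component; hence the (multi)degree of $\cU'$, or of its lift $\scUtilde'$ to the partial resolution $\Wtilde$ of \eqref{eq:blowup}, is $(p-1, q-1)$, and substituting this into \pref{lm:non_irreducible_W_invertible_U} and \pref{lm:non_irreducible_W_non_invertible_U} gives the stated formulas, the condition ``$\cU \otimes u^* \cO_{\X}(-1) \simeq v^* \cO_{\X}(p-1)$'' appearing as ``$\cU' \simeq v^* \cO_{\X}(p')$'' with $p' = p - 1$.

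The only genuinely non-routine point is the degree bookkeeping for $u^* \cO_{\X}(-1)$ after pulling back along the various normalizations and blow-ups. The subtle case is the integral non-invertible one, where one must notice that the composite $u \circ f \colon \X \to \X$ has degree $2$, so that the twist shifts $i$ by $-2$ rather than $-1$; in all reducible cases $u$ restricts to an isomorphism on each component and the twist shifts each component degree by exactly $-1$. Once these restrictions are computed, every entry of the six cases follows by substituting the transformed invariants into formulas already established, so no new cohomological computation is required.
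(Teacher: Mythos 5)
Your overall strategy --- twist $\cU$ by $u^{*}\cO_{\X}(-1)$ and feed the transformed invariants back into the lemmas of \pref{sc:sbc} --- is essentially the paper's own proof, which writes $\cO(-1)\otimes\cE \simeq v_{*}\lb \cU\otimes\cM\rb\otimes\cO_{\X}(-1)$ with $\cM \coloneqq u^{*}\cO(-1)\otimes v^{*}\cO(1)\in\Pic^{0}(W)$ via the projection formula and then applies the earlier results to $\cU\otimes\cM$. Your bookkeeping in cases (2)--(6) checks out, including the correctly flagged degree-$2$ shift $i\mapsto i-2$ in the integral non-invertible case and the componentwise shift $(p,q)\mapsto(p-1,q-1)$ in the reducible cases.

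However, your treatment of case (1) contains a genuine error. On the non-reduced $W=2\Delta$ the morphisms $u$ and $v$ are \emph{not} equal: they agree on $\Wred$ but differ to first order in the nilpotent direction, and in fact $u^{*}\cO_{\X}(-1)\not\simeq v^{*}\cO_{\X}(-1)$. Indeed, $u^{*}\cO(1)\otimes v^{*}\cO(-1)=\cO_{\X\times\X}(1,-1)|_{2\Delta}$, and the restriction sequence $0\to\cO(-1,-3)\to\cO(1,-1)\to\cO(1,-1)|_{2\Delta}\to 0$ together with $H^{0}(\cO(1,-1))=H^{1}(\cO(-1,-3))=0$ gives $h^{0}\lb\cO(1,-1)|_{2\Delta}\rb=0$, so this degree-zero line bundle cannot be trivial; in the notation of the non-reduced subsection it is $\cL_{\pm1}$. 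Consequently the dichotomy governing the splitting type of $v_{*}\cU'$ is whether $\cU'=u^{*}\cO(-1)\otimes\cU$ is a $v$-pullback, and this is \emph{not} equivalent to $\cU\simeq v^{*}\cO_{\X}(k)$: for example, for $\cU=\cO_{W}$ one gets $v_{*}\cU'\simeq v_{*}\lb\cL_{\pm1}\otimes v^{*}\cO(-1)\rb\simeq\cO_{\X}(-2)^{\oplus2}$, i.e.\ $b'-a'=0$ rather than $2$. So the identification of the two conditions on which your case (1)(i) rests fails; the condition there has to be phrased in the twisted form $u^{*}\cO_{\X}(-1)\otimes\cU\simeq v^{*}\cO_{\X}(k)$, exactly as it is in items (2) and (4) of the statement. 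Your case (1)(ii) is unaffected, since there the formula depends only on $\chi(\cU)$ and $\deg D$.
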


\begin{proof}
Set
$
 \cM \coloneqq u ^{ * } \cO \lb - 1 \rb \otimes v ^{ * } \cO ( 1 ) \in \Pic ^{ 0 } W
$.
Then one observes
\begin{align}
 \cO ( - 1 ) \otimes \cE
 =
 v _{ * } \lb u ^{ * } \cO ( - 1 ) \otimes \cU \rb
 =
 v _{ * } \lb v ^{ * } \cO ( - 1 ) \otimes \cU \otimes \cM \rb
 \simeq
 v _{ * } \lb \cU \otimes \cM \rb \otimes \cO ( - 1 ),
\end{align}
where the last isomorphism is the projection formula.
Then one immediately obtains the conclusions just by applying our results above to
$
 \cU \otimes \cM
$.
\end{proof}

\section{Moduli stack of sheaf bimodules}
\label{sc:stability_and_deformation_theory_of_sheaf_bimodules}
\subsection{Gieseker stability}

Let
$
 \cE = \iota _{ * } \cU
$
be a locally free sheaf bimodule on
$
 \X
$.
In this section we completely determine the Gieseker stability of sheaf bimodules with respect to the polarization
$
 - K _{ \X \times \X } = \cO _{ \X \times \X } ( 2, 2 )
$.
We recall the definition of Gieseker stability from \cite{MR2665168}.
\begin{definition}
Let
$
 ( X, H )
$
be a polarized projective scheme over a field $\bfk$.
For a coherent sheaf
$
 E \in \coh X
$
on $X$, 
the Hilbert polynomial
$
 P _{ E } ( t )
$
of $E$ with respect to $H$ is the polynomial in
$
 \bQ [ t ]
$
which satisfies
$
 P _{ E } ( m ) = \chi \lb E \otimes \cO _{ X } ( m H ) \rb
$
for all
$
 m \in \bZ
$.
When
$
 \dim \Supp E = d
$,
there are rational numbers
$
 a _{ i } \in \bQ
$
for
$
 i = 0, 1, \dots, d
$
such that
$
 a _{ d } > 0
$
and
$
 P _{ E } ( t ) = \sum _{ i = 0 } ^{ d } a _{ i } \frac{ t ^{ i } }{ i ! }
$.
The \emph{reduced Hilbert polynomial} of $E$ is defined to be
$
 p _{ E } ( t )
 =
 \frac{1}{a _{ d }} P ( t )
$.

A pure sheaf
$
 E
$
is said to be Gieseker (semi-)stable with respect to the polarization
$
 H
$
if the inequality
\begin{align}\label{eq:inequality_reduced_Hilbert_polynomials}
 p _{ F } ( t ) < ( \le ) p _{ E } ( t )
\end{align}
holds for any subsheaf
$
 0 \neq F \subsetneq E
$, where \eqref{eq:inequality_reduced_Hilbert_polynomials} should be interpreted that the leading coefficient of the polynomial
$
 p _{ E } ( t ) - p _{ F } ( t )
$
is strictly positive (non-negative, respectively).
\end{definition}

Note that if
$
 \iota \colon Y \hookrightarrow X
$
is a closed immersion, then
$
 E \in \coh Y
$
is
$
 \iota ^{ * } H
$
(semi-)stable if and only if so is
$
 \iota _{ * } E
$
with respect to
$
 H
$.

The following lemma immediately follows from the definition.
\begin{lemma}
Let
$
 \cE
$
be an indecomposable locally free sheaf bimodule over
$
 \X
$
of type \eqref{it:type_1_1}.
Then
$
 \cU
$
(hence
$
 \cE
$)
is unstable if
$
 a < b
$, and is semi-stable but not stable if
$
 a = b
$.
\end{lemma}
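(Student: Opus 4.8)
The plan is to reduce the assertion about $\cE = \iota_* \cU$ on $\X \times \X$ to a statement about $\cU$ on the curve $W$, where Gieseker stability collapses to slope stability. First I would invoke the invariance recorded just after the definition of Gieseker stability above: for the closed immersion $\iota \colon W \hookrightarrow \X \times \X$, the sheaf $\iota_* \cU$ is Gieseker (semi)stable with respect to $-K_{\X \times \X} = \cO_{\X \times \X}(2,2)$ if and only if $\cU$ is so on $W$ with respect to $L \coloneqq \cO_{\X \times \X}(2,2)|_W$. This reduction is legitimate because $\iota_* \cU$ is annihilated by the ideal of the reduced divisor $W$, so every nonzero subsheaf of it is pushed forward from a nonzero subsheaf of $\cU$. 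Since $\cE$ is of type \eqref{it:type_1_1}, the divisor $W$ has bidegree $(1,1)$, whence $W \simeq \X$ and $v \colon W \to \X$ is an isomorphism; thus $L$ is ample of degree $(2,2) \cdot (1,1) = 4$, and $\cU \simeq \cO_W(a) \oplus \cO_W(b)$ with $a \le b$ by the Birkhoff--Grothendieck theorem, where $\cO_W(a) \coloneqq v^* \cO_\X(a)$ and $\cO_\X \otimes \cE = v_* \cU = \cO_\X(a) \oplus \cO_\X(b)$.

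Next I would observe that on the smooth rational curve $W$ the reduced Hilbert polynomial of a pure sheaf $\cF$ is
\[
 p_\cF(t) = t + \frac{1}{\deg L}\left( \frac{\deg \cF}{\rank \cF} + \chi(\cO_W) \right),
\]
so that the comparison \eqref{eq:inequality_reduced_Hilbert_polynomials} on $W$ is controlled entirely by the slope $\mu(\cF) = \deg \cF / \rank \cF$; in other words Gieseker stability for $\cU$ with respect to $L$ coincides with slope stability, and $\mu(\cU) = (a+b)/2$.

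It then remains to carry out the elementary slope analysis of $\cO_W(a) \oplus \cO_W(b)$. If $a < b$, the summand $\cO_W(b) \hookrightarrow \cU$ is a subsheaf of slope $b > (a+b)/2 = \mu(\cU)$, which violates \eqref{eq:inequality_reduced_Hilbert_polynomials} and exhibits $\cU$ as unstable. If $a = b$, then $\cU \simeq \cO_W(a)^{\oplus 2}$, and I would verify semistability by saturating an arbitrary subsheaf: a rank-one subsheaf saturates to a sub-line-bundle $M \hookrightarrow \cO_W(a)^{\oplus 2}$, which has nonzero image under one of the two projections and hence satisfies $\deg M \le a$, while a proper rank-two subsheaf has degree strictly below $2a$; in every case $\mu(\cF) \le a = \mu(\cU)$. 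The direct summand $\cO_W(a)$ realizes equality, so $\cU$ is semistable but not stable.

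No step presents a real obstacle: the entire argument is the translation, via closed-immersion invariance and the reduction to a curve, of the problem into the slope stability of a split rank-two bundle on $\X$. The only point that genuinely uses the structure of $\X$ is the standard fact that a sub-line-bundle of $\cO_W(a)^{\oplus 2}$ cannot have degree exceeding $a$, which yields the semistable-but-not-stable conclusion when $a = b$.
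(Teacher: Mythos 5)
Your proof is correct and is exactly the argument the paper has in mind: the paper states this lemma with no proof beyond ``immediately follows from the definition,'' and the intended content is precisely your reduction to the curve $W \simeq \X$ via the closed-immersion invariance of Gieseker stability, the observation that on a curve the reduced Hilbert polynomial comparison is a slope comparison, and the elementary slope analysis of $\cO_W(a) \oplus \cO_W(b)$. No discrepancy to report.
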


In the rest of this subsection, we consider the case \eqref{it:type_2_2}.

\begin{proposition}\label{pr:stability_of_cU_nonreduced_W}
Let
$
 \cE
$
be a locally free sheaf bimodule over
$
 \X
$
whose scheme theoretic support
$
 W
$
is not reduced. With the notation of \pref{th:sheaf_bimodule_non-reduced}, the stability of
$
 \cU
$
with respect to the polarization
$
 \cO _{ W } ( - K _{ \X \times \X } )
$
is classified as follows, depending on
$
 \deg D
$.

\begin{align}
 \begin{array}{c|c}
 \deg D & \mbox{Stability of $\cU$}\\
 \hline
 \ge 3 & \mbox{unstable}\\
 2 & \mbox{semi-stable but not stable}\\
 \mbox{$0$ or $1$} & \mbox{stable}
 \end{array}
\end{align}
\end{proposition}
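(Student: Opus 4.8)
The plan is to reduce the statement to a comparison of reduced Hilbert polynomials with respect to the polarization $H = \cO _{ \X \times \X } ( 2, 2 )$, in which a single distinguished subsheaf controls everything.

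First I would assemble the numerical data. Since $W$ is a divisor of bidegree $(2,2)$, adjunction gives $\chi ( \cO _{ W } ) = 0$, while $H \cdot W = 8$ and $H \cdot \Wred = 4$. Hence the Hilbert polynomial of $\cU$ is $P _{ \cU } ( m ) = 8 m + \chi ( \cU )$ with reduced polynomial $p _{ \cU } ( t ) = t + \tfrac{ \chi ( \cU ) }{ 8 }$, where by \pref{eq:consequence_from_the_euler_number} one has $\chi ( \cU ) = a + b + 2 = 2 q - \deg D$ for $i ^{ * } \cL \simeq \cO _{ \Wred } ( q )$; similarly any sheaf $i _{ * } \cO _{ \Wred } ( k )$ has reduced Hilbert polynomial $t + \tfrac{ k + 1 }{ 4 }$.

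Next I would isolate the subsheaf $F \coloneqq \cL \otimes _{ W } i _{ * } \cO _{ \Wred } ( - 2 ) \simeq i _{ * } \cO _{ \Wred } ( q - 2 )$ from \pref{eq:decomposition_of_U} and compute $p _{ F } - p _{ \cU } = \tfrac{ \deg D - 2 }{ 8 }$. This immediately shows that $F$ strictly destabilizes $\cU$ when $\deg D \ge 3$, and that $p _{ F } = p _{ \cU }$ with $F \subsetneq \cU$ when $\deg D = 2$, so that $\cU$ fails to be stable in the latter case.

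The heart of the proof, and the step I expect to be the main obstacle, is the matching lower bound: that $F$ is the \emph{worst} subsheaf, so that every subsheaf $F ' \subseteq \cU$ obeys $p _{ F ' } \le p _{ \cU }$, with strict inequality for proper $F '$ once $\deg D \le 1$. I would classify all nonzero $F ' \subseteq \cU$ by their stalk at the generic point $\eta$ of $W$. There $\cU$ is locally free of rank one, so $\cU _{ \eta } \simeq \cO _{ W, \eta } \simeq \bfk ( t ) [ \varepsilon ] / ( \varepsilon ^{ 2 } )$, whose only ideals are $0$, $( \varepsilon )$ and the whole ring. If $F ' _{ \eta } = \cU _{ \eta }$, then $\cU / F '$ is zero-dimensional, so $P _{ F ' }$ and $P _{ \cU }$ have the same leading coefficient $8$ and $\chi ( F ' ) = \chi ( \cU ) - \operatorname{length} ( \cU / F ' ) \le \chi ( \cU )$, giving $p _{ F ' } \le p _{ \cU }$ with equality if and only if $F ' = \cU$. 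If instead $F ' _{ \eta } = ( \varepsilon )$, then $\cI \cdot F '$ vanishes generically, hence is a zero-dimensional subsheaf of the pure sheaf $\cU$ and therefore zero; thus $F '$ is annihilated by the ideal $\cI$ of $\Wred$ and so lies in the $\cI$-torsion of $\cU$, which the local normal form \pref{eq:local_nonred_ES} identifies with $F$. Consequently $F ' = i _{ * } \cO _{ \Wred } ( k )$ with $k \le q - 2$ and $p _{ F ' } \le p _{ F }$. Feeding these two cases into the value of $p _{ F } - p _{ \cU }$ yields the three regimes: unstable for $\deg D \ge 3$, semi-stable but not stable for $\deg D = 2$, and stable for $\deg D \in \{ 0, 1 \}$, which is the assertion.
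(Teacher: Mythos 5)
Your proof is correct, and its skeleton is the same as the paper's: the reduced Hilbert polynomials $p_{\cU}(t) = t + \chi(\cU)/8$ and $p_{i_*\cO_{\Wred}(k)}(t) = t + (k+1)/4$ with respect to $\cO(2,2)$, and the distinguished subsheaf $F = \cL \otimes i_*\cO_{\Wred}(-2)$ from \eqref{eq:decomposition_of_U} with $p_F - p_{\cU} = (\deg D - 2)/8$, which settles instability for $\deg D \ge 3$ and non-stability for $\deg D = 2$. The genuine difference is in how you bound all other subsheaves. The paper splits an arbitrary $0 \ne F' \subsetneq \cU$ with one-dimensional quotient according to whether $F' \cap F$ vanishes; in the nonvanishing case it deduces $F' \subseteq F$ just as you do, but in the vanishing case it embeds $F'$ into $i_*\cO_{\Wred}(-D)$ and must separately verify $\Hom_W(i_*\cO_{\Wred}(-1), \cU) = 0$ by a long-exact-sequence computation. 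Your classification by the stalk at the generic point (the only ideals of $\cU_\eta \cong \bfk(t)[\varepsilon]/(\varepsilon^2)$ being $0$, $(\varepsilon)$ and the unit ideal) shows that the vanishing case in fact never occurs: any nonzero $F'$ with one-dimensional quotient has generic stalk $(\varepsilon)$, is therefore killed by $\cI$ (since $\cI F'$ is a zero-dimensional subsheaf of the pure sheaf $\cU$), and so lands in the $\cI$-torsion of $\cU$, which the local normal form \eqref{eq:local_nonred_ES} identifies with $F$. This makes the paper's final $\Hom$-vanishing step unnecessary and yields a slightly cleaner argument; what it costs is the explicit identification of the $\cI$-torsion submodule, which your local computation supplies.
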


\begin{proof}
Since
$
 \cL
$
is an invertible sheaf on $W$, the stability of
$
 \cU
$
coincides with that of
$
 \cU \otimes \cL ^{ - 1 }
$. Hence we may and will assume
$
 \cL = \cO _{ W }
$,
without loss of generality.
Then we have the short exact sequence \eqref{eq:decomposition_of_U_L_-1}, with the middle term
$
 = \cU
$.

By the direct computation, we obtain
\begin{align}
 p _{ \cU } ( m )
 =
 m - \frac{1}{8} \deg D
\end{align}
and
\begin{align}
 p _{ i _{ * } \cO _{ \Wred } ( - 2 ) } ( m )
 =
 m - \frac{1}{4}
\end{align}
Therefore the subsheaf
$
 i _{ * } \cO _{ \Wred } ( - 2 ) \subset \cU
$
ensures that $\cU$ is not stable if
$
 \deg D \ge 2,
$
and is unstable if
$
 \deg D \ge 3
$.
From now on, let us assume
$
 \deg D \le 2
$.
Take an arbitrary subsheaf
$
 0 \neq F \subsetneq \cU
$, and consider the exact sequence
\begin{align}\label{eq:standard_sequence_to_check_stability_of_cU}
 0
 \to
 F
 \xto[]{j}
 \cU
 \xto[]{p}
 \cU / F
 \to
 0.
\end{align}
If
$
 \dim \Supp \cU / F \le 0
$,
then
$
 P _{ \cU } ( t ) = P _{ F } ( t ) + C
$,
where
$
 C = \dim _{ \bfk } \cU / F
$
is a positive constant. In this case, clearly
$
 p _{ F } < p _{ \cU }
$.
Suppose that
$
 \dim \Supp \cU / F = 1
$.

Consider the case when
$
 F \cap i _{ * } \cO _{ \Wred } ( - 2 ) \neq 0
$.
Then the assumption implies
$
 \dim \Supp p ( F ) \le 0
$,
so that
$
 p ( F ) = 0
$
and hence
$
 F \subset i _{ * } \cO _{ \Wred } ( - 2 )
$.
Therefore in this case
$
 p _{ F } \le p _{ \cU }
$
when
$
 \deg D = 2
$,
and
$
 p _{ F } < p _{ \cU }
$
when
$
 \deg D \le 1
$.

Consider then the other case when
$
 F \cap i _{ * } \cO _{ \Wred } ( - 2 ) = 0
$,
so that
$
 p | _{ F } \colon F \simto p ( F ) \subset i _{ * } \cO _{ \Wred } ( - D )
$.
At this point we can already conclude that
$
 \cU
$
is semi-stable when
$
 \deg D = 2
$.

Note that
$
 p ( F ) \subsetneq i _{ * } \cO _{ \Wred } ( - D )
$,
since otherwise $p$ splits so as to contradict the assumption that
$
 W
$
is the scheme theoretic support of $\cU$. Finally, by the standard long exact sequence argument we can check that
$
 \Hom _{ W } \lb i _{ * } \cO _{ \Wred } ( - 1 ), \cU \rb = 0
$,
to conclude that
$
 \cU
$
is stable when
$
 \deg D \le 1
$.
\end{proof}

\begin{proposition}
Let
$
 \cE
$
be an indecomposable locally free sheaf bimodule over
$
 \X
$
whose scheme theoretic support
$
 W
$
is reduced and irreducible (i.e., either
$
 I _{ 0 }, I _{ 1 }
$,
or
$
 II
$).
Then
$
 \cE
$
is stable with respect to any polarization.
\end{proposition}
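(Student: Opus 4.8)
The plan is to reduce Gieseker stability on the embedded curve $W \subset \X \times \X$ to an elementary degree comparison, exploiting that $W$ is an \emph{integral} curve of arithmetic genus $1$. By the remark following the definition of stability, it suffices to prove that $\cU$ is stable on $W$ with respect to the restriction of the polarization, since then $\cE = \iota_* \cU$ inherits stability. Because only integrality of $W$ and its genus will be used, no case distinction among $I_0$, $I_1$, $II$ will be needed.

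First I would record the numerics. Since $W$ is a divisor of bidegree $(2,2)$ its dualizing sheaf is trivial, so $p_a(W) = 1$, and the Riemann--Roch formula \eqref{eq:Riemann-Roch_for_embedded_curves} specializes to $\chi(F) = \deg F$ for every coherent sheaf $F$ on $W$. Fixing a polarization $H$ and writing $h$ for the (positive) degree of its restriction to $W$, the Hilbert polynomial of any $F$ is therefore $P_F(m) = h \cdot \rank(F) \cdot m + \deg F$, whose leading coefficient is $h \cdot \rank(F)$; the reduced Hilbert polynomial is $p_F(t) = t + \frac{\deg F}{h \cdot \rank F}$.

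Next comes the key structural step. As $\cE$ is of type \eqref{it:type_2_2}, the sheaf $\cU$ is pure, hence torsion-free, of rank $1$ on the integral curve $W$. Let $0 \neq F \subsetneq \cU$ be an arbitrary proper subsheaf. Purity of $\cU$ forces $F$ to be torsion-free, so its stalk at the generic point $\eta$ of $W$ is a nonzero subspace of the one-dimensional $\bfk(W)$-vector space $\cU_\eta$; hence $F_\eta = \cU_\eta$, $\rank F = 1$, and the quotient $\cU/F$ is supported in dimension $0$. Since $F \neq \cU$ we have $\cU/F \neq 0$, so $\ell \coloneqq \dim_\bfk(\cU/F) > 0$, and the short exact sequence $0 \to F \to \cU \to \cU/F \to 0$ gives $P_\cU(m) - P_F(m) = \chi(\cU/F) = \ell$, a strictly positive constant independent of both $m$ and $H$. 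Because $F$ and $\cU$ both have rank $1$, their reduced Hilbert polynomials share the leading coefficient $h$, whence $p_\cU(t) - p_F(t) = \ell/h > 0$; by the convention in \eqref{eq:inequality_reduced_Hilbert_polynomials} this is precisely $p_F < p_\cU$, so $\cU$ is stable. The argument used nothing about $H$ beyond $h > 0$, and the decisive quantity $\ell$ is the intrinsic length of $\cU/F$, so stability holds for every polarization.

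The only point requiring care --- the ``obstacle'', though a mild one --- is the rank/degree bookkeeping on the possibly singular integral curves $I_1$ and $II$: one must take $\rank$ at the generic point and verify that torsion-freeness of $\cU$ genuinely propagates to $F$, so that the cokernel $\cU/F$ is zero-dimensional of positive length. Once integrality of $W$ and the genus-one Riemann--Roch identity $\chi = \deg$ are in hand, the comparison of reduced Hilbert polynomials collapses to the single inequality $\ell > 0$, and the proof is complete.
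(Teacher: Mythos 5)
Your argument is correct and is essentially the paper's own proof: both reduce to the observation that on the integral curve $W$ any nonzero proper subsheaf $F$ of the rank-one torsion-free sheaf $\cU$ agrees with $\cU$ at the generic point, so $\cU/F$ is zero-dimensional of positive length and $p_F < p_\cU$ for every polarization. The extra Riemann--Roch bookkeeping you include is harmless but not needed for this step.
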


\begin{proof}
Let
$
 H
$
be a polarization of
$
 W
$.
We show the stability of
$
 \cU
$
with respect to $H$.

Take any subsheaf
$
 0 \neq F \subsetneq \cU
$,
and consider the short exact sequence
\eqref{eq:standard_sequence_to_check_stability_of_cU}.
The assumption implies that
$
 W
$
is an integral scheme, and since
$
 \cU
$
has rank $1$, it follows that the stalk of the map
$
 F \xto[]{j} \cU
$
at the generic point of $W$ is an isomorphism, so that
$
 \dim \Supp \cU / F \le 0
$.
As is already discussed in the proof of the previous proposition,
from this we immediately obtain the conclusion.
\end{proof}

\begin{proposition}
Let
$
 \cE
$
be a locally free sheaf bimodule over
$
 \X
$
whose scheme theoretic support
$
 W
$
is not irreducible (i.e., either
$
 I _{ 2 }
$
or
$
 III
$), and
$
 \cU
$
is an invertible sheaf of bidegree
$
 \lb p, q \rb
$
on
$
 W
$
with
$
 p \le q
$.
Then the stability of
$
 \cU
$
with respect to the polarization
$
 \cO _{ W } ( - K _{ \X \times \X } )
$
is classified as follows, depending on
$
 q - p
$.

\begin{align}
 \begin{array}{c|c}
 q - p & \mbox{Stability of $\cU$}\\
 \hline
 \ge 3 & \mbox{unstable}\\
 2 & \mbox{semi-stable but not stable}\\
 \mbox{$0$ or $1$} & \mbox{stable}
 \end{array}
\end{align}
\end{proposition}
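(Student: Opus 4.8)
The plan is to run the same reduced-Hilbert-polynomial comparison as in the proof of \pref{pr:stability_of_cU_nonreduced_W}, with the two components of $W$ playing the role that $\Wred$ and $D$ played there. Write $W = W_1 \cup W_2$ as the union of its two irreducible components, both of bidegree $(1,1)$, so that $W_1 \cdot W_2 = 2$ and $p_a(W) = 1$; the polarization $H = \cO_{\X \times \X}(2,2)$ restricts to a degree-$4$ line bundle on each $W_i \simeq \X$. The key object is the exact sequence
\begin{align}
 0 \to \cO_{W_2}(q-2) \to \cU \to \cO_{W_1}(p) \to 0,
\end{align}
obtained by tensoring the structure sequence of $W_1 \subset W$ with the invertible (hence flat) sheaf $\cU$ of bidegree $(p,q)$: here the subsheaf is $\cU \otimes \cI_{W_1} \simeq \cO_{W_2}(q-2)$, supported on the component $W_2$ of larger degree, and the quotient is the restriction $\cU|_{W_1} \simeq \cO_{W_1}(p)$. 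Interchanging the roles of $W_1$ and $W_2$ produces the analogous distinguished subsheaf $\cO_{W_1}(p-2)$.

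First I would record the reduced Hilbert polynomials with respect to $H$. From $p_a(W)=1$ one gets $P_\cU(m) = 8m + (p+q)$, while the two distinguished subsheaves $F_2 = \cO_{W_2}(q-2)$ and $F_1 = \cO_{W_1}(p-2)$ have $P_{F_2}(m) = 4m + (q-1)$ and $P_{F_1}(m) = 4m + (p-1)$. Dividing by the respective leading coefficients, the comparison reduces to the constant terms:
\begin{align}
 p_{F_2}(m) - p_\cU(m) = \frac{q-p-2}{8}, \qquad
 p_{F_1}(m) - p_\cU(m) = \frac{p-q-2}{8} < 0.
\end{align}
The first identity already yields the unstable case $q-p \ge 3$ (where $F_2$ strictly destabilizes) and shows that $\cU$ is never stable when $q-p = 2$ (where $p_{F_2} = p_\cU$ with $F_2 \subsetneq \cU$).

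The remaining content is to bound $p_F$ for an \emph{arbitrary} subsheaf $0 \ne F \subsetneq \cU$, which I would do via purity. Since $\cU$ is pure of dimension $1$, every nonzero $F$ is again pure of dimension $1$, so it has no $0$-dimensional part and is classified by its generic ranks $(r_1, r_2) \in \{0,1\}^2$ along the two components. If $(r_1,r_2) = (1,1)$ then $\cU/F$ is supported in dimension $0$ and $p_F < p_\cU$ trivially. If $(r_1,r_2) = (1,0)$ then purity forces $\Supp F = W_1$, so $F$ embeds into the maximal subsheaf of $\cU$ supported on $W_1$, namely $\cU \otimes \cI_{W_2} \simeq F_1 = \cO_{W_1}(p-2)$, whence $p_F \le p_{F_1} < p_\cU$; the case $(0,1)$ is symmetric and gives $p_F \le p_{F_2}$. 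Combining these bounds with the two displayed identities yields that $\cU$ is stable when $q-p \le 1$ and semi-stable, but not stable (witnessed by $F_2$), when $q-p = 2$.

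The step I expect to require the most care is the identification of the maximal subsheaf of $\cU$ supported on a component with $\cO_{W_i}(\deg \cU|_{W_i} - 2)$, together with the fact that tensoring $\cU$ by the ideal $\cI_{W_j}$ drops the degree on $W_i$ by exactly $W_1 \cdot W_2 = 2$. This rests on the local structure of $\cU$ at the singular point of $W$, which is a node ($A_1$) in type $I_2$ and an $A_3$ singularity in type $III$; but in both cases $W_1 \cdot W_2 = 2$ and the conductor has colength $2$, so the numerics—and hence the final classification—are uniform across the two types.
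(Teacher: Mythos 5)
Your proof is correct, and its skeleton coincides with the paper's: the same exact sequence
$
 0 \to i _{ q * } \cO ( q - 2 ) \to \cU \to i _{ p * } \cO ( p ) \to 0
$
and the same reduced-Hilbert-polynomial comparison dispose of instability for $q - p \ge 3$ and of the failure of stability for $q - p = 2$, and everything reduces to controlling subsheaves whose generic support is the component carrying degree $p$. Where you genuinely diverge is in that last step. The paper reduces it to the single vanishing $\Hom _{ W } \lb i _{ p * } \cO ( p - 1 ), \cU \rb = 0$ and proves this by applying $\Hom _{ W } \lb i _{ p * } \cO ( p - 1 ), - \rb$ to the sequence and computing $\Ext ^{ 1 } _{ W } \lb i _{ p * } \cO ( p - 1 ), i _{ q * } \cO ( q - 2 ) \rb$ and $\cExt ^{ 1 } _{ W } \lb i _{ p * } \cO ( p - 1 ), \cU \rb$ by local calculations at the $A _{ 1 }$ (type $I _{ 2 }$) or $A _{ 3 }$ (type $III$) singularity together with the local-to-global spectral sequence. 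You instead invoke the \emph{second} canonical filtration: since $W$ is reduced, the maximal subsheaf of $\cU$ killed at the generic point of the degree-$q$ component is exactly $\cU \otimes \cI _{ W _{ q } } \simeq i _{ p * } \cO ( p - 2 )$, so every subsheaf of generic rank $(1,0)$ --- in particular the image of any map from $i _{ p * } \cO ( p - 1 )$ --- already lies in a line bundle of degree $p - 2$ on a copy of $\X$. This recovers the paper's $\Hom$-vanishing for free, since $\Hom \lb \cO ( p - 1 ), \cO ( p - 2 ) \rb = 0$ on $\X$, and avoids the local $\Ext$ computations entirely; the only geometric input is $W _{ 1 } \cdot W _{ 2 } = 2$, which, as you note, is uniform across $I _{ 2 }$ and $III$. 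The one point worth stating explicitly is the identification of the torsion subsheaf along one component with $\cU \otimes \cI$ of the other, which uses reducedness of $W$ (so that the two component ideals intersect trivially); with that in place your case analysis by generic ranks is complete and the numerics agree with the paper's.
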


\begin{proof}
Let
$
 i _{ p }, i _{ q }
$
be the closed immersions into
$
 W
$
of its irreducible components on which the restriction of
$
 \cU
$
is isomorphic to
$
 \cO ( p )
$
and
$
 \cO ( q )
$, respectively. On $W$, there exists a short exact sequence
\begin{align}\label{eq:decomposition_I2_or_III_invertible}
 0
 \to
 i _{ q * } \cO ( q - 2 )
 \to
 \cU
 \to
 i _{ p * } \cO ( p )
 \to
 0.
\end{align}
This immediately implies that
$
 \cU
$
is unstable when
$
 q - p \ge 3
$
and is not stable when
$
 q - p = 2
$.

By the similar arguments as in the proof of \pref{pr:stability_of_cU_nonreduced_W}, the proof of the assertion amounts to showing
$
 \Hom _{ W } \lb i _{ p * } \cO ( p - 1 ), \cU \rb = 0
$. The rest of the proof will be devoted to this step.

Applying
$
 \Hom _{ W } \lb i _{ p * } \cO ( p - 1 ), - \rb
$
to the sequence \eqref{eq:decomposition_I2_or_III_invertible}, as part of the long exact sequence we obtain
\begin{align}
\begin{aligned}
 0
 \to
 \Hom _{ W } \lb i _{ p * } \cO ( p - 1 ), \cU \rb
 \to
 \Hom _{ W } \lb i _{ p * } \cO ( p - 1 ), i _{ p * } \cO ( p ) \rb\\
 \to
 \Ext ^{ 1 } _{ W } \lb i _{ p * } \cO ( p - 1 ), i _{ q * } \cO ( q - 2 ) \rb
 \to
 \Ext ^{ 1 } _{ W } \lb i _{ p * } \cO ( p - 1 ), \cU \rb.
\end{aligned}
\end{align}
Note first that
$
 \Hom _{ W } \lb i _{ p * } \cO ( p - 1 ), i _{ p * } \cO ( p ) \rb \simto \bfk ^{ 2 }
$.
To compute the next term, note that
\begin{align}
 \Ext ^{ 1 } _{ W } \lb i _{ p * } \cO ( p - 1 ), i _{ q * } \cO ( q - 2 ) \rb
 \simeq
 H ^{ 0 } \lb W, \cExt _{ W } ^{ 1 } \lb i _{ p * } \cO ( p - 1 ), i _{ q * } \cO ( q - 2 ) \rb \rb,
\end{align}
since
$
 \cExt _{ W } ^{ 1 } \lb i _{ p * } \cO ( p - 1 ), i _{ q * } \cO ( q - 2 ) \rb
$
is supported in the intersection of the two irreducible components and hence the local-to-global spectral sequence degenerates at sheet $2$.
A local computation ensures that the RHS is isomorphic to
$
 \bfk \times \bfk
$
or
$
 \bfk [ t ] / ( t ^{ 2 } )
$,
depending on whether $W$ is $I _{ 2 }$ or $III$. Hence in any case
$
 \dim _{ \bfk } \Ext ^{ 1 } _{ W } \lb i _{ p * } \cO ( p - 1 ), i _{ q * } \cO ( q - 2 ) \rb 
 =\dim _{\bfk}\bfk ^{ 2 } = 2
$.

Finally, by the similar reasoning one obtains
\begin{align}
 \Ext ^{ 1 } _{ W } \lb i _{ p * } \cO ( p - 1 ), \cU \rb
 \simeq
  H ^{ 0 } \lb W, \cExt _{ W } ^{ 1 } \lb i _{ p * } \cO ( p - 1 ), \cU \rb \rb.
\end{align}
However, from local computations we always obtain
$
 \cExt _{ W } ^{ 1 } \lb i _{ p * } \cO ( p - 1 ), \cU \rb = 0
$.
Thus we conclude the proof.
\end{proof}

\begin{proposition}
Let
$
 \cE
$
be a locally free sheaf bimodule over
$
 \X
$
whose scheme theoretic support
$
 W
$
is not irreducible (i.e., either
$
 I _{ 2 }
$
or
$
 III
$), and
$
 \cU
$
is a non-invertible sheaf of rank 1 of bidegree
$
 \lb p, q \rb
$
on
$
 W
$
with
$
 p \le q
$.
Then the stability of
$
 \cU
$
with respect to the polarization
$
 \cO _{ W } ( - K _{ \X \times \X } )
$
is classified as follows, depending on
$
 q - p
$.

\begin{align}
 \begin{array}{c|c|c}
 \Wtilde & q - p & \mbox{Stability of $\cU$}\\
 \hline
 \mbox{nodal conic} & \ge 2 & \mbox{unstable}\\
 & 1 & \mbox{semi-stable but not stable}\\
 & 0 & \mbox{stable}\\
 \hline
 \X \coprod \X & \ge 1 & \mbox{unstable}\\
 & 0 & \mbox{semi-stable but not stable} 
 \end{array}
\end{align}
\end{proposition}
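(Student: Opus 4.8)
The plan is to follow the template of the previous proposition, splitting according to the two possibilities recorded in \pref{lm:non_irreducible_W_non_invertible_U}: either $\Wtilde$ is a nodal conic with $\scUtilde = \cO(p,q)$, or $\Wtilde \simeq W^\nu \simeq \X \coprod \X$ with $\scUtilde = \cO(p) \coprod \cO(q)$. I would dispose of the decomposable ($\X\coprod\X$) case first. There $f$ separates the two branches of $W$ at every node, so $\cU = f_* \scUtilde \simeq i_{p*}\cO(p) \oplus i_{q*}\cO(q)$, a direct sum of rank $1$ sheaves on the two $(1,1)$-components. With respect to $H = \cO_W(-K_{\X\times\X})$, whose restriction to each component has degree $4$ and to $W$ has degree $8$, a rank $1$ sheaf $G$ on one component satisfies $p_G(m) - p_\cU(m) = \tfrac18(2\chi(G) - \chi(\cU))$, where $\chi(\cU) = p+q+2$. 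Taking $G = i_{q*}\cO(q)$ gives $\tfrac{q-p}{8}$, so $\cU$ is unstable for $q>p$ and, for $q=p$, is a direct sum of two stable sheaves of equal reduced Hilbert polynomial, hence strictly semistable. This yields the second block of the table.

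For the nodal conic case I would first restrict $\scUtilde$ to one ruling of $\Wtilde$ and push it forward along the finite map $f$ to produce the short exact sequence
\begin{align}\label{eq:ses_nodal_conic_noninv}
 0 \to i_{q*}\cO(q-1) \to \cU \to i_{p*}\cO(p) \to 0.
\end{align}
Because the two components of $\Wtilde$ meet in a single point, the twist here is $-1$ rather than the $-2$ appearing in \eqref{eq:decomposition_I2_or_III_invertible}. From \eqref{eq:ses_nodal_conic_noninv} one reads off $\chi(\cU) = p+q+1$, so $p_\cU(m) = m + \tfrac{p+q+1}{8}$, and the subsheaf $i_{q*}\cO(q-1)$, for which $2\chi(i_{q*}\cO(q-1)) - \chi(\cU) = q-p-1$, makes $\cU$ unstable when $q-p \ge 2$ and destroys stability when $q-p=1$. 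This accounts for the first block of the table.

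It remains to prove stability for $q=p$ and semistability for $q-p=1$. Following the subsheaf argument of \pref{pr:stability_of_cU_nonreduced_W}, I take $0 \ne F \subsetneq \cU$; purity of $\cU$ forces $\dim\Supp F = 1$, and if $\dim\Supp \cU/F = 0$ then $p_F < p_\cU$ trivially, so I may assume $\dim\Supp \cU/F = 1$ and distinguish three cases by the generic ranks of $F$ on the two components. If $F$ is supported on the $q$-component, then $F \subseteq i_{q*}\cO(q-1)$ and the computation above gives $p_F \le p_\cU$, strict when $q=p$. If $F$ has rank $1$ on both components, then $F \cap i_{q*}\cO(q-1) = i_{q*}\cO(d)$ with $d \le q-1$ and the image of $F$ in $i_{p*}\cO(p)$ is $i_{p*}\cO(e)$ with $e \le p$, whence $p_F - p_\cU = \tfrac18\lb (d-(q-1)) + (e-p)\rb \le 0$, with equality only if $d=q-1$ and $e=p$, which forces $F = \cU$ by the five lemma applied to the evident ladder between $0 \to i_{q*}\cO(d) \to F \to i_{p*}\cO(e) \to 0$ and \eqref{eq:ses_nodal_conic_noninv}. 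The delicate case is when $F = i_{p*}\cO(e)$ is supported on the $p$-component: the degree count shows this destabilizes $\cU$ in the range $q=p$ precisely when $e=p$, while $e \le p-1$ is harmless and the semistable range $q-p=1$ produces no destabilizer at all.

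The main obstacle is thus to exclude a copy of $i_{p*}\cO(p)$ inside $\cU$ in the stable case, i.e.\ to prove $\Hom_W(i_{p*}\cO(p), \cU) = 0$, equivalently that \eqref{eq:ses_nodal_conic_noninv} does not split. I would apply $\Hom_W(i_{p*}\cO(p), -)$ to it; since the two outer terms are supported on distinct components, a local computation at the node(s) gives $\Hom_W(i_{p*}\cO(p), i_{q*}\cO(q-1)) = 0$, so $\Hom_W(i_{p*}\cO(p), \cU)$ is the kernel of the connecting map $\bfk \simeq \Hom_W(i_{p*}\cO(p), i_{p*}\cO(p)) \to \Ext^1_W(i_{p*}\cO(p), i_{q*}\cO(q-1))$ carrying $\id$ to the extension class of \eqref{eq:ses_nodal_conic_noninv}. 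That class is nonzero because a splitting would identify $\cU$ with the decomposable sheaf $i_{q*}\cO(q-1) \oplus i_{p*}\cO(p)$, contradicting the local type of $\cU = f_*\scUtilde$ at the node(s) of $W$: it is invertible at the node left untouched by $f$ when $W = I_2$, and of conductor type $(x,y)$ rather than the decomposable type $(x^2,y)$ when $W = III$. I expect the genuinely computational point to be this local $\cExt$ analysis at the tacnode of $III$, which I would carry out exactly as in the proof of the previous proposition; the semistable case $q-p=1$ needs no such vanishing, since there the degree count already bounds every subsheaf.
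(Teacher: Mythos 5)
Your proof is correct and follows the route the paper itself intends: the published proof consists of two sentences, deferring the nodal-conic case to ``the same arguments as in the proof of the previous proposition'' and declaring the $\X \coprod \X$ case ``rather obvious''. Your treatment of the decomposable case, your short exact sequence $0 \to i_{q*}\cO(q-1) \to \cU \to i_{p*}\cO(p) \to 0$ with the twist $-1$ (reflecting the single intersection point of the components of $\Wtilde$), the resulting Euler characteristic and reduced-Hilbert-polynomial counts, and the case division on subsheaves all match that template; the reduction of stability for $q=p$ to the vanishing of $\Hom_W(i_{p*}\cO(p),\cU)$ is exactly the analogue of the reduction to $\Hom_W(i_{p*}\cO(p-1),\cU)=0$ in the invertible case. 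The one place you genuinely deviate is in establishing that vanishing: the paper's model argument computes $\Ext^1$ via local-to-global and local $\cExt$ calculations at the singular points, whereas you argue that the sequence cannot split because the split sheaf has the wrong local isomorphism type (non-invertible at both nodes of $I_2$, respectively of decomposable type $(x^2,y)$ rather than $(x,y)$ at the tacnode of $III$). This is a valid and arguably cleaner substitute --- note that it requires no $\cExt$ computation at all, contrary to your closing expectation --- since a nonzero map $i_{p*}\cO(p)\to\cU$ would necessarily split the sequence (its composite with the projection is a nonzero scalar, as $\Hom_W(i_{p*}\cO(p), i_{q*}\cO(q-1))=0$).
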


\begin{proof}
The assertion for the case
$
 \Wtilde \simeq \mbox{nodal conic}
$
(two copies of $\X$ glued transversally together at a point) can be proved by the same arguments as in the proof of the previous proposition. The assertion for the case
$
 \Wtilde \simeq \X \coprod \X
$
is rather obvious.
\end{proof}

\subsection{Deformation theory}
The deformation theory of the sheaf bimodule
$
 \cE
$
as a coherent sheaf on
$
 \X \times \X
$
is controlled by the differential graded Lie algebra
$
 \Hom ^{ \bullet } _{ \X \times \X } \lb \cE, \cE \rb
$.
As a corollary, the first order infinitesimal automorphisms and the first order infinitesimal deformations of $\cE$ are classified by
$
 \Ext ^{ 0 } _{ \X \times \X } \lb \cE, \cE \rb
$
and
$
 \Ext ^{ 1 } _{ \X \times \X } \lb \cE, \cE \rb
$,
respectively, and
$
 \Ext ^{ 2 } _{ \X \times \X } \lb \cE, \cE \rb
$
serves as an obstruction space.
Based on our stability analysis above, we can partially compute these spaces as follows.

\begin{proposition}\label{pr:ext_groups_of_sheaf_bimodules}
Let
$
 \cE
$
be a locally free sheaf bimodule on $\X$.
\begin{enumerate}
\item
It holds that
\begin{align}
 \chi \lb \cE, \cE \rb
 =
 \sum _{ i = 0 } ^{ 2 } ( - 1 ) ^{ i } \dim _{ \bfk } \Ext ^{ i } _{ \X \times \X } \lb \cE, \cE \rb
 =
 - 8.
\end{align}

\item
When
$
 \cE
$
is semi-stable with respect to the polarization
$
 \cO _{ \X \times \X } \lb - K _{ \X \times \X } \rb
$,
then
$
 \Ext ^{ 2 } _{ \X \times \X } \lb \cE, \cE \rb = 0
$.
In particular, the deformation of
$
 \cE
$
is unobstructed.

\item
When
$
 \cE
$
is stable with respect to the polarization
$
 \cO _{ \X \times \X } \lb - K _{ \X \times \X } \rb
$
or the sheaf
$
 \cU
$
on
$
 W
$
is invertible, then
\begin{align}
 \lb \dim _{ \bfk } \Ext ^{ i } _{ \X \times \X } \lb \cE, \cE \rb \rb _{ i = 0, 1, 2 }
 =
 \lb 1, 9, 0 \rb.
\end{align}
\end{enumerate}
\end{proposition}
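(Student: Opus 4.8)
The plan is to reduce all three assertions to Hirzebruch--Riemann--Roch on the surface $\X \times \X$, Serre duality, and the Gieseker (semi-)stability established above. For (1), observe that $\chi \lb \cE, \cE \rb$ depends only on the class of $\cE$ in the numerical Grothendieck group, hence only on its Chern character. Since $u _{ * } \cU$ and $v _{ * } \cU$ are locally free of rank $2$, every locally free sheaf bimodule of rank $2$ has $c _{ 1 } ( \cE ) = \cO _{ \X \times \X } ( 2, 2 )$, irrespective of its type and of $\deg \cU$ (for type \eqref{it:type_1_1} this is rank $2$ times a $(1,1)$-class, for type \eqref{it:type_2_2} it is rank $1$ times a $(2,2)$-class). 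A short Hirzebruch--Riemann--Roch computation then shows that, because $\cE$ has rank $0$, the Euler pairing collapses to $\chi ( \cE, \cE ) = - c _{ 1 } ( \cE ) ^{ 2 } = - \cO _{ \X \times \X } ( 2, 2 ) ^{ 2 } = - 8$, with the higher Chern data dropping out.

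For (2), Serre duality on $\X \times \X$ gives $\Ext ^{ 2 } _{ \X \times \X } ( \cE, \cE ) \simeq \Hom _{ \X \times \X } \lb \cE, \cE \otimes \omega _{ \X \times \X } \rb ^{ * }$ with $\omega _{ \X \times \X } = \cO _{ \X \times \X } ( - 2, - 2 )$. As shown in \pref{sc:sbc}, $\cE$ is pure of dimension $1$, and twisting by a line bundle preserves purity and semistability, so $\cE \otimes \omega _{ \X \times \X }$ is again semistable with the same multiplicity along $W$. Since $\omega _{ \X \times \X } = \cO ( - 1 \cdot ( - K _{ \X \times \X } ) )$ and the degree of the support is $\lb - K _{ \X \times \X } \rb \cdot [ W ] = 8 > 0$, its reduced Hilbert polynomial satisfies $p _{ \cE \otimes \omega _{ \X \times \X } } = p _{ \cE } - 1$. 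A nonzero map $\cE \to \cE \otimes \omega _{ \X \times \X }$ would have image a quotient of $\cE$, hence of reduced Hilbert polynomial $\ge p _{ \cE }$, and at the same time a nonzero purely $1$-dimensional subsheaf of the semistable $\cE \otimes \omega _{ \X \times \X }$, hence of reduced Hilbert polynomial $\le p _{ \cE } - 1$; this contradiction forces the $\Hom$ to vanish. Unobstructedness follows since $\Ext ^{ 2 } _{ \X \times \X } ( \cE, \cE )$ is the obstruction space.

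For (3) it suffices, given $\chi = - 8$ from (1), to prove $\dim \Hom ( \cE, \cE ) = 1$ and $\Ext ^{ 2 } = 0$, whence $\dim \Ext ^{ 1 } = 1 - \chi = 9$. If $\cE$ is stable it is simple, so $\Hom = \bfk$, and stability implies semistability, so $\Ext ^{ 2 } = 0$ by (2); this settles the stable case. The remaining case, where $\cU$ is invertible but $\cE$ need not be stable, is the main point. Here I would use that $W$ is a Cartier divisor (\pref{cr:Cartier}), so the derived self-intersection formula gives $L \iota ^{ * } \iota _{ * } \cU \simeq \cU \oplus \lb \cU \otimes \cN ^{ \vee } \rb [ 1 ]$ with $\cN = \cO _{ \X \times \X } ( 2, 2 ) | _{ W }$; combining this with the adjunction $( L \iota ^{ * }, \iota _{ * } )$ and the invertibility of $\cU$ yields
\begin{align}
 \Ext ^{ k } _{ \X \times \X } ( \cE, \cE ) \simeq H ^{ k } \lb W, \cO _{ W } \rb \oplus H ^{ k - 1 } \lb W, \cN \rb .
\end{align}
By adjunction $\omega _{ W } \simeq \cO _{ W }$, and the sequence $0 \to \cO _{ \X \times \X } ( - 2, - 2 ) \to \cO _{ \X \times \X } \to \cO _{ W } \to 0$ computes $h ^{ 0 } ( \cO _{ W } ) = h ^{ 1 } ( \cO _{ W } ) = 1$ for every $( 2, 2 )$-curve $W$. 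Since $\deg \cN = [ W ] ^{ 2 } = 8 > 0$ on the genus-$1$ Gorenstein curve $W$ with trivial dualizing sheaf, Serre duality on $W$ gives $H ^{ 1 } ( W, \cN ) \simeq H ^{ 0 } ( W, \cN ^{ \vee } ) ^{ * } = 0$. Feeding $k = 0, 1, 2$ into the display produces $( 1, 9, 0 )$.

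The step I expect to demand the most care is the invertible case of (3): the derived self-intersection formula and the adjunction must be justified for the singular and non-reduced curves $W$ occurring in \pref{sc:sbc}, which is exactly where the Gorenstein property guaranteed by $W$ being Cartier is essential; and the vanishing $H ^{ 1 } ( W, \cN ) = 0$ must be checked uniformly across the degenerate types, in particular component-by-component for the reducible curves $I _{ 2 }$ and $III$ (where $\cN ^{ \vee }$ has negative degree $- 4$ on each $( 1, 1 )$-component) and for the non-reduced $W$.
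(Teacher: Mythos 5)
Your proof is correct, and parts of it diverge from the paper's in ways worth recording. Part (2) is the paper's argument verbatim (Serre duality plus the comparison $p_{\cE} > p_{\cE \otimes \omega_{\X \times \X}}$ of reduced Hilbert polynomials; the paper just cites \cite[Proposition 1.2.7]{MR2665168} for the step you prove inline via the image of the map). For part (1) the paper instead deforms $\cE$ to the split bimodule $\cO_{\Delta}(a') \oplus \cO_{\Delta}(b')$ and invokes deformation invariance of $\chi(\cE,\cE)$; your direct Hirzebruch--Riemann--Roch computation, using only that $\cE$ is torsion with $c_1(\cE) = \cO_{\X \times \X}(2,2)$ in every case of the classification, is self-contained and avoids the (unproved) connectedness statement underlying the paper's reduction. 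The most substantive difference is in part (3): for invertible $\cU$ the paper computes only $\Hom_{W}(\cU,\cU) \simeq H^0(W,\cO_W) = \bfk$ and then says "combine with the first two items" --- but item (2) requires semistability, and by the paper's own stability analysis an invertible $\cU$ of bidegree $(p,q)$ with $q-p \ge 3$ on a reducible $W$ is unstable, so the vanishing of $\Ext^2$ in that case is not literally covered. Your route through $L\iota^*\iota_*\cU$ for the Cartier divisor $W$ (\pref{cr:Cartier}) yields $\Ext^0 = H^0(\cO_W) = \bfk$ and $\Ext^2 \hookrightarrow H^1(W,\cN) = H^0(W,\cN^\vee)^* = 0$ uniformly across all invertible cases, which closes exactly that gap. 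One small caveat: the direct-sum splitting $L\iota^*\iota_*\cU \simeq \cU \oplus (\cU \otimes \cN^\vee)[1]$ is more than you need and requires justification (it does hold for divisors, since $\cN = \cO_{\X\times\X}(W)|_W$ extends to the first infinitesimal neighbourhood); the long exact sequence of the triangle $(\cU\otimes\cN^\vee)[1] \to L\iota^*\iota_*\cU \to \cU$ already gives $\Ext^0$ and $\Ext^2$, with the connecting maps vanishing for degree reasons, so you could state only the triangle and avoid the issue. The component-by-component and non-reduced checks of $H^0(W,\cN^\vee)=0$ that you flag do go through ($\cN^\vee$ has degree $-4$ on each $(1,1)$-component, and for $W = 2\Wred$ both graded pieces $\cI \otimes \cN^\vee$ and $\cN^\vee|_{\Wred}$ have negative degree on $\Wred \simeq \X$).
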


\begin{proof}
\begin{enumerate}
\item
Since
$
 \dim \X \times \X = 2
$,
the extension groups between coherent sheaves are concentrated in degrees
$
 0, 1, 2
$.

It follows from the classification that any locally free sheaf bimodule on
$
 \X
$
is deformation equivalent to the standard one
$
 \cO _{ \Delta } ( a ' ) \oplus \cO _{ \Delta } ( b ' )
$
for some
$
 \lb a ' , b ' \rb
$.
Using the deformation invariance of
$
 \chi \lb \cE, \cE \rb
$
one can reduce the computation to that for the standard one, to obtain the result.

\item
By the Serre duality,
$
 \dim _{ \bfk } \Ext ^{ 2 } _{ \X \times \X } \lb \cE, \cE \rb
 =
 \dim _{ \bfk } \Ext ^{ 0 } _{ \X \times \X } \lb \cE, \cE \lb K _{ \X \times \X } \rb \rb
$.
Since
$
 p _{ \cE \lb K _{ \X \times \X } \rb } ( t )
 =
 p _{ \cE } ( t - 1 )
$, it always follows that
$
 p _{ \cE } > p _{ \cE \lb K _{ \X \times \X } \rb }
$.
Hence if
$
 \cE
$
is semi-stable, there is no non-trivial homomorphism from
$
 \cE
$
to
$
 \cE \lb K _{ \X \times \X } \rb
$
by \cite[Proposition 1.2.7]{MR2665168}.

\item
When
$
 \cE
$
is stable, then
$
 \Ext ^{ 0 } _{ \X \times \X } \lb \cE, \cE \rb = \bfk \id _{ \cE }
$
by
\cite[Corollary 1.2.8]{MR2665168}.
When
$
 \cU
$
is invertible we have
$
 \Hom _{ W } \lb \cU, \cU \rb
 \simeq
 H ^{ 0 } \lb W, \cO _{ W } \rb
$
and the latter is isomorphic to
$
 \bfk
$,
since
$
 W
$
is an effective Cartier divisor of $\X \times \X$.
Combining these with the first two items, we obtain the conclusion.
\end{enumerate}
\end{proof}

\subsection{Moduli stack}
 \label{sc:Msh}

Let
$
 \Msht'
$
be the stack of locally free sheaf bimodules on
$
 \X
$,
which is an open substack of the stack
of coherent sheaves on
$
 \X \times \X
$.
The automorphism group of every object of $\Msht'$
contains the multiplicative group $\bGm$,
and we write the rigidification as
\begin{align}
 \Msht \coloneqq \left. \Msht' \right/ \! \mathrm{B} \bGm,
\end{align}
which is written as $\lb \Msht' \rb^{\bGm}$ and
$\Msht' \!\! \fatslash \, \bGm$
in \cite{MR2007376} and \cite{MR2125542} respectively.

The stack $\Msht$ is decomposed into open and closed substacks by $\deg \cU$.
Note that the map $\cU \mapsto \cU \otimes v^* \cO_{\X}(k)$ induces
an isomorphism between components
whose degrees differ by an even integer, preserving the equivalence classes of the associated categories
$
 \Qgr
$.
We write the connected component parametrizing sheaf bimodules of degree 2 (resp. 3) as
$\Mshtz$ (resp. $\Mshto$).  

The discussion in the previous subsections shows that
$
 \cE
$
is simple and has $9$-dimensional unobstructed deformation space
if $W$ is non-singular.
Hence the dimension of the open substack of
$
 \Msht
$
consisting of $\cE$ with non-singular $W$ has dimension $9$.
Roughly speaking, 8 out of 9 comes from the linear system
\begin{align}
 |W| = | - K _{ \X \times \X } |
 =
 \bP _{ \bullet } H^0( \cO_{\X}(2) \boxtimes \cO_{\X}(2) )
 \simeq \bP^8,
\end{align}
and the remaining 1 comes from
$
 \Pic ^{ 0 } ( W )
$.
The group
$
 \Aut(\X) \times \Aut(\X) \simeq (\PGL_2)^2
$
acts naturally on the stack $\Msht$
by pull-back of coherent sheaves.
The dimension of the quotient stack
$\Msh \coloneqq [\Msht / (\PGL_2)^2]$
is the same as the expected dimension $3$ of the moduli space
of noncommutative Hirzebruch surfaces. We let
$
 \Mshz
$
and
$
 \Msho
$
denote the quotients of
$
 \Mshtz
$
and
$
 \Mshto
$
respectively.

\section{Quivers with relations from noncommutative Hirzebruch surfaces}
\label{sc:fda}

\subsection{Sheaf bimodule as the kernel of the dual gluing functor}
 \label{sc:SOD}

Let $\cD$ be a triangulated category.
A full triangulated subcategory $\cN \subset \cD$ is
\emph{right}
(resp. \emph{left})
\emph{admissible}
if the inclusion functor
$i \colon \cN \hookrightarrow \cD$
has a right (resp. left) adjoint functor
$i^!$ (resp. $i^*$).
It is \emph{admissible}
if it is both right and left admissible.
The
\emph{right}
(resp. \emph{left})
\emph{orthogonal}
$\cN^\bot$ (resp. $\lbot{\cN}$)
is the full subcategory of $\cD$
consisting of objects $X \in \cD$
satisfying $\Hom(N, X) = 0$
(resp. $\Hom(X, N) = 0$)
for any $N \in \cN$.
The subcategory $\cN$ is right (resp. left) admissible
if and only if for any $X \in \cD$,
there exists $N \in \cN$ and $M \in \cN^\bot$
(resp. $N' \in \cN$ and $M' \in \lbot{\cN}$)
forming a distinguished triangle
$
 N \to X \to M \to N[1]
$
(resp.
$
 M' \to X \to N' \to M'[1]
$).
Let $(\cN_1, \ldots, \cN_n)$
be a sequence of subcategories of $\cD$.
For $i = 1, \ldots, n$,
we let $\cD_i$ denote the smallest full triangulated subcategory of $\cD$
containing objects of $\cN_1, \ldots, \cN_i$.
The sequence $(\cN_1, \ldots, \cN_n)$ is
a \emph{semiorthogonal decomposition} of $\cD$
if 
\begin{itemize}
 \item
$\cD_{i-1}$ is right admissible in $\cD_i$ for $i=1, \ldots, n-1$,
 \item
the left orthogonal of $\cD_{i-1}$ in $\cD_i$ is equivalent to $\cN_i$,
and
 \item
$\cD_n$ is equivalent to $\cD$.
\end{itemize}
We write
$
 \cD = \la \cN_1, \ldots, \cN_n \ra
$
if $(\cN_1, \ldots, \cN_n)$ is a semiorthogonal decomposition of $\cD$.

Let
$\cD = \la \cN_1, \cN_2 \ra$
be a semiorthogonal decomposition,
so that the inclusion functor
$i_1 \colon \cN_1 \to \cD$ has a right adjoint $i_1^!$
and the inclusion functor
$i_2 \colon \cN_2 \to \cD$ has a left adjoint $i_2^*$.
The \emph{gluing bimodule} is given by
\begin{align}
 \hom_\cD(i_1 (-), i_2 (-))
  \simeq \hom_{\cN_2}(\phi(-), -)
  \simeq \hom_{\cN_1}(-, \phi^!(-))
  \colon \cD \to D^b(\bfk),
\end{align}
where
\begin{align}
 \phi = i_2^* \circ i_1 \colon \cN_1 \to \cN_2
\end{align}
is the \emph{gluing functor}
and
\begin{align}
 \phi^! = i_1^! \circ i_2 \colon \cN_2 \to \cN_1
\end{align}
is the \emph{dual gluing functor}.
The category $\cD$ can be recovered
from the categories $\cN_1$, $\cN_2$
and the gluing bimodule
by the upper-triangular matrix construction
\cite{1402.7364}.

The following is a generalization of \cite{MR1208153}.

\begin{theorem} \label{th:SOD}
Let $X$ be a smooth projective scheme and
$\cE$ be a locally free sheaf bimodule of rank 2 on $X$.
Then one has a semiorthogonal decomposition
\begin{align} \label{eq:SOD}
 D^b \qgr \bS(\cE) =
  \la f_1^* D^b \coh X, f_0^* D^b \coh X \ra,
\end{align}
and the dual gluing functor is given by
\begin{align}
 f_0^* D^b \coh X
  \xto{(f_0^*)^{-1}} D^b \coh X
  \xto{-\otimes_{\cO_X} \cE} D^b \coh X
  \xto {f_1^*} f_1^* D^b \coh X.
\end{align}
\end{theorem}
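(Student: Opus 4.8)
The plan is to verify the three defining features of a semiorthogonal decomposition — semiorthogonality, admissibility of the two pieces, and generation — and then to read off the dual gluing functor directly from the pushforward formula \eqref{eq:Mor07.4.4}. Write $\cA = \bS(\cE)$, set $\cN_1 = f_1^* D^b \coh X$ and $\cN_2 = f_0^* D^b \coh X$, and let $\cT \subseteq D^b \qgr \cA$ be the smallest full triangulated subcategory containing $\cN_1$ and $\cN_2$.

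First I would establish semiorthogonality. Combining the adjunction $\RHom(f_0^* \cF, \cM) \simeq \RHom(\cF, \bR f_{0*} \cM)$ with the vanishing ($n = m+1$) case of \eqref{eq:Mor07.4.4} gives
\begin{align}
 \RHom(f_0^* \cF, f_1^* \cG) \simeq \RHom(\cF, \bR f_{0*}(f_1^* \cG)) = 0
\end{align}
for all $\cF, \cG \in D^b \coh X$, so $\cN_2 \subseteq \lbot{\cN_1}$. For admissibility, $f_0^*$ and $f_1^*$ are fully faithful with right adjoints $\bR f_{0*}$ and $\bR f_{1*}$, so $\cN_1$ and $\cN_2$ are right admissible; since $D^b \coh X$ and $D^b \qgr \cA$ both carry Serre functors, each of these fully faithful functors also admits a left adjoint (conjugate the right adjoint by the Serre functors), so $\cN_1$ and $\cN_2$ are left admissible as well.

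The heart of the proof is generation, i.e. $\cT = D^b \qgr \cA$, and this is where I expect the real work. The tool I would use is the fundamental exact sequence of right $\cA$-modules
\begin{align} \label{eq:fundamental_sketch}
 0 \to \cQ_n \otimes_{\cO_X} e_{n+2} \cA \to \cE^{*n} \otimes_{\cO_X} e_{n+1} \cA \to e_n \cA \to \cO_{\Delta_X}^{(n)} \to 0,
\end{align}
where the maps come from the multiplication of $\cA$ and the inclusion $\cQ_n \hookrightarrow \cE^{*n} \otimes_{\cO_X} \cE^{*(n+1)}$, and $\cO_{\Delta_X}^{(n)}$ is $\cO_{\Delta_X}$ placed in internal degree $n$. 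Surjectivity onto $e_n \cA$ modulo $\cO_{\Delta_X}^{(n)}$ holds since $\cA$ is generated in degree one; exactness in the middle follows from the decomposition $\cR_{n,j} = \cQ_n \otimes_{\cO_X} \cE^{*(n+2)} \otimes_{\cO_X} \cdots \otimes_{\cO_X} \cE^{*(j-1)} + \cE^{*n} \otimes_{\cO_X} \cR_{n+1,j}$; and injectivity of the left-hand map is forced by a rank count, since in internal degree $j$ both $\cQ_n \otimes_{\cO_X} e_{n+2} \cA$ and the kernel of the surjection onto $e_n \cA$ are locally free of rank $j - n - 1$. All terms of \eqref{eq:fundamental_sketch} are $\cO_X$-locally free, so for $\cF \in \coh X$ the functor $\cF \otimes_{\cO_X} (-)$ keeps it exact, and applying the exact functor $\pi$, which annihilates the torsion module $\cO_{\Delta_X}^{(n)}$, yields the short exact sequence
\begin{align}
 0 \to f_{n+2}^*(\cF \otimes_{\cO_X} \cQ_n) \to f_{n+1}^*(\cF \otimes_{\cO_X} \cE^{*n}) \to f_n^* \cF \to 0
\end{align}
in $\qgr \cA$, hence an exact triangle relating the indices $n, n+1, n+2$. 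Because $\cQ_n$ is invertible, $(-) \otimes_{\cO_X} \cQ_n$ is an autoequivalence of $\coh X$; so starting from the base case $\cN_1, \cN_2 \subseteq \cT$, a downward induction places every $f_n^* \coh X$ with $n < 0$ in $\cT$, and an upward induction (solving $\cF \otimes_{\cO_X} \cQ_n = \cG$ for $\cF$ by invertibility of $\cQ_n$) places every $f_n^* \coh X$ with $n \ge 2$ in $\cT$. Since $D^b \qgr \cA$ is generated as a triangulated category by the images $f_n^* \coh X$, $n \in \bZ$ (cf. \cite[Proposition 2.19]{MR2115370}), this gives $\cT = D^b \qgr \cA$. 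Semiorthogonality, admissibility, and generation together yield \eqref{eq:SOD}.

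Finally, the dual gluing functor. Identifying $\cN_1$ and $\cN_2$ with $D^b \coh X$ via $f_1^*$ and $f_0^*$, the inclusion $i_2$ is $f_0^*$ and the right adjoint $i_1^!$ of $i_1 = f_1^*$ is $\bR f_{1*}$, whence $\phi^! = i_1^! \circ i_2 = \bR f_{1*} \circ f_0^*$. The $n \le m$ case of \eqref{eq:Mor07.4.4} with $n = 0$ and $m = 1$ gives $\bR f_{1*}(f_0^* \cG) \simeq \cG \otimes_{\cO_X} \cA_{0,1} = \cG \otimes_{\cO_X} \cE$, which is exactly the asserted composite of $(f_0^*)^{-1}$, $(-) \otimes_{\cO_X} \cE$, and $f_1^*$. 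The main obstacle throughout is the generation step, and within it the construction and justification of the fundamental sequence \eqref{eq:fundamental_sketch} — above all the injectivity of its left-hand map; the remaining assertions are formal consequences of the adjunctions recalled in \pref{sc:ncP1-bundle} and of \eqref{eq:Mor07.4.4}.
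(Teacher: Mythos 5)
Your proposal is correct and follows essentially the same route as the paper: semiorthogonality and the dual gluing functor from \eqref{eq:Mor07.4.4}, admissibility from the adjunctions plus the Serre functor, and generation via the short exact sequence $0 \to f_{m+2}^*(\cF \otimes_{\cO_X} \cQ_m) \to f_{m+1}^*(\cF \otimes_{\cO_X} \cE^{*m}) \to f_m^* \cF \to 0$ together with invertibility of $\cQ_m$ and \cite[Proposition 2.19]{MR2115370}. The only difference is that you sketch a proof of the fundamental exact sequence, which the paper simply quotes from \cite[Theorem 1.4]{MR2115370}.
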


\begin{proof}
The subcategory $f_i^* D^b \coh X$ is right admissible
since $f_i^*$ has a right adjoint functor $\bR f_{i*}$.
The right admissibility also implies the left admissibility
since $D^b \qgr \bS(\cE)$ has a Serre functor. 
It follows from \eqref{eq:Mor07.4.4} that
the full subcategory $f_1^* D^b \coh X$
is contained in the right orthogonal of
$f_0^* D^b \coh X$.
For any $m \in \bZ$ and any locally-free $\cO_X$-module $\cF$,
one has an exact sequence
\begin{align}
 0
  \to \cF \otimes_{\cO_X} \cQ_m \otimes_{\cO_X} e_{m+2} \cA
  \to \cF \otimes_{\cO_X} \cE^{*m} \otimes_{\cO_X} e_{m+1} \cA
  \to \cF \otimes_{\cO_X} e_m \cA
  \to \cF
  \to 0
\end{align}
in $\Gr \bS(\cE)$
(see \cite[Theorem 1.4]{MR2115370}),
which induces an exact sequence
\begin{align} \label{eq:key1}
 0
  \to f_{m+2}^* (\cF \otimes_{\cO_X} \cQ_m)
  \to f_{m+1}^*(\cF \otimes_{\cO_X} \cE^{*m})
  \to f_m^* \cF
  \to 0
\end{align}
in $\qgr \cS(\cE)$.
It follows from \eqref{eq:key1} that
if a full triangulated subcategory $\cT$ of $D^b \qgr \bS(\cE)$ contains
$f_{m+2}^* D^b \coh X$ and $f_{m+1}^* D^b \coh X$ for some $m \in \bZ$,
then it also contains $f_m^* D^b \coh X$.
Similarly,
if a full triangulated subcategory $\cT$ of $D^b \qgr \bS(\cE)$ contains
$f_{m+1}^* D^b \coh X$ and $f_m^* D^b \coh X$,
then it also contains $f_{m+2}^* D^b \coh X$,
since the sheaf bimodule $\cQ_m$ is invertible.
It follows that
if a full triangulated subcategory $\cT$ of $D^b \qgr \bS(\cE)$ contains
$f_{m+2}^* D^b \coh X$ and $f_{m+1}^* D^b \coh X$ for some $m \in \bZ$,
then it also contains $f_n^* D^b \coh X$ for all $n \in \bZ$.
It follows from \cite[Proposition 2.19]{MR2115370}
that the subcategory of $D^b \qgr \bS(\cE)$
right orthogonal to $f_n^* D^b \coh X$ for all $n \in \bZ$ is zero,
and \eqref{eq:SOD} is proved.
The dual gluing functor is given on locally-free $\cO_X$-modules by
\begin{align}
 \bR {f_1}_* f_0^* \simeq (-) \otimes_{\cO_X} \cA_{01}
  \simeq (-) \otimes_{\cO_X} \cE
\end{align}
by \eqref{eq:Mor07.4.4}.
\end{proof}

\subsection{Full strong exceptional collections on noncommutative Hirzebruch surfaces}
\label{sc:fsec}
Let $\cE$ be a locally free sheaf bimodule of rank 2 on $\X$.
By \pref{th:SOD},
the derived category $D^b \qgr \bS(\cE)$
is obtained by gluing two copies of $D^b \coh \X$
by the dual gluing functor
$
 \phi^! \coloneqq (-) \otimes_{\cO_\X} \cE
  \colon D^b \coh \X \to D^b \coh \X.
$
If one sets
\begin{align} \label{eq:EC}
 (E_1, E_2, E_3, E_4) \coloneqq
  (f_1^* \cO_\X ( - m - 1 ), f_1^* \cO_\X( - m ),
   f_0^* \cO_\X ( - 1 ), f_0^* \cO_\X),
\end{align}
then this is a full exceptional collection for any
$
 m \in \bZ
$.

In this section, we specify the pairs
$
 \lb \cE, m \rb
$
for which
\eqref{eq:EC} is a strong exceptional collection.

If we set
\begin{align}\label{eq:E_and_F}
 (F_1, F_2, F_3, F_4) \coloneqq
 ( \cO_\X ( - m - 1 ),
 \cO_\X( - m ),
 \phi^! \cO_\X ( - 1 ),
 \phi^! \cO_\X ),
\end{align}
then one has
\begin{align}
 \RHom_\cD(E_i, E_j)
  &\simeq
\begin{cases}
 \RHom_\X(F_i, F_j) & 3 \neq i < j, \\
 \bfk ^{ 2 } [ 0 ] & ( i, j ) = ( 3, 4 ), \\
 \bfk [ 0 ] & i = j, \\
 0 & \text{otherwise}.
\end{cases}
\end{align}
Hence we obtain the following explicit characterization of strongness,
which immediately follows from
$
 \phi^! \cO_\X ( - 1 ) = \cO _{ \X } ( a ' ) \oplus \cO _{ \X } ( b ' )
$,
$
 \phi^! \cO_\X = \cO _{ \X } ( a ) \oplus \cO _{ \X } ( b ) )
$,
and the inequality
$
 a \ge a '
$,
which is a consequence of the explicit computations given in \pref{sc:sbc}.

\begin{theorem}\label{th:iff_condition_for_strongness}
\eqref{eq:EC} is strong if and only if
\begin{align}
 \Ext ^{ 1 } _{ \X } ( F_i, F_j ) = 0
\end{align}
for
$
 \lb i, j \rb =
 ( 1, 3 ),
 ( 1, 4 ),
 ( 2, 3 ),
 ( 2, 4 ),
$
which is the case if and only if
\begin{align}
 a ' \ge - m - 1.
\end{align}
\end{theorem}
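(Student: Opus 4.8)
The plan is to establish the two equivalences in turn: first that strongness of \eqref{eq:EC} is detected precisely by the four groups $\Ext^1_\X(F_i,F_j)$ with $(i,j)\in\{(1,3),(1,4),(2,3),(2,4)\}$, and then that their simultaneous vanishing is equivalent to the single inequality $a'\geq -m-1$.

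For the first equivalence I would read everything off the displayed formula for $\RHom_\cD(E_i,E_j)$. By definition \eqref{eq:EC} is strong exactly when $\RHom_\cD(E_i,E_j)$ is concentrated in cohomological degree $0$ for all $i<j$. The diagonal entries and the entry $(i,j)=(3,4)$ are already the complexes $\bfk[0]$ and $\bfk^2[0]$, hence cause no obstruction, while the entries with $i>j$ vanish. The remaining entries, those with $3\neq i<j$, satisfy $\RHom_\cD(E_i,E_j)\simeq\RHom_\X(F_i,F_j)$; since $\X=\bP^1$ has cohomological dimension $1$ the groups $\Ext^{\geq 2}_\X$ vanish identically, so concentration in degree $0$ amounts to $\Ext^1_\X(F_i,F_j)=0$. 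Finally the pair $(1,2)$ is harmless, because $\Ext^1_\X(\cO_\X(-m-1),\cO_\X(-m))\simeq H^1(\X,\cO_\X(1))=0$, leaving exactly the four listed pairs.

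For the second equivalence I would substitute the Birkhoff--Grothendieck splittings $\phi^!\cO_\X(-1)\simeq\cO_\X(a')\oplus\cO_\X(b')$ and $\phi^!\cO_\X\simeq\cO_\X(a)\oplus\cO_\X(b)$ and use $\Ext^1_\X(\cO_\X(k),\cO_\X(l))\simeq H^1(\X,\cO_\X(l-k))$, which vanishes if and only if $l-k\geq -1$. This turns the four vanishings into $a'\geq -m-2$, $a\geq -m-2$, $a'\geq -m-1$ and $a\geq -m-1$ respectively, so that the only binding constraints are $a'\geq -m-1$ (from the pair $(2,3)$) and $a\geq -m-1$ (from $(2,4)$).

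The step carrying the genuine content is the collapse of these two constraints to $a'\geq -m-1$ alone, which rests on the inequality $a\geq a'$. I would justify $a\geq a'$ either by appealing to the explicit tables of \pref{sc:sbc}, in particular \pref{cr:a'_b'}, or more conceptually as follows: applying the functor $\phi^!=(-)\otimes_{\cO_\X}\cE$ to $0\to\cO_\X(-1)\to\cO_\X\to\cO_p\to 0$, where $\cO_p$ is the skyscraper at a point $p\in\X$, yields a short exact sequence $0\to\phi^!\cO_\X(-1)\to\phi^!\cO_\X\to\phi^!\cO_p\to 0$ with torsion cokernel of length $2$; the existence of such an injection of rank-$2$ bundles forces $a\geq a'$, since otherwise, i.e. if $a<a'$, both maps from $\phi^!\cO_\X(-1)$ into the smaller summand $\cO_\X(a)$ of $\phi^!\cO_\X$ would vanish (both $a-a'$ and $a-b'$ being negative, as $a<a'\leq b'$), forcing the injection to factor through the rank-one sheaf $\cO_\X(b)$, which is impossible. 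The one point to verify here is that $\phi^!$ is exact, so that the sequence above is genuinely short exact, and this is precisely the exactness statement recorded just after \eqref{eq:otimesE} for locally free $\cE$. Granting $a\geq a'$, the inequality $a'\geq -m-1$ at once implies all four numerical conditions, while conversely the pair $(2,3)$ already forces $a'\geq -m-1$, which completes the argument.
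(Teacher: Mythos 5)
Your proof is correct and follows essentially the same route as the paper, which states the theorem as an immediate consequence of the displayed computation of $\RHom_\cD(E_i,E_j)$, the splittings $\phi^!\cO_\X(-1)\simeq\cO_\X(a')\oplus\cO_\X(b')$ and $\phi^!\cO_\X\simeq\cO_\X(a)\oplus\cO_\X(b)$, and the inequality $a\ge a'$ drawn from the explicit classification in \pref{sc:sbc}; you correctly identify $(2,3)$ and $(2,4)$ as the binding pairs and reduce them to $a'\ge -m-1$ via $a\ge a'$. Your optional self-contained argument for $a\ge a'$ --- applying the exact functor $\phi^!$ to $0\to\cO_\X(-1)\to\cO_\X\to\cO_p\to 0$ and noting that a rank-two bundle cannot inject into a line bundle --- is a clean alternative to the case-by-case tables the paper relies on.
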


\begin{corollary}\label{cr:strong_for_m=1}
Let
$
 \cE
$
be a sheaf bimodule on
$
 \X
$
such that
$
 \chi \lb \cU \rb = 2
$.
Then the exceptional collection \eqref{eq:EC} for
$
 m = 1
$
is strong if and only if
\begin{itemize}
\item
$
 W
$
is non-reduced and
$
 \deg D =\mbox {$0, 2$ or $4$} 
$,

\item
$
 W
$
is integral, or

\item
$
 W
$
is not irreducible and
$
 p \ge - 1
$.
\end{itemize}

Similarly, when
$
 \chi \lb \cU \rb = 1
$,
the exceptional collection \eqref{eq:EC} for
$
 m = 1
$
is strong if and only if
\begin{itemize}
\item
$
 W
$
is non-reduced and
$
 \deg D = \mbox{$1$ or $3$}
$,

\item
$
 W
$
is integral or

\item
$
 W
$
is not irreducible and
$
 p \ge - 1
$.
\end{itemize}
\end{corollary}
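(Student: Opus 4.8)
The plan is to apply \pref{th:iff_condition_for_strongness} with $m = 1$, which reduces the strongness of \eqref{eq:EC} to the single numerical inequality $a' \ge -2$, and then to read off the value of $a'$ in each case from the classification in \pref{cr:a'_b'}. The three alternatives in the statement correspond exactly to the trichotomy of $W$ established in \pref{sc:sbc}: every locally free sheaf bimodule of type \eqref{it:type_2_2} has $W$ either non-reduced, integral (type $I_0$, $I_1$ or $II$), or reduced but reducible (type $I_2$ or $III$, i.e.\ ``not irreducible''), and these three cases are mutually exclusive and exhaustive. Since \eqref{eq:consequence_from_the_euler_number} gives $\deg \cU = \chi(\cU)$, fixing $\chi(\cU) \in \{1, 2\}$ pins down $\deg \cU$ and, through the integrality of the formulas in \pref{lm:non_reduced_W}, \pref{lm:irreducible_W_invertible_U} and \pref{lm:non_irreducible_W_invertible_U}, the parities of the remaining discrete invariants; the proof is then a finite case check.

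First I would treat the non-reduced and integral cases. For non-reduced $W$, an invertible $\cU$ forces $\deg \cU$ to be even, and more generally the formulas of \pref{lm:non_reduced_W} are integral precisely when $\chi(\cU) \equiv \deg D \pmod 2$; thus $\chi(\cU) = 2$ allows only even $\deg D$ and $\chi(\cU) = 1$ only odd $\deg D$. Substituting these values into \pref{cr:a'_b'}(1) gives $a' = -2$ or $-1$ when $\deg D = 0$ and $a' = (\chi(\cU) - \deg D)/2 - 1$ when $\deg D \ge 2$, so that $a' \ge -2$ holds exactly for $\deg D \in \{0,2,4\}$ in the even case and for $\deg D \in \{1,3\}$ in the odd case, matching the first bullet of each list. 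For integral $W$ I would feed $\chi(\cU) \in \{1, 2\}$ into \pref{cr:a'_b'}(2) and (3) and observe that $a' \in \{-2, -1\}$ in every instance, so the collection is always strong; this gives the second bullet.

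For non-irreducible $W$ I would handle the invertible and non-invertible cases together. Using the relation between $\chi(\cU)$ and the bidegree $(p,q)$ on $W$ recorded in \eqref{eq:q_is_a+b+2} (shifted by $1$ or $2$ according to whether $\cU$ is non-invertible with $\Wtilde$ a nodal conic or $\X \coprod \X$), fixing $\chi(\cU)$ determines $p+q$ and hence forces $q-p$ to have a fixed parity; feeding this into the four-line formula of \pref{cr:a'_b'}(4) and the two-line formulas of its final item, the inequality $a' \ge -2$ collapses in every sub-case to the uniform condition $p \ge -1$, giving the third bullet. The only genuine obstacle is this last piece of bookkeeping: the formula for $a'$ is piecewise in $q-p$, and one must check that as $p$ decreases (with $q$ increasing to keep $\chi(\cU)$ fixed) the jump between the regimes $q-p = 1$ and $q-p \ge 2$ conspires with the parity constraint to keep the threshold at precisely $p = -1$ (e.g.\ for $\chi(\cU)=1$ the boundary value $a' = -2$ is attained both at $(p,q) = (0,1)$ via $a' = p-2$ and at $(p,q) = (-1,2)$ via $a' = p-1$). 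No geometric input beyond \pref{th:iff_condition_for_strongness} and \pref{cr:a'_b'} is required.
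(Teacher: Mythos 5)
Your proposal is correct and follows exactly the route the paper takes: the paper's proof is the one-line remark that the corollary ``immediately follows from Corollary~\ref{cr:a'_b'},'' i.e.\ one applies Theorem~\ref{th:iff_condition_for_strongness} with $m=1$ to reduce strongness to $a' \ge -2$ and then reads off $a'$ case by case from Corollary~\ref{cr:a'_b'}, which is precisely your finite case check (and your bookkeeping of the parity constraints and of the shifts $\chi(\cU) = p+q$, $p+q+1$, $p+q+2$ in the non-irreducible sub-cases is accurate).
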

\begin{proof}
Immediately follows from \pref{cr:a'_b'}.
\end{proof}

\subsection{Quivers with relations and the moduli stack of relations}
 \label{sc:quiver}

If we set
\begin{align}
 (G_1, G_2, G_3, G_4) \coloneqq
  (v^* \cO_\X ( - m - 1 ), v^* \cO_\X( - m ), u^* \cO_\X ( - 1 ) \otimes \cU, \cU),
\end{align}
then
\begin{align}
\begin{aligned}
 \RHom_\X(F_2, F_3)
  &\simeq \RHom_\X(\cO_\X(- m), \phi^! \cO_\X ( - 1 )) \\
  &\simeq \RHom_\X(\cO_\X(- m), \cO_\X ( - 1 ) \otimes_{ \cO_\X } \cE) \\
  &\simeq \RHom_\X(\cO_\X(- m), v_* \lb u^* \cO_\X ( - 1 ) \otimes \cU \rb) \\
  &\simeq \RHom_W(v^* \cO_\X(- m), u^* \cO_\X ( - 1 ) \otimes \cU) \\
  &\simeq \RHom_W(G_2, G_3),
\end{aligned}
\end{align}
and similarly
\begin{align}
 \RHom_\X(F_i, F_j) \simeq \RHom_W(G_i, G_j)
\end{align}
for all
$
 1 \le i < j \le 4
$
except
$
 ( i, j ) = ( 3, 4 )
$.

Consider the case when
$
 \cU
$
is invertible,
$
 \deg \cU = 2
$,
the collection \eqref{eq:EC} for
$
 m = 1
$
is strong, and
$
 b - a = 0 = b ' - a '
$. Set
\begin{align}\label{eq:definition_of_L0_L1_L2}
 (L_0, L_1, L_2) \coloneqq
  (v^* \cO_\X(1), v ^{ * } \cO _{ \X } ( 1 ) \otimes u ^{ * } \cO _{ \X } ( - 1 ) \otimes \cU, u^* \cO_\X(1) ),
\end{align}
so that
\begin{align}
 \Hom(E_1, E_2) &\simeq H^0(L_0), &
 \Hom(E_2, E_3) &\simeq H^0(L_1), &
 \Hom(E_3, E_4) &\simeq H^0(L_2).
\end{align}
Consider the quiver
$
 Q ^{ 0 }
$
as in \pref{fg:quiver_of_nc_quadrics}.
\begin{figure}
\begin{align*}
\begin{psmatrix}[rowsep=35mm,colsep=35mm,mnode=circle]
 1 & 2 & 3 & 4
\psset{nodesep=3pt,arrows=->}
\ncline[offset=6pt]{1,1}{1,2}
 \lput*{N}(0.5){a_1}
\ncline[offset=-6pt]{1,1}{1,2}
 \lput*{N}(0.5){b_1}
\ncline[offset=6pt]{1,2}{1,3}
 \lput*{N}(0.5){a_2}
\ncline[offset=-6pt]{1,2}{1,3}
 \lput*{N}(0.5){b_2}
\ncline[offset=6pt]{1,3}{1,4}
 \lput*{N}(0.5){a_3}
\ncline[offset=-6pt]{1,3}{1,4}
 \lput*{N}(0.5){b_3}
\end{psmatrix}
\end{align*}
\caption{The quiver $Q^0$}
\label{fg:quiver_of_nc_quadrics}
\end{figure}
For
$
 i = 0, 1, 2
$,
fix an isomorphism of vector spaces
\begin{align}
 \vspan \lc a _{ i }, b _{ i } \rc \simeq H ^{ 0 } \lb L _{ i } \rb.
\end{align}
Under these isomorphisms, the following subspace is identified with a 2-dimensional linear subspace
$
 I \subset e _{ 4 } \bfk Q ^{ 0 } e _{ 1 }
$,
which automatically is a 2-sided ideal of
$
 \bfk Q ^{ 0 }
$.
\begin{align}\label{eq:F0_relations}
 \ker \lb H^0(L_2) \otimes H^0(L_1) \otimes H^0(L_0)
  \to H^0(L_0 \otimes L_1 \otimes L_2) \rb.
\end{align}
Then the endomorphism algebra of the full strong exceptional collection is isomorphic to the path algebra of
$
 Q ^{ 0 }
$
by the relations \eqref{eq:F0_relations}.

On the other hand, consider the 3-dimensional AS-regular cubic $\bZ$-algebra
$
 A = A (W, L_0, L_1, L_2)
$
associated to the admissible quadruple
\begin{align}\label{eq:admissible_quadruple}
 (W, L_0, L_1, L_2)
\end{align}
studied in \cite{MR2836401}. Then the relation \pref{eq:F0_relations}
is identified with the cubic relations
\begin{align}
 \ker \lb A _{ 2 3 } \otimes A _{ 1 2 } \otimes A _{ 0 1 }
 \to
 A _{ 0 3 } \rb,
\end{align}
which implies the derived equivalences
\begin{align}
 D ^{ b } \qgr \bS \lb \cE \rb
 \simeq
 D ^{ b } \module \bfk Q ^{ 0 } / I
 \simeq
 D ^{ b } \qgr A (W, L_0, L_1, L_2).
\end{align}
Combined with the arguments below, this shows the 2nd assertion of \pref{th:main}.
%

The moduli stack of relations of the quiver
$
 Q ^{ 0 }
$
is defined as
\begin{align} \label{eq:Mreltz}
 \Mreltz \coloneqq [V_1 \otimes V_2 \otimes V_3 \otimes V_4
  / \GL(V_1) \times \GL(V_2) \times \GL(V_3) \times \GL(V_4)],
\end{align}
where $V_i = \vspan \lc a_i, b_i \rc$
for $i=1,2,3$ and $V_4$ is a 2-dimensional vector space.
It is studied in detail in \cite{1403.0713}.
The generic stabilizer of $\Mreltz$ is the kernel $K_0 \cong (\bGm)^3$
of the map
\begin{align}
 Z \lb \prod_{i=1}^4 \GL(V_i) \rb
  \cong (\bGm)^4 \to \bGm, \quad
 (\lambda_i)_{i=1}^4
   \mapsto \lambda_1 \lambda_2 \lambda_3 \lambda_4.
\end{align}
We write the rigidified stack as
$
 \Mrelz \coloneqq \Mreltz / \mathrm{B}K_0.
$
The corresponding GIT quotient
\begin{align}
 \Proj \bfk[V_1 \otimes V_2 \otimes V_3 \otimes V_4]^{
  \SL(V_1) \times \SL(V_2) \times \SL(V_3) \times \SL(V_4)}
   \simeq \bP(2,4,4,6)
\end{align}
can also be interpreted as the SLOCC moduli space of 4 qubits
(cf.~\cite{1402.3768} and references therein).

Consider next the case when
$
 \cU
$
is invertible,
$
 \deg \cU = 1
$,
the collection \eqref{eq:EC} for
$
 m = 1
$
is strong, and
$
 b - a = 1 = b ' - a '
$.
Define the line bundles
$
 (L_0, L_1, L_2)
$
as in \eqref{eq:definition_of_L0_L1_L2},
so that
\begin{align}
 \Hom(E_1, E_2) &\simeq H^0(L_0), &
 \Hom(E_2, E_3) &\simeq H^0(L_1), &
 \Hom(E_3, E_4) &\simeq H^0(L_2).
\end{align}
Consider the quiver
$
 Q ^{ 1 }
$
defined as in \pref{fg:quiver_of_ncF1}.
\begin{figure}
\ \vspace{10mm}
\begin{align*}
\begin{psmatrix}[rowsep=35mm,colsep=35mm,mnode=circle]
 1 & 2 & 3 & 4
\psset{nodesep=3pt,arrows=->}
\ncline[offset=5pt]{1,1}{1,2}
 \lput*{N}(0.5){a_1}
\ncline[offset=-5pt]{1,1}{1,2}
 \lput*{N}(0.5){a_2}
\ncline[offset=0pt]{1,2}{1,3}
 \lput*{N}(0.5){a_7}
\ncline[offset=5pt]{1,3}{1,4}
 \lput*{N}(0.5){a_4}
\ncline[offset=-5pt]{1,3}{1,4}
 \lput*{N}(0.5){a_5}
\nccurve[angleA=50,angleB=130,offset=3pt]{1,1}{1,3}
 \lput*{N}(0.5){a_3}
\nccurve[angleA=-50,angleB=-130,offset=3pt]{1,2}{1,4}
 \lput*{N}(0.5){a_6}
\end{psmatrix}
\end{align*}
\ \vspace{10mm}
\caption{The quiver $Q ^{ 1 }$}
\label{fg:quiver_of_ncF1}
\end{figure}
Fix isomorphisms
\begin{align}
 \vspan\lc a _{ 1 }, a _{ 2 } \rc \simeq H ^{ 0 } \lb L _{ 0 } \rb,
 \vspan\lc a _{ 7 } \rc \simeq H ^{ 0 } \lb L _{ 1 } \rb,
 \vspan\lc a _{ 4 }, a _{ 5 } \rc \simeq H ^{ 0 } \lb L _{ 2 } \rb,
\end{align}
and lifts
\begin{align}
 \vspan\lc a _{ 3 } \rc \hookrightarrow \Hom \lb E _{ 0 }, E _{ 2 } \rb,
 \vspan\lc a _{ 6 } \rc \hookrightarrow \Hom \lb E _{ 1 }, E _{ 3 } \rb
\end{align}
of isomorphisms
\begin{align}
 \vspan\lc a _{ 3 } \rc
 \simto
 \coker \lb \Hom \lb E _{ 0 }, E _{ 1 } \rb \otimes \Hom \lb E _{ 1 }, E _{ 2 } \rb
 \to
 \Hom \lb E _{ 0 }, E _{ 2 } \rb \rb
\end{align}
and
\begin{align}
 \vspan\lc a _{ 6 } \rc
 \simto
 \coker \lb \Hom \lb E _{ 1 }, E _{ 2 } \rb \otimes \Hom \lb E _{ 2 }, E _{ 3 } \rb
 \to
 \Hom \lb E _{ 1 }, E _{ 3 } \rb \rb,
\end{align}
respectively.
From these choices one obtains a surjective homomorphism of
$
 \bfk ^{ 4 }
$-algebras from
$
 \bfk Q ^{ 1 }
$
to the endomorphism algebra of the full strong exceptional collection. The kernel of the homomorphism is a 3-dimensional linear subspace of
$
 e _{ 4 } \bfk Q ^{ 1 } e _{ 1 }
$, which automatically is a 2-sided ideal of the path algebra.


The moduli stack of relations of the quiver \eqref{fg:quiver_of_ncF1} is defined as follows.
Let
\begin{itemize}
\item
$
 V _{ 1 } = \vspan \lc a _{ 3 } \rc
$

\item
$
 V _{ 2 } = \vspan \lc a _{ 1 }, a _{ 2 } \rc
$

\item
$
 V _{ 3 } = \vspan \lc a _{ 7 } \rc
$

\item
$
 V _{ 4 } = \vspan \lc a _{ 4 }, a _{ 5 } \rc
$

\item
$
 V _{ 5 } = \vspan \lc a _{ 6 } \rc
$
\end{itemize}
and $V_6$ be a 3-dimensional vector space. Consider the group
\begin{align}
 G
 =
 \lb \Hom \lb V _{ 1 }, V _{ 2 } \otimes V _{ 3 } \rb \times \Hom \lb V _{ 5 }, V _{ 3 } \otimes V _{ 4 } \rb \rb
 \rtimes
 \prod _{ 1 \le i \le 6 } \GL ( V _{ i } ),
\end{align}
where the semi-direct product is defined by the obvious left action
\begin{align}
 \lb g _{ i } \rb _{ i = 1, \dots, 6 } \colon
 \lb \varphi, \psi \rb
 \mapsto
 \lb g _{ 2 } \otimes g _{ 3 } \circ \varphi \circ g _{ 1 } ^{ - 1 },
 g _{ 3 } \otimes g _{ 4 } \circ \psi \circ g _{ 5 } ^{ - 1 } \rb.
\end{align}
Then
\begin{align}\label{eq:definition_of_M_rel_1}
 \Mrelto
 \coloneqq [
 \lb V _{ 1 } \otimes V _{ 4 } \oplus V _{ 2 } \otimes V _{ 3 } \otimes V _{ 4 } \oplus V _{ 2 } \otimes V _{ 5 } \rb
 \otimes
 V _{ 6 }
  / G ].
\end{align}
The generic stabilizer of $\Mrelto$ is the direct product $K_1$
of the kernels of the maps
\begin{align}
 Z(\GL(V_1) \times \GL(V_4)) \to \bGm,
  \ (\lambda_1, \lambda_4) \mapsto \lambda_1 \lambda_4,
\end{align}
\begin{align}
 Z(\GL(V_2) \times \GL(V_5)) \to \bGm,
  \ (\lambda_2, \lambda_5) \mapsto \lambda_2 \lambda_5,
\end{align}
and
\begin{align}
 Z(\GL(V_2) \times \GL(V_3) \times \GL(V_4)) \to \bGm,
  \ (\lambda_2, \lambda_3, \lambda_5) \mapsto \lambda_2 \lambda_3 \lambda_5.
\end{align}
The rigidified stack will be denoted by
\begin{align}
 \Mrelo \coloneqq \Mrelto / \mathrm{B} K_1.
\end{align}


%
%
%
\section{Moduli stack of nonsingular admissible quadruples}
 \label{sc:ell}

\subsection{The moduli stack $\Mell$} \label{sc:Mell}

A \emph{nonsingular admissible quadruple}
$(E, L_0, L_1, L_2)$
consists of a smooth proper curve $E$ of genus 1
and three line bundles $(L_0, L_1, L_2)$ on $E$ such that
$
 L _{ i } \not\simeq L _{ j }
$
for
$
 i \neq j
$.
An isomorphism of nonsingular admissible quadruples
$(E, L_0, L_1, L_2)$ and $(E', L_0', L_1', L_2')$
consists of an isomorphism $\varphi \colon E \to E'$
of
$
 \bfk
$-schemes and isomorphisms
$
 \varphi _i \colon L_i \to \varphi^* L _{ i } '
$
of line bundles on $E$
for $i=0,1,2$.

Let $\Mellt$ be the category fibered in groupoids over the category of
$
 \bfk
$-schemes, where an object of its fiber category over a scheme
$
 S \to \Spec \bfk
$
is a collection $(\cC, \cL_0, \cL_1, \cL_2)$ of a family $\cC \to S$ of elliptic curves over $S$
and three line bundles $(\cL_0, \cL_1, \cL_2)$ on $\cC$, and
a morphism from $(\cC, \cL_0, \cL_1, \cL_2)$
to $(\cC', \cL_0', \cL_1', \cL_2')$
consists of an isomorphism
$\varphi \colon \cC \to \cC'$
of elliptic curves over $S$
and isomorphisms
$
 \varphi_i \colon \cL_i \to \varphi^* \cL_i'
$
of line bundles for $i=0,1,2$.
It is an algebraic stack,
which is a gerbe banded by $(\bGm)^3$,
and we write its rigidification as
$
 \Mell \coloneqq \Mellt / \mathrm{B} (\bGm)^3.
$

$\Mell$ is decomposed into connected components
by the degrees of line bundles.
The connected components of $\Mell$
parametrizing the quadruples $(E, L_0, L_1, L_2)$ with
\begin{align}
 \deg (L_0, L_1, L_2) = (2,2,2) \mbox{ and } (2,1,2)
\end{align}
will be denoted by
$\Mellz$ and $\Mello$
respectively.

\subsection{Sheaf bimodules and nonsingular admissible quadruples}

For $i=0,1$,
let
$
 \Mshi^0 \subset \Mshi
$
be the open substack of sheaf bimodules whose support $W$ is a smooth divisor of bidegree $(2,2)$.
There is a natural morphism
$
 \Phi_i \colon \Mshi^0 \to \Melli
$
which sends $\cE$ to $(W, L_0, L_1, L_2)$, where
$
 (L_0, L_1, L_2)
$
are the line bundles defined in \eqref{eq:definition_of_L0_L1_L2}. 
Conversely, given a nonsingular admissible quadruple
$
 (E, L_0, L_1, L_2),
$
since the line bundles
$L_2$ and $L_0$ are assumed to be non-isomorphic,
they together define a closed immersion
$
 \iota \colon E \hookrightarrow \X \times \X
$,
and the push-forward
$
 \cE \coloneqq \iota_* \lb L _{ 0 } ^{ - 1 } \otimes L_1 \otimes L _{ 2 } \rb
$
gives a sheaf bimodule on $\X$.
This gives the inverse morphism $\Melli \to \Mshi^0$,
so that $\Mshi^0$ and $\Melli$ are isomorphic to each other.

%
%
%
\subsection{From relations to nonsingular admissible quadruples}
 \label{sc:relell}

For $i=0,1$,
there is a natural morphism
$
 \Psi_i \colon \Melli \to \Mreli
$
sending a nonsingular admissible quadruple $(E, L_0, L_1, L_2)$
to the relation
coming from the full strong exceptional collection
\eqref{eq:EC}
for
$
 m = 1
$
of the derived category
$
 D ^{ b } \qgr \bS \lb \Phi_i^{-1}(E, L_0, L_1, L_2) \rb
$.
The inverse birational map from $\Mreli$ to $\Melli$ is given by
considering the moduli space of representations
of the quiver with relations.

For a 2-sided ideal
$
 I
$
of the quiver
$
 Q ^{ 0 }
$,
let
$
 \rep \lb Q ^{ 0 }, I \rb
$
be the stack of finite dimensional right
$
 \bfk Q ^{ 0 } / I
$-modules. Note that there are isomorphisms of abelian groups as follows.
\begin{align}\label{eq:K0_and_dimension_vector}
 K _{ 0 } \lb \rep \lb Q ^{ 0 }, I \rb \rb
 \simeq
 \bZ ^{ 4 };
 \quad
 [ M ]
 \mapsto
 \lb \dim _{ \bfk } M e _{ i } \rb _{ i = 1 } ^{ 4 }
\end{align}
For
$
 M \in \rep \lb Q ^{ 0 }, I \rb
$,
the corresponding element
$
 \lb \dim _{ \bfk } M e _{ i } \rb _{ i = 1 } ^{ 4 } \in \bZ ^{ 4 }
$
is called the \emph{dimension vector} of $M$.

Consider the map
$
 \theta \colon K _{ 0 } \lb \rep \lb Q ^{ 0 }, I \rb \rb \to \bZ
$
which is identified with
$
 \bZ ^{ 4 } \xto[]{ \lb - 3, 1, 1, 1 \rb } \bZ
$
under the isomorphism \eqref{eq:K0_and_dimension_vector}. A module
$
 M \in \rep \lb Q ^{ 0 }, I \rb
$
is said to be
$
 \theta
$-(semi) stable if the inequality
$
 \theta ( N ) < ( \le ) \theta ( M ) = 0
$
holds for any submodule
$
 0 \neq N \subsetneq M
$.

Let us consider the moduli stack
$
 \cN = \cN _{ Q ^{ 0 } } \lb \bsone, \theta \rb
$
of
$
 \theta
$-stable representations of the quiver $Q^0$ (without relations) of dimension vector $\bsone=(1, 1, 1, 1)$.
It follows from \cite{King} that
$
 \cN
$
admits a projective fine moduli scheme
which is described as the following GIT quotient, as we explain next.

For
$
 j = 0, 1, 2, 3
$, set
$
 U _{ j } \coloneqq \bfk
$
and
$
 H ' \coloneqq
 \prod _{ j } \GL ( U _{ j } )
 =
 \bG _{ m } ^{ 4 }
$.
Consider the affine space
\begin{align}
 \prod _{ a \in \lb Q ^{ 0 } \rb _{ 1 } } \Hom \lb U _{ t ( a ) }, U _{ s ( a ) } \rb
 =
 V _{ 0 } \times V _{ 1 } \times V _{ 2 },
\end{align}
where
\begin{align}
 V _{ i }
 \coloneqq
 \prod _{ \substack{ a \in \lb Q ^{ 0 } \rb _{ 1 }\\ s ( a ) = i } } \Hom \lb U _{ i + 1 }, U _{ i } \rb
 \simeq
 \bfk ^{ 2 }.
\end{align}
The group
$
 H '
$
naturally acts on it, but the small diagonal
$
 \bG _{ m } \subset H '
$
acts trivially. Hence we consider the induced action of the quotient group
$
 H \coloneqq H ' / \bG _{ m }
$.
In what follows, the symbol
$
 \chi ( - )
$
denotes the group of characters of a group $-$.
\begin{lemma}
There exists a canonical isomorphism
$
 \chi ( H ' ) \simto K _{ 0 } \lb Q ^{ 0 }, I \rb ^{ \vee }
$
which extends to an isomorphism of short exact sequences of abelian groups as follows, where the rightmost vertical map sends
$
 \id _{ \bG _{ m } }
$
to
$
 1
$.

\begin{align}
 \xymatrix{
 0 \ar[r]
 &
 \chi \lb H \rb \ar[r] \ar[d] ^{ \simeq }
 &
 \chi ( H ' ) \ar[r] \ar[d] ^{ \simeq }
 &
 \chi \lb \bG _{ m } \rb 
 =
 \bZ \id _{ \bG _{ m } } \ar[r] \ar[d] ^{ \simeq }
 &
 0\\
 0 \ar[r]
 & \Ker \varphi \ar[r]
 &
 K _{ 0 } \lb Q ^{ 0 }, I \rb ^{ \vee } \ar[r] ^{ \varphi \coloneqq \la -, \bsone \ra }
 &
 \bZ \ar[r]
 &
 0}
\end{align}
\end{lemma}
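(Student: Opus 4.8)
The plan is to make both short exact sequences completely explicit as sequences of free abelian groups and then to construct the two outer vertical isomorphisms directly, deducing the middle comparison formally.

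First I would pin down the middle terms. Since every $U_j\cong\bfk$ is one-dimensional, $\GL(U_j)\cong\bG_m$ and its determinant character generates $\chi(\GL(U_j))\cong\bZ$; hence $\chi(H')=\bigoplus_{j=0}^{3}\bZ\cdot\det_j\cong\bZ^4$. On the other side, the dimension-vector isomorphism \eqref{eq:K0_and_dimension_vector} identifies $K_0(\rep(Q^0,I))$ with $\bZ^4$, so $K_0(Q^0,I)^\vee$ carries the dual basis $\epsilon_i$ characterized by $\epsilon_i([M])=\dim_\bfk Me_i$. The canonical map $\chi(H')\to K_0(Q^0,I)^\vee$ I would use is the King-type correspondence sending $\det_j$ to $\epsilon_i$, where $i$ is the vertex carrying $U_j$; equivalently it sends the character $\prod_j\det_j^{\theta_j}$ to the functional $[M]\mapsto\sum_j\theta_j\dim_\bfk Me_i$. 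Both groups are free of rank $4$ and the distinguished bases correspond bijectively, so this is visibly an isomorphism, which settles the first assertion.

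Next I would verify the right-hand square. The top row arises by applying the character functor, which is exact and contravariant on diagonalizable groups, to the split sequence $1\to\bG_m\to H'\to H\to1$; in particular $\chi(H')\to\chi(\bG_m)$ is restriction along the small diagonal $\lambda\mapsto(\lambda,\lambda,\lambda,\lambda)$, which sends $\prod_j\det_j^{\theta_j}$ to $(\sum_j\theta_j)\,\id_{\bG_m}$. On the bottom, $\varphi=\la-,\bsone\ra$ sends $\sum_j\theta_j\epsilon_i$ to $\sum_j\theta_j$, since $\la\epsilon_i,\bsone\ra$ is the $i$-th coordinate of $\bsone$, namely $1$. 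With the rightmost vertical arrow normalized by $\id_{\bG_m}\mapsto1$, the two ways around the right square therefore agree.

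Finally I would conclude formally: both rows are short exact---the bottom one by the definition of $\Ker\varphi$ together with the evident surjectivity of $\varphi$, the top one because the character functor is exact on diagonalizable groups---and the middle and right verticals are isomorphisms, so the five lemma furnishes the induced isomorphism $\chi(H)\simto\Ker\varphi$ on the left. I do not expect a genuine obstacle here; the only delicate point is bookkeeping, namely reconciling the labeling $\{0,1,2,3\}$ of the spaces $U_j$ with the labeling $\{1,2,3,4\}$ of the vertices underlying $K_0$, and confirming that the diagonal-restriction map and the pairing $\la-,\bsone\ra$ are literally the same ``sum of coordinates'' homomorphism under that identification.
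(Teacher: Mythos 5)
Your proposal is correct and takes essentially the same route as the paper: the canonical map you define ($\det_j \mapsto \epsilon_i$, i.e.\ $\theta \mapsto ([M] \mapsto \sum_i \theta_i \dim_\bfk M e_i)$) is exactly the paper's isomorphism, which it describes via $\theta(t(\lambda)) = \lambda^n$ before declaring the rest ``rather straightforward.'' Your write-up simply supplies the bookkeeping (commutativity of the right square, exactness of the rows, and the five lemma) that the paper omits.
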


\begin{proof}
We only describe the canonical isomorphism. The rest of the proof is rather straightforward.

Take
$
 \theta \in \chi ( H ' )
$
and
$
 M \in \rep \lb Q ^{ 0 }, I \rb
$.
Consider the map
$
 t \colon \bG _{ m } \to \prod _{ i } \GL ( M e _{ i } )
$
which sends
$
 \lambda \in \bG _{ m }
$
to
$
 \lb M e _{ i } \xto[]{ \lambda \cdot } M e _{ i } \rb _{ i }
$.
Then we let the integer $n$ which is associated to the class
$
 [ M ] \in K _{ 0 } \lb Q ^{ 0 }, I \rb
$
to be the one defined by the equality
$
 \theta ( t ( \lambda ) ) = \lambda ^{ n }
$.
\end{proof}

Since
$
 \la \theta, \bsone \ra = 0
$,
we thus obtain a character of the group
$
 H
$,
which will also be denoted by
$
 \theta
$
by an abuse of notation.
It, in turn, defines a linearization of the space
$
 V _{ 1 } \times V _{ 2 } \times V _{ 3 }
$.

By the results of \cite{King}, the moduli stack
$
 \cN
$
is isomorphic to the GIT quotient
\begin{align}
 \lb V _{ 1 } \times V _{ 2 } \times V _{ 3 } \rb // _{ \theta }
 \lb \bG _{ m } ^{ 4 } / \bG _{ m } \rb
 \simeq
 \bP(V_1) \times \bP(V_2) \times \bP(V_3).
\end{align}

A relation $ [ I ] \in \Mrelz$ determines the moduli space
$
 \cN _{ I } = \cN _{ \lb Q ^{ 0 }, I \rb } \lb \bsone, \theta \rb \subset \cN
$
of representations of the quiver with relations
$
 ( Q ^{ 0 }, I )
$,
which is a complete intersection of two divisors
of multidegree $(1,1,1)$ in
$\bP(V_1) \times \bP(V_2) \times \bP(V_3)$.
It is an elliptic curve
if $I$ is sufficiently general.

Since
$
 \cN _{ I }
$
is a fine moduli scheme, it comes with the universal representation on it.
In particular, there are tautological line bundles
$
 M _{ 1 }, \dots, M _{ 4 }
$
corresponding to the 4 vertices of the quiver
$
 Q ^{ 0 }
$, which are unique up to simultaneous tensoring by a line bundle on
$
 \cN _{ I }
$.
Now
\begin{align}
 (E, L_0, L_1, L_2)
  \coloneqq (\cN _{ I }, M_2, M_2^{^\vee} \otimes M_3, M_3^{\vee} \otimes M_4)
\end{align}
gives a nonsingular admissible quadruple.

This induces a morphism
$
 \Phi _{ 0 }
$
to $\Mellz$ from the open substack of $\Mrelz$ consisting of points
$
 [ I ]
$
for which
$
 \cN _{ I }
$
is nonsingular and
$
 L _{ i } \not\simeq L _{ j }
$
for
$
 i \neq j
$,
which is a birational inverse to the morphism $\Psi_0$.
This is shown by checking
$
 \Psi _{ 0 } \Phi _{ 0 } = \id
$,
since both
$
 \Mrelz
$
and
$
 \Mellz
$
are irreducible smooth stacks of the same dimension 3 with trivial generic
stabilizers.
To see
$
 \Psi _{ 0 } \Phi _{ 0 }
 \lb E, L _{ 0 }, L _{ 1 }, L _{ 2 } \rb = \lb E, L _{ 0 }, L _{ 1 }, L _{ 2 } \rb
$,
note that for
$
 [ I ] = \Phi _{ 0 } \lb E, L _{ 0 }, L _{ 1 }, L _{ 2 } \rb
$
there exists the canonical embedding
$
 E \hookrightarrow \cN _{ I }
$.
In fact, if one chooses bases for
$
 H ^{ 0 } \lb E, L _{ i } \rb
$
for
$
 i = 1, 2, 3
$,
then one obtains it as the classifying morphism by regarding the collection
$
 \lb \cO _{ E }, L _{ 0 }, L _{ 0 } \otimes L _{ 1 }, L _{ 0 } \otimes L _{ 1 } \otimes L _{ 2 } \rb
$
and the set of bases as a family of stable representations of
$
 \lb Q ^{ 0 }, I \rb
$
parametrized by
$
 E
$.
From the description of
$
 \cN _{ I }
$
as a complete intersection, one concludes that the closed immersion
is actually an isomorphism.
The rest follows from the definition of the notion of classifying morphism.

The parallel arguments as above work for the moduli space
$
 \cN
$
of stable representations of the quiver $Q^1$ of dimension vector
$
 \bsone
$
(we use the same symbols as above for the counterparts).
For each choice of a generic stability condition, $\cN$ is the toric variety
obtained as the quotient of the stable locus of $\bA^7$
by a 3-dimensional torus $\bGm^3$.
The weight matrix of the torus action is given by
\begin{align}
\bordermatrix{
 & a_1 & a_2 & a_3 & a_4 & a_5 & a_6 & a_7 \cr
 t_2 & 1 & 1 & 1 & 0 & 0 & -1 & -1 \cr
 t_3 & 0 & 0 & 1 & -1 & -1 & 0 & 1 \cr
 t_4 & 0 & 0 & 0 & 1 & 1 & 1 & 0
},
\end{align}
whose kernel is given by the image of
\ \vspace{-10mm}\\
\begin{align}
\bordermatrix{
  & & & & \cr
 a_1 & -1 & -1 & 0 & 0 \cr
 a_2 & 1 & 0 & 0 & 0 \cr
 a_3 & 0 & 1 & 0 & 0 \cr
 a_4 & 0 & 0 & 1 & 0 \cr
 a_5 & 0 & 0 & -1 & -1 \cr
 a_6 & 0 & 0 & 0 & 1 \cr
 a_7 & 0 & -1 & 0 & -1
}.
\end{align}
One-dimensional cones of the corresponding fan
are generated by the rows of this matrix.
The choice of the stability condition determines
which cones of higher dimensions are included in the fan,
and it turns out that
for an appropriate choice of a generic stability condition,
the toric variety is isomorphic to $\bP^2 \times \bP^2$ blown-up at one point.
Although
the anti-canonical bundle is not ample,
it is big and globally generated,
and yields a flopping contraction.
For a general relations $I$ of $Q^1$,
the moduli space $\cN _{ I }$ is the intersection
of three divisors
of multidegree $(-1,0,0,1) \in \Hom(\bGm^{ ( Q ^{ 1 } ) _0}, \bGm)$.
Since a divisor of multidegree $(-1,0,0,1)$
is the cubic root of the anti-canonical divisor,
it is base point free and big.
Taking into account that it does not contract a divisor, by the Bertini's theorem we can conclude that the moduli space $\cN _{ I }$ is a connected elliptic curve for a general relation $[ I ]$.

The contraction
$
 \cN \to \bP^2 \times \bP^2
$
corresponds to the inversion of the arrow $a_7$,
which in turn corresponds to passage to the Beilinson quiver
for $\bP^2$.

\begin{remark}
When
$
 \cU
$
is not invertible, it is expected that the moduli space
$
 \cN _{ I }
$
for the ideal
$
 I
$
corresponding to $\cU$ is isomorphic to the fine moduli space of point modules, which is isomorphic to the scheme
$
 \bP _{ W } \cU
$
by \cite[Theorem 4.5.1]{MR2958936}.
At least there should be the classifying morphism
\begin{align}
 \bP \coloneqq \bP _{ W } \cU \to \cN _{ I }; \quad p \mapsto \Hom \lb \cT, \cO _{ p } \rb,
\end{align}
where the object $\cT$ should be the direct sum of four line bundles, whose successive differences in turn should give rise to admissible quadruples in the sense of \cite{MR2836401}.
\end{remark}

\section{Noncommutative derived (special) McKay correspondence}
\label{sc:McKay}

The \emph{McKay correspondence} is a name given to the relation
between various invariants of quotient stacks by finite groups
and those of crepant resolutions (if any) of their coarse moduli.
In characteristic 0,
a version of McKay correspondence as an equivalence
of derived categories of coherent sheaves has been established
in various cases,
starting with \cite{MR1752785}
and then by \cite{MR1824990} and many more.
This, in turn, is a particular case of the DK-hypothesis by Bondal, Orlov, Bridgeland, and Kawamata (see, e.g., \cite{MR3838122} and references therein).

The simplest case of the derived McKay correspondence appears in the surface
$
 A _{ 1 }
$-singularity.
It globalizes to the derived equivalence between the Hirzebruch surface
$
 \Sigma _{ 2 }
$
and the stack-theoretic weighted projective plane
$
 \bP \lb 1, 1, 2 \rb \coloneqq \ld \lb \bA^3 \setminus \bszero \rb / \bGm \rd
$,
both of which are crepant `resolutions'
of the weighted projective plane
$
 \bfP \lb 1, 1, 2 \rb
$:
\begin{align}
 \xymatrix{
 \Sigma _{ 2 } \ar[dr] _(.3){ \mbox{crepant resolution } } & & \bP \lb 1, 1, 2 \rb \ar[dl] ^(.35){ \mbox{ \quad\quad \'etale in codimension $1$}  }\\
 & \bfP \lb 1, 1, 2 \rb &}
\end{align}

On the one hand,
a sheaf bimodule $\cE$ on $\X$ gives an abelian category
$
 \qgr \bS (\cE),
$
which is a noncommutative deformation of
$
 \coh \Sigma _{ 2 }.
$
On the other hand,
a 3-dimensional AS-regular algebra $S$ generated by three elements of degree
$
 1, 1, 2
$,
classified by Stephenson \cite{MR1397387},
gives another abelian category
$
 \qgr S,
$
which is a noncommutative deformation of
$
 \qgr \bfk [ x, y, z ]
 \simeq
 \coh \bP \lb 1, 1, 2 \rb.
$
It is natural to ask if there is a derived equivalence
\begin{align}\label{eq:nc_derived_mckay}
 D ^{ b } \qgr \bS \lb \cE \rb \simeq D ^{ b } \qgr S.
\end{align}
In the rest of this section,
we will discuss
an example of a pair of $\cE$ and $S$
satisfying \pref{eq:nc_derived_mckay}.

\begin{remark}
Although there are essentially 3-dimensional moduli of deformations
for the abelian category
$
 \coh \Sigma _{ 2 }
$
as we have seen so far,
families of algebras on the list \cite[Theorem 2.11]{MR1927433}
apparently can not cover all of them.
This suggests one to study
3-dimensional AS-regular \emph{$\bZ$-algebras}
generated by three elements of degree
$
 1, 1, 2,
$
instead of AS-regular algebras
generated by three elements of degree
$
 1, 1, 2.
$
Note that a generic
3-dimensional AS-regular $\bZ$-algebra
generated by three elements of degree
$
 1, 1, 2
$
is isomorphic to a 3-dimensional
cubic AS-regular $\bZ$-algebra;
the relation in degree 2
turns the degree 2 generator
into a linear combination of quadratic monomials of degree 1 generators,
and one is left with cubic relations between degree 1 generators.
\end{remark}

For a 3-dimensional AS-regular algebra $S$
generated by three elements of degree
$
 1, 1, 2,
$
we let
$
 \pi \colon \grmod S \to \qgr S
$
be the quotient functor, and
write
$
 \cO(i) \coloneqq \pi \lb S ( i ) \rb
$
and
$
 \cO \coloneqq \cO(0).
$
The sequence
\begin{align}\label{eq:fsec_of_ncP112}
 \lb \cO, \cO ( 1 ), \cO ( 2 ), \cO ( 3 ) \rb
\end{align}
of objects of $\qgr S$
is a full strong exceptional collection in
$
 D ^{ b } \qgr S
$
(see, e.g., \cite[Corollary 18]{MR2641200}
and references therein).
We consider the algebra
\begin{align} \label{eq:q-Weyl}
 S = \bfk \la x, y, z \ra / \lb y x - x y, z x - \lambda x z, z y - y z \rb
\end{align}
appearing as the first item in \cite[Theorem 2.11]{MR1927433}.
The total morphism algebra of the collection
\eqref{eq:fsec_of_ncP112}
in this case
is isomorphic to the quotient of the path algebra
of the quiver in \pref{fg:quiver_of_nc_Sigma2}
by the relations
\begin{align}\label{eq:relations_stephenson_1}
 I = \lb 
 b _{ 2 } a _{ 1 } - a _{ 2 } b _{ 1 },
 b _{ 3 } a _{ 2 } - a _{ 3 } b _{ 2 },
 c _{ 2 } a _{ 1 } - \lambda a _{ 3 } c _{ 1 },
 c _{ 2 } b _{ 1 } - b _{ 3 } c _{ 1 } \rb.
\end{align}
\begin{figure}
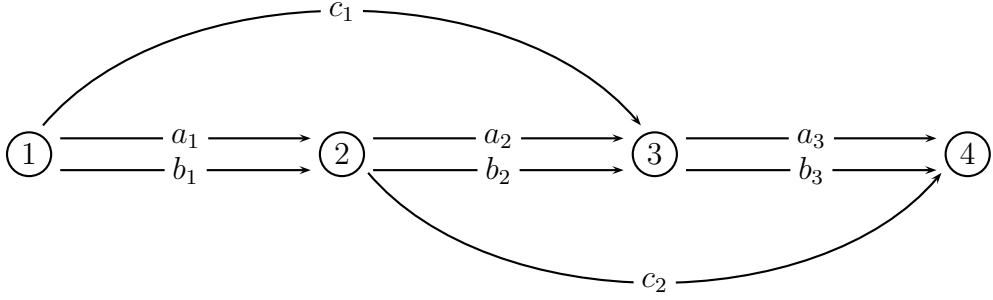

\ \vspace{10mm}
\begin{align*}
\begin{psmatrix}[rowsep=35mm,colsep=35mm,mnode=circle]
 1 & 2 & 3 & 4
\psset{nodesep=3pt,arrows=->}
\ncline[offset=6pt]{1,1}{1,2}
 \lput*{N}(0.5){a_1}
\ncline[offset=-6pt]{1,1}{1,2}
 \lput*{N}(0.5){b_1}
\ncline[offset=6pt]{1,2}{1,3}
 \lput*{N}(0.5){a_2}
\ncline[offset=-6pt]{1,2}{1,3}
 \lput*{N}(0.5){b_2}
\ncline[offset=6pt]{1,3}{1,4}
 \lput*{N}(0.5){a_3}
\ncline[offset=-6pt]{1,3}{1,4}
 \lput*{N}(0.5){b_3}
 \nccurve[angleA=50,angleB=130,offset=3pt]{1,1}{1,3}
 \lput*{N}(0.5){c_1}
\nccurve[angleA=-50,angleB=-130,offset=3pt]{1,2}{1,4}
 \lput*{N}(0.5){c_2}
\end{psmatrix}
\end{align*}
\ \\[12mm]
\caption{The quiver of type $\Sigma _{ 2 }$}
\label{fg:quiver_of_nc_Sigma2}
\end{figure}

Consider the sheaf bimodule
\begin{align}
 \cE
 =
 \cO _{ \Delta } ( 2 )
 \oplus
 \cO _{ \Gamma _{ \lambda } },
\end{align}
where
$
 \Gamma _{ \lambda }
$
is the graph of the automorphism
$
 \lambda \colon \X \to \X
$
given by
$
 \lb x : y \rb \mapsto \lb \lambda x : y \rb.
$
As explained in \pref{eq:E_and_F},
the quiver with relations
describing the total morphism algebra of the collection \eqref{eq:EC}
can be computed using
\begin{align}
 \lb F _{ 1 }, F _{ 2 }, F _{ 3 }, F _{ 4 } \rb
 =
 \lb \cO _{ \X } ( - 1 ), \cO _{ \X }, \cO _{ \X } ( 1 ) \oplus \cO _{ \X } ( - 1 ),
 \cO _{ \X } ( 2 ) \oplus \cO _{ \X } \rb.
\end{align}
For example, the arrows
$
 c _{ 1 }
$
and
$
 c _{ 2 }
$
correspond to the unique (up to scalar) morphisms
$
 \Hom _{ \X } \lb \cO _{ \X } ( - 1 ), \lambda _{ * } \cO _{ \X } ( - 1 )\rb
$
and
$
 \Hom _{ \X } \lb \cO _{ \X }, \lambda _{ * } \cO _{ \X } \rb
$,
which will be denoted by
$
 z _{ 1 }
$
and
$
 z _{ 2 }
$
respectively.

Choose a homogeneous coordinate
$
 x, y
$
of
$
 \X
$,
so that
$
 \Hom _{ \X } \lb  \cO _{ \X } ( - 1 ), \cO _{ \X } \rb = \vspan \lc x, y \rc
$.
A moment's reflection will convinces one that the dual gluing functor
$
 \phi ^{ ! }
$
sends
$
 x, y
$
to the morphisms
\begin{align}
 \begin{pmatrix}
 x & 0\\
 0 & \lambda ^{ - 1 } x
 \end{pmatrix},
 \begin{pmatrix}
 y & 0\\
 0 & y
 \end{pmatrix}
 \colon
 \cO _{ \X } ( 1 ) \oplus \cO _{ \X } ( - 1 )
 \to
 \cO _{ \X } ( 2 ) \oplus \cO _{ \X },
\end{align}
respectively.

Putting together, the endomorphism algebra of the full strong exceptional collection can be described as in
\pref{fg:endomorphisms_of_nc_Sigma2}.
\begin{figure}
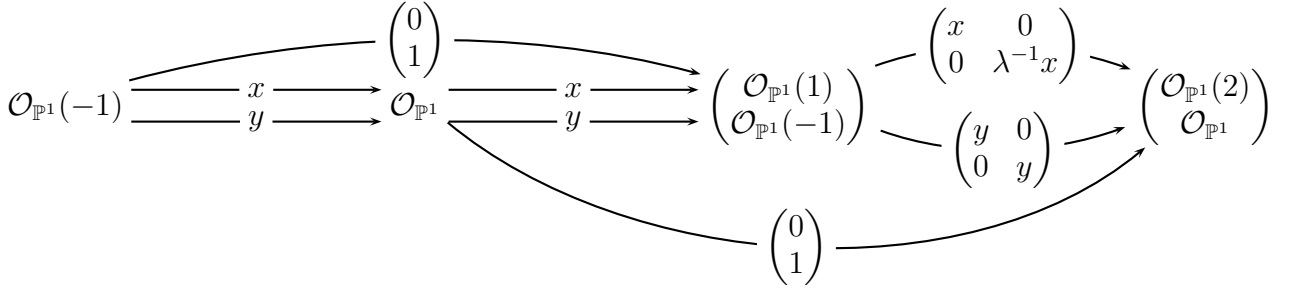

\ \vspace{10mm}
\begin{align*}
\begin{psmatrix}[rowsep=35mm,colsep=35mm,]
 \cO _{ \X } ( - 1 ) & \cO _{ \X } &
 \begin{pmatrix}
 \cO _{ \X } ( 1 )\\
 \cO _{ \X } ( - 1 )
 \end{pmatrix}
 &
 \begin{pmatrix}
 \cO _{ \X } ( 2 )\\
 \cO _{ \X }
 \end{pmatrix}
\psset{nodesep=3pt,arrows=->}
\ncline[offset=6pt]{1,1}{1,2}
 \lput*{N}(0.5){x}
\ncline[offset=-6pt]{1,1}{1,2}
 \lput*{N}(0.5){y}
\ncline[offset=6pt]{1,2}{1,3}
 \lput*{N}(0.5){x}
\ncline[offset=-6pt]{1,2}{1,3}
 \lput*{N}(0.5){y}
 \nccurve[angleA=20,angleB=160,offset=3pt]{1,3}{1,4}
 \lput*{N}(0.5){\begin{pmatrix}
 x & 0\\
 0 & \lambda ^{ - 1 } x
 \end{pmatrix}}
 \nccurve[angleA=-20,angleB=-160,offset=3pt]{1,3}{1,4}
 \lput*{N}(0.5){\begin{pmatrix}
 y & 0\\
 0 & y
 \end{pmatrix}}
 \nccurve[angleA=15,angleB=165,offset=3pt]{1,1}{1,3}
 \lput*{N}(0.5){
 \begin{pmatrix}
 0\\
 1
 \end{pmatrix}}
\nccurve[angleA=-40,angleB=-140,offset=3pt]{1,2}{1,4}
 \lput*{N}(0.5){
 \begin{pmatrix}
 0\\
 1
 \end{pmatrix}}
\end{psmatrix}
\end{align*}
\ \\[14mm]
\caption{Explicit description of the endomorphisms}
\label{fg:endomorphisms_of_nc_Sigma2}
\end{figure}

The relations \eqref{eq:relations_stephenson_1} can be readily seen from this. For example, the relation
$
 c _{ 2 } a _{ 1 } - \lambda a _{ 3 } c _{ 1 }
$
comes from the equality
\begin{align}
 \begin{pmatrix}
 0\\
 1
 \end{pmatrix}
 \circ x
 =
 \lambda
 \begin{pmatrix}
 x & 0\\
 0 & \lambda ^{ - 1 } x
 \end{pmatrix}
 \circ
 \begin{pmatrix}
 0\\
 1
 \end{pmatrix}
\end{align}
among the paths from
$
 \cO _{ \X } ( - 1 )
$
to
$
 \cO _{ \X } ( 2 ) \oplus \cO _{ \X }
$.

More generally, for
$
 d \ge 3
$,
there exists a fully faithful functor
\begin{align}\label{eq:embedding_d}
 D ^{ b } \coh \Sigma _{ d } \hookrightarrow D ^{ b } \coh \bP ( 1, 1, d )
\end{align}
whose essential image is given by
\begin{align}
 \la \cO, \cO(1), \cO(d), \cO(d+1) \ra
  \subset D^b \coh \bP(1,1,d)
  = \la \cO, \cO(1), \cO(2), \ldots, \cO(d+1) \ra.
\end{align}
This is a global version of the derived special McKay correspondence
for the cyclic quotient singularity
$
 \frac{1}{d} \lb 1, 1 \rb
$.
Similarly,
the category $D^b \qgr S$
for any 3-dimensional AS-regular algebra $S$ generated by elements of degrees
$
 1, 1, d
$
has an admissible subcategory
$
 \la \cO, \cO(1), \cO(d), \cO(d+1) \ra
$
admitting a semiorthogonal decomposition
into two copies of $D^b \coh \X$.
It is an interesting problem
to see if the integral kernel
for the dual gluing functor
is given by a locally free sheaf bimodule $\cE$,
so that one obtains a fully faithful functor
\begin{align}
 D^b \qgr \bS(\cE) \hookrightarrow D^b \qgr S,
\end{align}
which is a non-commutative generalization of \pref{eq:embedding_d}.
A calculation parallel to the one given above
shows that this is the case for \pref{eq:q-Weyl}.


\bibliographystyle{amsalpha}
\bibliography{bibs}

\end{document}